\theoremstyle{definition}
\newtheorem{thm}{Theorem}
\newtheorem{thm-defn}[thm]{Theorem-definition}
\newtheorem{lemma}[thm]{Lemma}
\newtheorem{proposition}[thm]{Proposition}
\numberwithin{thm}{section} \theoremstyle{definition}
\newtheorem{defn}[thm]{Definition}
\newtheorem{corollary}[thm]{Corollary}
\newcommand\Hecke{\mathscr{H}}
\begin{document}
\title{$W\!$-graphs for Hecke algebras with unequal parameters (II)}
\author{Yunchuan Yin}
\address{Department of Mathematics \\
Shanghai University of Finance and Economics\\
P.R.China}  \subjclass[2000]{Primary
and secondary 20C08, 20F55} \keywords{ Coxeter group, Hecke algebra, $W\!$-graph, Kazhdan-Lusztig basis,
Kazhdan-Lusztig polynomial}


\begin{abstract}
This paper is the continuation of the work in~\cite{Yin}. In that paper we generalized the definition
of $W$-graph ideal in the weighted Coxeter groups, and showed how to construct a $W$-graph from
a given $W$-graph ideal in the case of unequal parameters.

In this paper we study the full $W$-graphs for a given $W$-graph ideal. We show that there exist a pair of dual modules associated with
a given $W$-graph ideal, they are connected by a duality map. 
For each of the dual modules, the associated full $W$-graphs can be constructed.
Our construction closely parallels that of Kazhdan and Lusztig~\cite{KL, Lusztig1, Lusztig2}, which can be regarded as the special case $J=\emptyset$. It also
generalizes the work of Couillens~\cite{C},  Deodhar~\cite{Deodhar, Deodhar2}, and Douglass \cite{MD}, corresponding to
the parabolic cases.
\end{abstract}
\maketitle
\section*{Introduction}
Let $(W, S)$ be a Coxeter system and $\Hecke(W)$ its Hecke algebra over $\mathbb{Z}[q, q^{-1}]$, the ring of Laurent polynomials in the indeterminate $q$. This is now called \emph{the one
parameter case (or the equal parameter case)}. In~\cite{Howlett} Howlett and Nguyen introduced the
concept of \emph{a $W$-graph ideal} in $(W,\leqslant_L)$ with respect to a subset $J$ of $S$, where $\leqslant_L$ is the left weak Bruhat order on $W$. They showed that a $W$-graph can be constructed from a given $W$-ideal, and a
Kazhdan-Lusztig like algorithm was obtained.

In particular, $W$ itself is a $W$-graph ideal with respect to $\emptyset$, and the $W$-graph obtained is the Kazhdan-Lusztig $W$-graph for the regular representation of
$\Hecke(W)$ (as defined in ~\cite{KL}). More generally, it was shown that if $J$ is an arbitrary
subset of $S$ then $D_J$, the set of distinguished left coset representatives of $W_J$ in
$W$, is a $W$-graph ideal with respect to $J$ and also with respect to $\emptyset$, and Deodhar's
parabolic analogues of the Kazhdan-Lusztig construction are recovered.

In \cite{Yin} we generalized the definition
of $W$-graph ideal in the Coxeter groups with a weight function $L$, we showed that the $W$-graph can also be constructed from a given $W$-graph ideal.

In this
paper we continue the work in \cite{Yin}, it grows out of our attempt to understand the "full $W$-graphs" that include $W$-graphs and their dual ones.
The duality has appeared in some literatures, for instance, in the original paper~\cite{KL} Kazhdan and Lusztig
implicitly provided a pair of dual bases $C$ and $C'$ for the Hecke algebras,
Deodhar introduced a pair of dual modules $M^J$ and $\widetilde{M}^J$ in parabolic cases( see \cite{Deodhar, Deodhar2}).

The paper is organised as follows. In Section 1 we present some basic concepts and facts concerning the weighted Coxeter groups,
Hecke algebras and
$W$-graphs. In Section 2, we recall the concept of $W$-graph ideal. 
In Section 3, we show that there exist a pair of dual modules $M(\textbf{E}_J, L)$ and $\widetilde{M}(\textbf{E}_J, L)$
that are associated with a given $W$-graph ideal $\textbf{E}_J$, they are connected
by a duality map, this in turn can be used for the construction of the dual bases of the $W$-graphs. 
This construction closely parallels the work of Deodhar \cite{Deodhar,Deodhar2}, Douglass~\cite{MD},
where they focused primarily upon the parabolic cases.

 In Section 4 we prove in general the construction of another pair of dual $W$-graph bases. This part is motivated by Lusztig's work \cite[Ch. 10]{Lusztig2},
 the construction is obtained by using the bases of $\Hecke$-modules $Hom_A(M, A)$ and $Hom_A(\widetilde{M}, A)$.

In Section 5,  in the case $W$ is finite we prove an inversion formula that relates the two versions of the relative Kazhdan-Lusztig polynomials,
.
  In the last section we give some examples and remarks.
\section{preliminaries}\label{Section1}
Let $W$ be a Coxeter group, with generating set $S$. In this section, we briefly recall some basic concepts concerning the general multi-parameter framework of Lusztig~\cite{Lusztig1, Lusztig2}, which introduces a weight function into Coxeter groups and their associated Hecke algebras on which
all the subsequent constructions depend.

We denote by $\ell: W\rightarrow \mathbb{N}=\{ 0, 1,2,\cdots \}$ the length function on $W$ with respect to $S$. Let $\leqslant$ denote the Bruhat order on $W$.

Let $\Gamma$ be the totally ordered abelian group which will be denoted additively, the order on $\Gamma$ will be denoted by $\leqslant$. Let $\{L(s)\mid s\in S\}\subseteq \Gamma$ be a collection of elements such that $L(s)=L(t)$ whenever $s, t\in S$ are conjugate in $W$. This gives rise to a weight function
\[
L: W\longrightarrow \Gamma
\]
in the sense of Lusztig~\cite{Lusztig1, Lusztig2}; we have $L(w)=L(s_1)+L(s_2)+\cdots +L(s_k)$ where
$w=s_1s_2\cdots s_k (s_i \in S)$ is a reduced expression for $w\in W$. We assume throughout that
\[
L(s)\geqslant 0
\]
for all $s\in S$. (If $\Gamma= \mathbb{Z}$ and $L(s)=1$ for all $s\in S$, then this is the original "equal parameter" setting of ~\cite{KL}).

Let $R\subseteq\mathbb{C}$ be a subring and $A=R[\Gamma]$ be a free $R$-module with basis $\{q^{\gamma}\mid \gamma\in \Gamma\}$ where $q$ is an indeterminant.(The basic constructions in this section are independent
of the choice of $R$ and so we could just take $R = \mathbb{Z})$.  The flexibility of $R$ will be useful once we consider representations of $W$). There is a well-defined ring structure on $A$ such that $q^{\gamma}q^{\gamma'}=q^{\gamma+\gamma'}$ for all $\gamma, \gamma'\in \Gamma$. We denote $1=q^0\in A$. If $a\in A$ we denote by $a_\gamma$ the coefficient of $a$ on $q^{\gamma}$ so that $a=\sum_{\gamma \in \Gamma}a_{\gamma} q^{\gamma}$. If $a\neq 0$ we define the degree of $a$ as the element of $\Gamma$ equal to

\[
\deg(a)=max \{\gamma \mid a_\gamma \neq 0\}
\]
by convention(see~\cite{Bonnafe2}), we set $\deg 0= -\infty$. So $\deg: A\rightarrow \Gamma \cup \{-\infty\}$ satisfies $\deg(ab)=\deg(a)+\deg(b)$.

Let $\Hecke=\Hecke(W, S, L)$ be the generic Hecke algebra corresponding to $(W, S)$ with parameters $\{q^{L(s)}\mid s\in S\}$.  Thus $\Hecke$ has an $A$-basis $\{T_w\mid w\in W  \}$ and the multiplication is given by the rules
\begin{equation}\label{eq:1}
T_sT_w=\begin{cases}
T_{sw}&\text{if $\ell(sw)>\ell(w)$}\\[3 pt]
T_{sw}+(q^{L(s)}-q^{-L(s)} )T_w&\text{if $\ell(sw)<\ell(w)$,}
\end{cases}
\end{equation}
Let $\Gamma_{\geqslant \gamma_0}=\{\gamma\in \Gamma\mid \gamma\geqslant \gamma_0 \}$ and denote by $A_{\geqslant \gamma_0}$ ( or $R[\Gamma_{\geqslant \gamma_0}]$ ) the set of all $R$-linear combinations of terms $q^\gamma$ where $\gamma\geqslant \gamma_0$. The notations $A_{\gamma > \gamma_0}, A_{\gamma \leqslant \gamma_0}, A_{\gamma < \gamma_0}$ have a similar meaning.

 We denote by $A\mapsto A, a\mapsto \overline{a}$ the automorphism of $A$ induced by the automorphism of $\Gamma$ sending $\gamma$ to $-\gamma$ for any $\gamma\in \Gamma$. This extends to a ring involution $\Hecke \mapsto \Hecke, h\mapsto \overline{h}$, where
 \begin{equation*}
 \overline{\sum_{w\in W}a_w T_w}=\sum_{w\in W}\overline{a_w} T_{w^{-1}}^{-1},  \!\!\!\text{ $a_w\in A$ for all $w\in W$},
 \end{equation*}
 and
 \[
 \overline{T_s}=T_s^{-1}=T_s+(q^{-L(s)}-q^{L(s)}) \text{ for all $s\in S$}.
 \]
 \subsection*{ Definition of $W$-graph}
\begin{defn}
 (for equal parameter case see~\cite{KL} ; for general $L$ see~\cite{Geck2} ). A $W$-
graph for $\Hecke$ consists of the following data:
\begin{itemize}
\item[(a)]
 a base set $\Lambda$ together with a map
$I$ which assigns to each $x \in \Lambda$ a subset $I(x) \subseteq S$;
\item[(b)]
 for each $s \in  S$ with $L(s) > 0$, a collection of elements
\begin{equation*}
\{ \mu^s_{x, y} \mid  \text{$ x,y \in \Lambda$  such that $s\in  I(x), s\notin I(y)$} \};
\end{equation*}
\item[(c)]
 for each $s \in S$ with $L(s) = 0$ a bijection $\Lambda\rightarrow \Lambda, x \rightarrow s.x$.
These data are subject to the following requirements.
First we require that, for any $x,  y \in \Lambda$ and $s \in S$ where $\mu^s_{x,y}$
 is defined, we have
 \[
\text{$q^{L(s)}\mu^s_{x, y}\in R[\Gamma_{>0}]$} \text{ and $\overline{\mu^s_{x, y}}=\mu^s_{x, y}$}.
\]
\end{itemize}
Furthermore, let $[\Lambda]_A$ be a free $A$-module with basis $\{b_y\mid y\in \Lambda  \}$. For $s \in S$, define
an $A$-linear map
\begin{equation}
\rho_s(b_y)=\begin{cases}
b_{s.y}&\text{if $ L(s) = 0$};\\[3 pt]
-q^{-L(y)}b_y&\text{if $L(s) > 0, s \in I(y)$;}\\[3 pt]
q^{L(y)}b_y + \sum_{x\in \Lambda; s\in I(x)}\mu_{x, y}^s b_x  &\text{if $L(s) > 0, s \notin I(y)$.}
\end{cases}
\end{equation}
Then we require that the assignment $T_s \mapsto  \rho_s$ defines a representation of $\Hecke$ .
\end{defn}
\section{$W$-graph ideals}
For each $J\subseteq S$, let $\hat{J}=S\backslash J$(the
complement of $J$) and define $W_J=\langle J\rangle$, the
corresponding parabolic subgroup of~$W$.  Let $\Hecke_J$ be the
Hecke algebra associated with~$W_J$. As is well known, $\Hecke_J$
can be identified with a subalgebra of~$\Hecke$.

Let $D_J=\{\,w\in W\mid \ell(ws)>\ell(w)\text{ for all $s\in
J$}\,\}$, the set of minimal coset representatives of~$W/W_J$. The
following lemma is well known.

\begin{lemma}\cite[Lemma 2.1(iii)]{Deodhar}(modified)   \label{lemma:1}Let $J\subseteq S$ and
$s\in S$, and define
\begin{align*}
D_{J,s}^-&=\{\,w\in D_J\mid \text{$\ell(sw)<\ell(w)$}\,\},\\
D_{J,s}^+&=\{\,w\in D_J\mid \text{$\ell(sw)>\ell(w)$ and
   $sw\in D_J$}\,\},\\
D_{J,s}^0&=\{\,w\in D_J\mid \text{$\ell(sw)>\ell(w)$ and
   $sw\notin D_J$}\,\},
\end{align*}
so that $D_J$ is the disjoint union $D_{J,s}^-\cup D_{J,s}^+\cup
D_{J,s}^0$. Then $sD_{J,s}^+=D_{J,s}^-$, and if\/ $w\in D_{J,s}^0$
then $sw=wt$ for some $t\in J$.
\end{lemma}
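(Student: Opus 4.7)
The plan is to verify the three assertions of the lemma in sequence, relying only on elementary length arguments and the Exchange Condition for $(W, S)$.

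For the disjoint-union decomposition, the argument is purely set-theoretic: since $\ell(sw) = \ell(w) \pm 1$ for every $w \in W$ and $s \in S$, each $w \in D_J$ satisfies either $\ell(sw) < \ell(w)$ (placing it in $D_{J,s}^-$) or $\ell(sw) > \ell(w)$, and in the latter case the dichotomy $sw \in D_J$ versus $sw \notin D_J$ sorts $w$ unambiguously into $D_{J,s}^+$ or $D_{J,s}^0$.

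For the identity $sD_{J,s}^+ = D_{J,s}^-$, one direction is immediate: if $w \in D_{J,s}^+$ then $sw \in D_J$ and $\ell(s(sw)) = \ell(w) < \ell(sw)$, so $sw \in D_{J,s}^-$. For the converse I would take $w \in D_{J,s}^-$ and argue by contradiction that $sw \in D_J$: if some $t \in J$ gave $\ell(swt) < \ell(sw)$, then $\ell(swt) = \ell(w) - 2$; but $w \in D_J$ forces $\ell(wt) = \ell(w) + 1$, whereas $\ell(s \cdot wt)$ must differ from $\ell(wt)$ by exactly one, yielding the required contradiction. Then $s \cdot (sw) = w$ places $sw$ into $D_{J,s}^+$.

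For the third statement, let $w \in D_{J,s}^0$ and set $u = sw$. Since $u \notin D_J$, there exists $t \in J$ with $\ell(ut) < \ell(u)$. Fix a reduced expression $w = s_1 s_2 \cdots s_k$; since $\ell(sw) > \ell(w)$, the word $u = s s_1 s_2 \cdots s_k$ is also reduced. The Exchange Condition, applied to this reduced expression and the generator $t$, produces $ut$ by deleting exactly one letter: either the leading $s$, in which case $ut = w$ and hence $sw = wt$ as claimed, or some $s_i$, in which case $wt = s_1 \cdots \widehat{s_i} \cdots s_k$ has length at most $k - 1$, contradicting $\ell(wt) = k + 1$ (which holds because $w \in D_J$). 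So only the first case can occur, completing the proof. No serious obstacle is anticipated; the main subtlety is the bookkeeping of length parities in parts (ii) and (iii), where one must carefully use $w \in D_J$ to pin down $\ell(wt)$ and rule out the ``wrong'' deletion in the Exchange Condition.
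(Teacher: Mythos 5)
Your proof is correct. The paper itself offers no argument here --- it records the lemma as well known, citing Deodhar --- and your treatment (the length dichotomy $\ell(sw)=\ell(w)\pm 1$ for the partition, the length-counting contradiction showing $sw\in D_J$ for $w\in D_{J,s}^-$, and the Exchange Condition applied to the reduced word $s s_1\cdots s_k$ to force $sw=wt$ with $t\in J$) is exactly the standard argument underlying that citation; the key point of ruling out deletion of an $s_i$ via $\ell(wt)=\ell(w)+1$ is handled correctly.
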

In this section we shall recall \cite[Section 5]{Howlett}, with some modification.

Let $\leqslant_L$ denote the left weak (Bruhat)order on $W$. We say $x\leqslant_L y$ if and only if
$y=zx$ for some $z\in W$ such that $\ell(y)=\ell(z)+\ell(x)$. We also say that $x$ is a \emph{suffix} of $y$.
The following property of the Bruhat order is useful (see \cite[Corollary 2.5]{Lusztig2}, for example).
\begin{lemma}
Let $y, z\in W$ and let $s\in S$.
\begin{itemize}
\item[(i)]
 Assume that $sz<z$, then $y\leqslant z\Longleftrightarrow sy\leqslant z$.
\item[(ii)]
 Assume that $y<sy$, then $y\leqslant z\Longleftrightarrow y\leqslant sz$.
\end{itemize}
\end{lemma}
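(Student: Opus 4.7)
The plan is to derive both assertions from the subword characterization of the Bruhat order, namely that $y\leqslant z$ iff some reduced expression for $y$ occurs as a subword of some (equivalently, every) reduced expression for $z$. Since we are allowed to quote this and the Exchange Condition, the task reduces to a careful bookkeeping of two cases, determined by whether $s$ multiplies on the left to increase or to decrease length.

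For part (i), assume $sz<z$. Choose a reduced expression $z=s\,s_{i_1}\cdots s_{i_k}$, so that $sz=s_{i_1}\cdots s_{i_k}$ is reduced. A subexpression realizing $y$ either omits the leading $s$---in which case $y\leqslant sz$---or uses it, in which case the remaining letters form a reduced subword producing some $y'$ with $y=sy'$ and $\ell(y)=\ell(y')+1$; that is, $sy<y$ and $sy=y'\leqslant sz$. Hence
\[
y\leqslant z \iff y\leqslant sz \ \text{ or }\ \bigl(sy<y \text{ and } sy\leqslant sz\bigr).
\]
Applying exactly the same analysis with $sy$ in place of $y$ yields the corresponding disjunction for $sy\leqslant z$. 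A short case split on whether $sy<y$ or $y<sy$ then shows the two disjunctions are identical; when $sy<y$ the ``or'' in the $y$-version collapses to $sy\leqslant sz$, and when $y<sy$ it collapses to $y\leqslant sz$, matching the $sy$-version in each case.

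For part (ii), assume $y<sy$. If $sz>z$, then $y\leqslant z$ immediately gives $y\leqslant z\leqslant sz$; conversely, fix a reduced expression $sz=s\cdot(\text{reduced expr.\ for }z)$ and note that any subexpression realizing $y$ cannot use the leading $s$ (using it would force $sy<y$, contrary to hypothesis), so $y\leqslant sz$ implies $y\leqslant z$. If instead $sz<z$, the implication $y\leqslant sz\Rightarrow y\leqslant z$ is trivial from $sz\leqslant z$, while the reverse follows by feeding the case analysis of (i) with $y<sy$: the only surviving branch of the disjunction for $y\leqslant z$ is precisely $y\leqslant sz$.

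The only step requiring real attention is the case analysis in (i), where one must keep straight which branch of the subword dichotomy survives under each parity assumption on $\ell(sy)-\ell(y)$; everything else is immediate from the subword criterion and the left weak order setup recalled just before the lemma.
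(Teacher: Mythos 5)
Your proof is correct, but note that the paper itself contains no argument to compare it with: the lemma is simply quoted from Lusztig's book (it is the standard ``lifting property'' of the Bruhat order, cited there as \cite[Corollary 2.5]{Lusztig2}). Your subword argument is the usual self-contained proof and it goes through: since $sz<z$, you may fix a reduced expression $z=s\,s_{i_1}\cdots s_{i_k}$, and the dichotomy on whether a reduced subword expressing $y$ uses the leading letter gives $y\leqslant z\iff\bigl(y\leqslant sz\ \text{or}\ (sy<y\ \text{and}\ sy\leqslant sz)\bigr)$; running the same dichotomy for $sy$ and splitting on the sign of $\ell(sy)-\ell(y)$ does collapse both disjunctions to the same condition ($sy\leqslant sz$ when $sy<y$, and $y\leqslant sz$ when $y<sy$), which proves (i), and (ii) follows as you describe. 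Two points you leave implicit and should record for completeness: first, the converse half of your displayed equivalence (from the disjunction back to $y\leqslant z$) is not purely the subword dichotomy --- the branch $sy<y$, $sy\leqslant sz$ requires prepending $s$ to a reduced subword of $s_{i_1}\cdots s_{i_k}$ expressing $sy$ to obtain a reduced subword of $s\,s_{i_1}\cdots s_{i_k}$ expressing $y$ (lengths add because $\ell(y)=\ell(sy)+1$), while the branch $y\leqslant sz$ uses transitivity via $sz\leqslant z$; second, you need the strong form of the subword property (every reduced expression of $z$ admits such a subword), since you work with one specific expression beginning with $s$. Both are standard and harmless, so the argument stands; compared with the paper's bare citation, your route buys self-containedness at the cost of invoking the subword characterization, which is of comparable depth to the lemma itself.
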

\begin{defn}
If $X \subseteq W$, let $Pos(X) = \{ s \in S \mid \ell(xs) > \ell(x) \text{for all} {x \in X} \}$.
\end{defn}
Thus $Pos(X)$ is the largest subset $J$ of $S$ such that $X \subseteq D_J$ .
Let $\textbf{E}$
be an ideal in the poset $(W,\leqslant_L)$; that is, $\textbf{E}$ is a subset of $W$ such that every
$u \in W$ that is a suffix of an element of $\textbf{E}$  is itself in $\textbf{E}$ . This condition implies
that $Pos( \textbf{E} ) = S \backslash \textbf{E} = \{ s \in S \mid s \notin E \}$ . Let $J$ be a subset of
$Pos(\textbf{E})$, so that $\textbf{E} \subseteq D_J$. In contexts we shall denote by $\textbf{E}_J$ for the set $\textbf{E}$, with reference to $J$,   For each $s\in S $ we classify the elements in $\textbf{E}_J$ as follows:
\begin{align*}
\textbf{E}_{J,s}^-&=\{\,w\in \textbf{E}_J\mid \text{$\ell(sw)<\ell(w)$ and $sw\in \textbf{E}_J$}\,\},\\
\textbf{E}_{J,s}^+&=\{\,w\in \textbf{E}_J\mid \text{$\ell(sw)>\ell(w)$ and
   $sw\in \textbf{E}_J$}\,\},\\
 \textbf{E}_{J,s}^{0,-}&=\{\,w\in \textbf{E}_J\mid \text{$\ell(sw)>\ell(w)$ and
   $sw\notin D_J$}\,\},\\
 \textbf{E}_{J,s}^{0,+}&=\{\,w\in \textbf{E}_J\mid \text{$\ell(sw)>\ell(w)$ and
   $sw\in {D_J\backslash \textbf{E}_J} $}\,\}.
\end{align*}

Since $\textbf{E}_J\subseteq D_J$ it is clear that, for each $w \in \textbf{E}_J$, each $s \in S$ appears in exactly one of
the following four sets $SA(w)=\{s\in S\mid w\in  \textbf{E}_{J,s}^+ \}, SD(w)=\{s\in S\mid w\in  \textbf{E}_{J,s}^- \}$,
${WA}_J =\{s\in S\mid w\in  \textbf{E}_{J,s}^{0,+} \} $ and ${WD}_J =\{s\in S\mid w\in  \textbf{E}_{J,s}^{0,-} \} $.
We call the elements of these sets the strong ascents,
strong descents, weak ascents and weak descents of $w$ relative to $\textbf{E}_J$ and $J$.
In contexts where the ideal $\textbf{E}_J$ and the set $J$ is fixed we frequently omit reference to $J$, writing $WA(w)$
and ${WD}(w)$ rather than ${WA}_J (w)$ and ${WD}_J (w)$. We also define the sets of descents
and ascents of $w$ by $D (w) = SD(w) \cup {WD}(w)$ and $A (w) = SA(w) \cup {WA}(w)$.

{\textbf{Remark}}. It follows from ~\ref{lemma:1} that
\begin{align*}
{WA}_J (w) = \{ s \in S \mid \text{$sw \notin \textbf{E}_J$  and $w^{-1}sw \notin J$ }\},\\
{WD}_J (w) = \{ s \in  S \mid \text{$sw \notin \textbf{E}_J$  and $w^{-1}sw \in J$ }\}.
\end{align*}
since $sw \notin \textbf{E}_J$ implies that $sw > w$ (given that $\textbf{E}_J$ is an ideal in $(W,\leqslant L)$). Note also
that $J = {WD}_J (1)$.
\begin{defn}\cite[Definition 5.1]{Howlett}(modified)\label{def3}
 Let $(W,S)$ be a Coxeter group with weight function $L$ such that $L(s)\geqslant 0$ for all $s\in S$, $\Hecke$ be the corresponding Hecke algebra.
 The set $\textbf{E}_J$ is said to be a $W$-graph ideal
with respect to $J (\subseteq S)$ and $L$ if the following hypotheses are satisfied.

(i) There exists an $A$-free $\Hecke$-module $M(\textbf{E}_J, L)$ possessing an $A$-basis

 \[
 B =\{ \Gamma_w | w \in \textbf{E}_J \},
 \]
 for any $s\in S$ and any $w\in \textbf{E}_J $ we have
\begin{equation}\label{eq:4}
T_s\Gamma_w=
\begin{cases}
\Gamma_{sw}+(q^{L(s)}-q^{-L(s)})\Gamma_w &\text{if $w\in \textbf{E}_{J,s}^-$},\\
\Gamma_{sw}
&\text{if $w\in \textbf{E}_{J,s}^+$},\\
-q^{-L(s)}\Gamma_w & \text{if $w\in \textbf{E}_{J,s}^{0,-}$ },\\
q^{L(s)} \Gamma_w-\sum_{\substack{z \in \textbf{E}_J\\z<w}}r^s_{z,w}\Gamma_z& \text{if $w\in \textbf{E}_{J,s}^{0,+}$},
\end{cases}
\end{equation}
for some polynomials $r^s_{z,w}\in q^{L(s)}A_{>0}$.
(ii) The module $M(\textbf{E}_J, L)$ admits an $A$-semilinear involution $\alpha\mapsto \overline{\alpha}$ satisfying $\overline{\Gamma_1}=\Gamma_1$
and $\overline{h\alpha}=\overline{h}\overline{\alpha}$ for all $h\in \Hecke$ and $\alpha\in M(\textbf{E}_J, L)$.
\end{defn}
An obvious induction on $\ell(w)$ shows that $\Gamma_w=T_w\Gamma_1$ for all $w\in \textbf{E}_J$.
\begin{defn}~(\cite[Definition 5.2]{Howlett})
 If $w \in W$ and $\textbf{E}_J = \{ u \in W \mid u \leqslant_L w \}$ is a $W$-graph ideal with
respect to some $J \subseteq S$ then we call $w$ a $W$-graph determining element.
\end{defn}

\textbf{Remark}. It has been verified in ~\cite[Section 5]{Howlett} that if $W$ is finite then $w_S$,
the maximal length element of $W$, is a $W$-graph determining element with respect to $\emptyset$, and
$d_J$ , the minimal length element of the left coset $w_SW_J$ , is a $W$-graph determining
element with respect to $J$ and also with respect to $\emptyset$.

\section{Duality theorem for $W$-graph ideals}

Let $(W,S)$ be a Coxeter group with weight function $L$ such that $L(s)\geqslant 0$ for all $s\in S$, $\Hecke$ be the corresponding Hecke algebra.
There exists an algebra map $\Phi: \Hecke\rightarrow \Hecke $ given by  $\Phi(q^{L(s)})=q^{L(s)}$ for all $s\in S$, and $\Phi(T_w)=\epsilon_w\overline{T_w}$, where the bar
is the standard involution in $\Hecke$. Further, $\Phi^2=Id$ and $\Phi$ commutes with the bar involution.
\subsection*{Duality theorem}
We now give an equivalent definition of a $W$-graph ideal, and the associated module is denoted by $\widetilde{M}(\textbf{E}_J, L)$.
 The following theorem essentially provides the duality between the two set ups.

\begin{thm-defn}
(I) With the above notation, let
the set $\textbf{E}_J$  be a $W$-graph ideal
with respect to $J (\subseteq S)$ and $L$, then the following hypotheses are satisfied.

(i) There exists an $A$-free $\Hecke$-module $\widetilde{M}(\textbf{E}_J, L)$ possessing an $A$-basis

 \[
 \widetilde{B} =\{ \widetilde{\Gamma}_w | w \in \textbf{E}_J \},
 \]
 for any $s\in S$ and any $w\in \textbf{E}_J $ we have
\begin{equation}\label{eq:4}
T_s\widetilde{\Gamma}_w=
\begin{cases}
\widetilde{\Gamma}_{sw}+(q^{L(s)}-q^{-L(s)})\widetilde{\Gamma}_w &\text{if $w\in \textbf{E}_{J,s}^-$},\\
\widetilde{\Gamma}_{sw}
&\text{if $w\in \textbf{E}_{J,s}^+$},\\
q^{L(s)}\widetilde{\Gamma}_w & \text{if $w\in \textbf{E}_{J,s}^{0,-}$ },\\
-q^{-L(s)} \widetilde{\Gamma}_w +\sum_{\substack{z \in \textbf{E}_J\\z<w}}\widetilde{r}^s_{z,w}\widetilde{\Gamma}_z& \text{if $w\in \textbf{E}_{J,s}^{0,+}$},
\end{cases}
\end{equation}
where $\widetilde{r}^s_{z,w}=\epsilon_z\epsilon_w\overline{r^s_{z,w} }\in q^{-L(s)}A_{<0}$.
(ii) The module $\widetilde{M}(\textbf{E}_J, L)$ admits an $A$-semilinear involution $\widetilde{\alpha}\mapsto \overline{\widetilde{\alpha}}$ satisfying $\overline{\widetilde{\Gamma}_1}=\widetilde{\Gamma}_1$
and $\overline{h\widetilde{\alpha}}=\overline{h}\overline{\widetilde{\alpha}}$ for all $h\in \Hecke$ and $\widetilde{\alpha}\in \widetilde{M}(\textbf{E}_J, L)$.

(II) There exists a unique map $\eta: M(\textbf{E}_J, L)\rightarrow\widetilde{M}(\textbf{E}_J, L)$ such that
\begin{align*}
(i) \eta(\Gamma_1)&=\widetilde{\Gamma}_1; \\
(ii) \eta(h \Gamma)&= \Phi(h)\eta(\Gamma),  \text{for all
$h\in \Hecke$ and $\Gamma\in M(\textbf{E}_J, L)$ }.
\end{align*}
( i.e., $\eta$ is $\Phi$-linear).
 Further, it has the following properties:

(a) $\eta$ commutes with the involution 
on $M(\textbf{E}_J, L)$ and $\widetilde{M}(\textbf{E}_J, L)$.

(b) $\eta$ is one-to-one onto and the inverse $\theta$ of $\eta$, satisfies properties (i) and (ii) of $\eta$.
\end{thm-defn}

\begin{proof}
For $w\in \textbf{E}_J$, define $\eta(\Gamma_w)=\epsilon_w\overline{\widetilde{\Gamma}_w}$. Extend $\eta$ to the whole
of $M(\textbf{E}_J, L)$ by $\Phi$-linearity. Let $s\in S$. Then we have,
\begin{equation}
\eta(T_s\Gamma_w)=
\begin{cases}
\eta[\Gamma_{sw}+(q^{L(s)}-q^{-L(s)})\Gamma_w] &\text{if $w\in \textbf{E}_{J,s}^-$},\\
\eta(\Gamma_{sw})
&\text{if $w\in \textbf{E}_{J,s}^+$},\\
\eta(-q^{-L(s)}\Gamma_w) & \text{if $w\in \textbf{E}_{J,s}^{0,-}$ },\\
\eta(q^{L(s)} \Gamma_w-\sum_{\substack{z \in \textbf{E}_J\\z<w}}r^s_{z,w}\Gamma_z)& \text{if $w\in \textbf{E}_{J,s}^{0,+}$},
\end{cases}
\end{equation}
which equals to
\begin{equation}
\begin{cases}
\epsilon_{sw}\overline{\widetilde{\Gamma}_{sw}}+(q^{L(s)}-q^{-L(s)})\epsilon_w\overline{\widetilde{\Gamma}_w} &\text{if $w\in \textbf{E}_{J,s}^-$},\\
\epsilon_{sw}\overline{\widetilde{\Gamma}_{sw}}
&\text{if $w\in \textbf{E}_{J,s}^+$},\\
-q^{-L(s)}\epsilon_w\overline{\widetilde{\Gamma}_w} & \text{if $w\in \textbf{E}_{J,s}^{0,-}$ },\\
q^{L(s)} \epsilon_w\overline{\widetilde{\Gamma}_w}-\sum_{\substack{z \in \textbf{E}_J\\z<w}}r^s_{z,w}\epsilon_z\overline{\widetilde{\Gamma}_z}& \text{if $w\in \textbf{E}_{J,s}^{0,+}$},
\end{cases}
\end{equation}
for some polynomials $r^s_{z,w}\in q^{L(s)}A_{>0}$.
On the other hand
\begin{align*}
\Phi(T_s)\eta(\Gamma_w)&=-\overline{T_s}\epsilon_w\overline{\widetilde{\Gamma}_w}\\
&=(-1)^{\ell(w)+1}\overline{T_s\widetilde{\Gamma}_w}\\
&=(-1)^{\ell(w)+1}
\begin{cases}
\overline{\widetilde{\Gamma}_{sw}+(q^{L(s)}-q^{-L(s)})\widetilde{\Gamma}_w} &\text{if $w\in \textbf{E}_{J,s}^-$},\\
\overline{\widetilde{\Gamma}_{sw}}
&\text{if $w\in \textbf{E}_{J,s}^+$},\\
\overline{q^{L(s)}\widetilde{\Gamma}_w} & \text{if $w\in \textbf{E}_{J,s}^{0,-}$ },\\
\overline{-q^{-L(s)} \widetilde{\Gamma}_w -\sum_{\substack{z \in \textbf{E}_J\\z<w}}\widetilde{r}^s_{z,w}\widetilde{\Gamma}_z}& \text{if $w\in \textbf{E}_{J,s}^{0,+}$},
\end{cases}
\end{align*}
It is easy to check that these two expressions give the same result, and this shows that
$\eta(T_s\Gamma_w)=\Phi(T_s)\eta(\Gamma_w)$. It is also easy to see that $\eta(h\Gamma_w)=\Phi(h)\eta(\Gamma_w)$
for all $h\in \Hecke$ and $\Gamma_w\in M(\textbf{E}_J, L)$.

If $\eta'$ is another map satisfying properties (i) and (ii), then
\begin{align*}
\eta'(\Gamma_w)=\eta'(T_w\Gamma_1)=\Phi(T_w)\widetilde{\Gamma}_1=\epsilon_w\overline{T_w}\widetilde{\Gamma}_1
=\epsilon_w\overline{T_w\widetilde{\Gamma}_1}=\epsilon_w\overline{\widetilde{\Gamma}_w}
\end{align*}
It is now clear that $\eta'=\eta$.

To prove statement (a), observe that for any $\Gamma\in M(\textbf{E}_J, L)$, there exists $h\in \Hecke$ such that
$\Gamma=h\Gamma_1$. Thus
\begin{align*}
\overline{\eta(\Gamma)}=\overline{\eta(h\Gamma_1)}=\overline{\Phi(h)\widetilde{\Gamma}_1}=\overline{\Phi(h)}\widetilde{\Gamma}_1
=\Phi(\overline{h})\widetilde{\Gamma}_1=\eta(\overline{h}\widetilde{\Gamma}_1)=\eta(\overline{\Gamma}).
\end{align*}
This proves (a).

We interchange the role of these two modules to obtain a map
\[
\theta: \widetilde{M}(\textbf{E}_J, L) \rightarrow  M(\textbf{E}_J, L)
\]
such that $\theta(\widetilde{\Gamma}_w )=\epsilon_w\overline{\Gamma_w}$. It is easy to check that $\theta$ and $\eta$ are inverses of each other. This proves (b)
\end{proof}
\begin{corollary}
If $R_{x,y}$ and $\widetilde{R}_{x, y}$ are the polynomials given by the formula
\begin{align*}
\overline{\Gamma_y}=\!\! \sum_{x \in \textbf{E}_J}R_{x,y} \Gamma_x ,
\overline{\widetilde{\Gamma}_y}=\!\! \sum_{x \in \textbf{E}_J}\widetilde{R}_{x,y} \widetilde{\Gamma}_x
\end{align*}
then
\[
\overline{R_{x, y}}=\epsilon_x\epsilon_y\widetilde{R}_{x,y}.
\]
\end{corollary}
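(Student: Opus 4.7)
The plan is to apply the duality map $\eta\colon M(\textbf{E}_J,L)\to\widetilde{M}(\textbf{E}_J,L)$ constructed in the Theorem-definition directly to the expansion $\overline{\Gamma_y}=\sum_{x}R_{x,y}\Gamma_x$, and then peel off one more bar in order to compare coefficients in the $A$-basis $\widetilde{B}=\{\widetilde{\Gamma}_x\}$. Three features of $\eta$ will do all the work: the explicit formula $\eta(\Gamma_w)=\epsilon_w\overline{\widetilde{\Gamma}_w}$, the intertwining with the two bar involutions (property (a)), and the fact that $\Phi$ restricted to the coefficient ring $A$ is the identity --- this is forced by $\Phi(q^{L(s)})=q^{L(s)}$ together with $\Phi$ commuting with the bar, and ensures that $\eta$ behaves $A$-linearly on scalar multiples.

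First I apply $\eta$ to both sides of $\overline{\Gamma_y}=\sum_x R_{x,y}\Gamma_x$. The right-hand side becomes $\sum_{x}\epsilon_x R_{x,y}\,\overline{\widetilde{\Gamma}_x}$ by $\Phi$-linearity and the explicit formula, while the left-hand side, by property (a) followed by $\overline{\epsilon_y}=\epsilon_y$, becomes $\overline{\eta(\Gamma_y)}=\overline{\epsilon_y\overline{\widetilde{\Gamma}_y}}=\epsilon_y\widetilde{\Gamma}_y$. This yields the intermediate identity
\[
\epsilon_y\widetilde{\Gamma}_y=\sum_{x\in\textbf{E}_J}\epsilon_x R_{x,y}\,\overline{\widetilde{\Gamma}_x}.
\]

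Next I apply the bar involution of $\widetilde{M}(\textbf{E}_J,L)$ to this equation, which converts each $\overline{\widetilde{\Gamma}_x}$ back to $\widetilde{\Gamma}_x$ and changes the scalars $R_{x,y}$ to $\overline{R_{x,y}}$, giving
\[
\epsilon_y\overline{\widetilde{\Gamma}_y}=\sum_{x\in\textbf{E}_J}\epsilon_x\overline{R_{x,y}}\,\widetilde{\Gamma}_x.
\]
Expanding the left-hand side by the defining relation $\overline{\widetilde{\Gamma}_y}=\sum_x\widetilde{R}_{x,y}\widetilde{\Gamma}_x$ and comparing coefficients in the $A$-basis $\{\widetilde{\Gamma}_x\}$ produces $\epsilon_y\widetilde{R}_{x,y}=\epsilon_x\overline{R_{x,y}}$. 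Multiplying both sides by $\epsilon_x$ (using $\epsilon_x^2=1$) gives the claimed identity $\overline{R_{x,y}}=\epsilon_x\epsilon_y\widetilde{R}_{x,y}$.

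The only point that genuinely requires care is the treatment of the scalar multiples $R_{x,y}\Gamma_x$ under $\eta$: this is where the identification $\Phi|_A=\mathrm{id}$ matters, so that no additional bar creeps in when coefficients pass through $\eta$. Once that is settled, the argument is purely bookkeeping, threading the two involutions and the map $\eta$ through the defining expansions, and I do not anticipate any further obstacle.
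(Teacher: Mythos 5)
Your proof is correct and follows exactly the approach the paper indicates: apply $\eta$ to both sides of $\overline{\Gamma_y}=\sum_x R_{x,y}\Gamma_x$, use that $\eta$ commutes with the bar involutions and that $\eta(\Gamma_x)=\epsilon_x\overline{\widetilde{\Gamma}_x}$, apply one more bar, and compare coefficients against $\overline{\widetilde{\Gamma}_y}=\sum_x\widetilde{R}_{x,y}\widetilde{\Gamma}_x$. The only supplemental observation you make, that $\Phi|_A=\mathrm{id}$ so $\eta$ is honestly $A$-linear on scalars, is the right reading of the paper's definition of $\Phi$ as an $A$-algebra map, and it fills in exactly the bookkeeping the paper omits.
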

\begin{proof} Apply the function $\eta$ to both the sides of the formula for $\overline{\Gamma_y}$ and
use the fact that $\eta$ commutes with the involution and then use the
formula for $\overline{\widetilde{\Gamma}_y}$. We omit the details.
\end{proof}
The above result can also be proved by the following recursive formulas.
\begin{lemma}\cite[Prop. 4.1]{Yin}
Let $x,\,y\in \textbf{E}_J$. If $s\in S$ is such that $y\in \textbf{E}_{J, s}^-$
then
\[
\postdisplaypenalty=10000 \advance\abovedisplayskip 0 pt minus 3
pt \advance\belowdisplayskip 0 pt minus 3 pt R_{x,y}=
\begin{cases}
R_{sx,sy}&\text{if $x\in \textbf{E}_{J,s}^-$},\\
R_{sx,sy}+(q^{-L(s)}-q^{L(s)})R_{x,sy}
&\text{if $x\in \textbf{E}_{J,s}^+$},\\
-q^{L(s)}R_{x,sy} & \text{if $x\in \textbf{E}_{J,s}^{0,-}$ },\\
q^{-L(s)}R_{x,sy}& \text{if $x\in \textbf{E}_{J,s}^{0,+}$}.
\end{cases}
\]
\end{lemma}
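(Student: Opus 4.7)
The plan is to derive the recursion from the identity $\Gamma_y = T_s \Gamma_{sy}$, which holds because $y \in \textbf{E}_{J,s}^-$ forces $sy$ to lie in $\textbf{E}_{J,s}^+$, so case (ii) of (\ref{eq:4}) gives $T_s \Gamma_{sy} = \Gamma_{s(sy)} = \Gamma_y$. Applying the bar involution of $M(\textbf{E}_J, L)$, which is $A$-semilinear and satisfies $\overline{T_s \alpha} = \overline{T_s}\, \overline{\alpha}$, together with the standard formula $\overline{T_s} = T_s + (q^{-L(s)} - q^{L(s)})$ in $\Hecke$, yields
\[
\overline{\Gamma_y} \;=\; T_s\, \overline{\Gamma_{sy}} \;+\; (q^{-L(s)} - q^{L(s)})\, \overline{\Gamma_{sy}}.
\]

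Next, I would substitute the defining expansion $\overline{\Gamma_{sy}} = \sum_{x' \in \textbf{E}_J} R_{x', sy} \Gamma_{x'}$ into the displayed identity, distribute $T_s$, and expand each $T_s \Gamma_{x'}$ using (\ref{eq:4}) according to which of $\textbf{E}_{J,s}^{-}$, $\textbf{E}_{J,s}^{+}$, $\textbf{E}_{J,s}^{0,-}$, $\textbf{E}_{J,s}^{0,+}$ contains $x'$. Comparing the resulting expression with $\overline{\Gamma_y} = \sum_x R_{x, y} \Gamma_x$ and equating coefficients of each basis element $\Gamma_x$, the shape of the formula for $R_{x, y}$ is then controlled by which of the four sets contains the target $x$. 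In each case the principal contribution comes from a single $x'$ (either $x' = sx$, when the $\Gamma_{sx'}$ part of the action lands on $\Gamma_x$, or $x' = x$, when the diagonal scalar part survives), and the $q^{\pm L(s)}$ factors from that action must be combined with the scalar tail $(q^{-L(s)} - q^{L(s)}) R_{x, sy}$. A straightforward algebraic simplification in each of the four cases then produces precisely the four formulas stated in the lemma.

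The main technical obstacle will be the contribution of terms $x' \in \textbf{E}_{J,s}^{0,+}$, whose $T_s$-action introduces the auxiliary lower tail $-\sum_{z < x',\, z \in \textbf{E}_J} r^s_{z, x'} \Gamma_z$. A priori, each such $x'$ with $x < x'$ could contribute to $R_{x, y}$ via a term involving $r^s_{x, x'}$, and no such term appears in the claimed formula. I expect to handle this by strong induction on $\ell(y)$: the inductive hypothesis pins down all the $R_{\cdot, sy}$ with correct degree bounds, and the uniqueness of the bar expansion of $\overline{\Gamma_y}$ in the basis $\{\Gamma_x\}$ forces the spurious $r^s$-contributions to cancel against other like-type terms. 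Once this cancellation is verified, reading off coefficients in each of the four target types for $x$ yields exactly the recursion stated.
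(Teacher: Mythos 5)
Your starting point is exactly right: since $y\in \textbf{E}_{J,s}^-$ we have $sy\in \textbf{E}_{J,s}^+$, so $\Gamma_y = T_s\Gamma_{sy}$, and applying the bar involution gives $\overline{\Gamma_y} = \overline{T_s}\,\overline{\Gamma_{sy}} = T_s\overline{\Gamma_{sy}} + (q^{-L(s)}-q^{L(s)})\overline{\Gamma_{sy}}$. Expanding $\overline{\Gamma_{sy}}=\sum_{x'}R_{x',sy}\Gamma_{x'}$ and acting by $T_s$ case-by-case is also the right bookkeeping. The issue is in the last paragraph.

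When you carry out that expansion and collect the coefficient of $\Gamma_x$, you find for every type of $x$
\[
R_{x,y} \;=\; \bigl[\text{the stated expression}\bigr] \;-\; \sum_{\substack{x'\in \textbf{E}_{J,s}^{0,+}\\ x<x'}} R_{x',sy}\, r^s_{x,x'},
\]
and the claimed recursion holds if and only if that tail sum vanishes. You identify this and say it is a ``technical obstacle'' that will ``cancel against other like-type terms'' by strong induction and uniqueness of the bar expansion. That is the gap: in the expansion of $\overline{T_s}\,\overline{\Gamma_{sy}}$ there simply are no other terms of the same shape for these $r^s$-contributions to cancel against. The quantities $R_{x',sy}r^s_{x,x'}$ do not reappear anywhere else in the expansion, and ``uniqueness of the expansion'' cannot create a cancellation; it only tells you that whatever coefficient you compute is $R_{x,y}$. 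Likewise the induction on $\ell(y)$ lands on $R_{\cdot,sy}$, but the lemma being proved concerns $R_{\cdot,y}$ with $y\in\textbf{E}_{J,s}^-$, so there is no available inductive formula for $R_{x',sy}$ with $sy\in\textbf{E}_{J,s}^+$ that would let you rewrite the tail.

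What is actually needed is a separate structural fact, namely that for $y\in\textbf{E}_{J,s}^-$ the sum $\sum_{x'\in\textbf{E}_{J,s}^{0,+},\,x<x'}R_{x',sy}r^s_{x,x'}$ vanishes identically (for instance, this would follow if one could show $R_{x',sy}=0$ for every $x'\in\textbf{E}_{J,s}^{0,+}$, or establish a constraint on the $r^s_{z,w}$ coming from the quadratic relation $T_s^2=1+(q^{L(s)}-q^{-L(s)})T_s$ applied to $\Gamma_{x'}$). Your proposal neither states nor proves such a fact; it only asserts that the cancellation happens. Until that vanishing is established, the argument does not yield the recursion as stated, because the $r^s$-tail remains in the formula.
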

Similarly we have
\begin{lemma}
Let $x,\,y\in \textbf{E}_J$. If $s\in S$ is such that $y\in \textbf{E}_{J, s}^-$
then
\[
\postdisplaypenalty=10000 \advance\abovedisplayskip 0 pt minus 3
pt \advance\belowdisplayskip 0 pt minus 3 pt \widetilde{R}_{x,y}=
\begin{cases}
\widetilde{R}_{sx,sy}&\text{if $x\in \textbf{E}_{J,s}^-$},\\
\widetilde{R}_{sx,sy}+(q^{-L(s)}-q^{L(s)})\widetilde{R}_{x,sy}
&\text{if $x\in \textbf{E}_{J,s}^+$},\\
q^{-L(s)}\widetilde{R}_{x,sy} & \text{if $x\in \textbf{E}_{J,s}^{0,-}$ },\\
-q^{L(s)}\widetilde{R}_{x,sy}& \text{if $x\in \textbf{E}_{J,s}^{0,+}$}.
\end{cases}
\]
\end{lemma}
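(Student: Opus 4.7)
The plan is to deduce the recursion for $\widetilde{R}_{x,y}$ directly from the already-proved recursion for $R_{x,y}$ by translating it through the duality, using the corollary $\overline{R_{x,y}}=\epsilon_x\epsilon_y\widetilde{R}_{x,y}$. Equivalently, $\widetilde{R}_{x,y}=\epsilon_x\epsilon_y\overline{R_{x,y}}$. Starting from the four cases of the previous lemma, I would apply the bar involution (which swaps $q^{L(s)}\leftrightarrow q^{-L(s)}$) to each formula, then multiply through by $\epsilon_x\epsilon_y$ and substitute $\overline{R_{u,v}}=\epsilon_u\epsilon_v\widetilde{R}_{u,v}$ on the right-hand side.

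The key bookkeeping is the observation that $\epsilon_{sz}=-\epsilon_z$ for any $z\in W$ and $s\in S$, so that $\epsilon_{sx}\epsilon_{sy}=\epsilon_x\epsilon_y$ while $\epsilon_x\epsilon_{sy}=-\epsilon_x\epsilon_y$. As a sample of the calculation, in the case $x\in\textbf{E}_{J,s}^{0,-}$ the identity $R_{x,y}=-q^{L(s)}R_{x,sy}$ yields $\overline{R_{x,y}}=-q^{-L(s)}\overline{R_{x,sy}}$; multiplying by $\epsilon_x\epsilon_y$ and using $\overline{R_{x,sy}}=\epsilon_x\epsilon_{sy}\widetilde{R}_{x,sy}$ gives
\[
\widetilde{R}_{x,y}=-\epsilon_y\epsilon_{sy}q^{-L(s)}\widetilde{R}_{x,sy}=q^{-L(s)}\widetilde{R}_{x,sy},
\]
matching the claim. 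The three remaining cases are entirely analogous: in the case $x\in\textbf{E}_{J,s}^-$ the same identities convert $R_{x,y}=R_{sx,sy}$ into $\widetilde{R}_{x,y}=\widetilde{R}_{sx,sy}$; in the case $x\in\textbf{E}_{J,s}^+$ the combination $\epsilon_{sx}\epsilon_{sy}=\epsilon_x\epsilon_y$ on the first term and $\epsilon_x\epsilon_{sy}=-\epsilon_x\epsilon_y$ on the second produces exactly the sign flip $q^{L(s)}-q^{-L(s)}\mapsto q^{-L(s)}-q^{L(s)}$; and the case $x\in\textbf{E}_{J,s}^{0,+}$ is handled like the $0,-$ case.

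The only real obstacle is keeping the signs straight across the four cases, and there is no new conceptual input needed beyond the duality corollary. As an alternative self-contained route — closer to the spirit of \cite[Prop.~4.1]{Yin} — one could instead mimic that proof directly inside $\widetilde{M}(\textbf{E}_J,L)$: expand $\overline{\widetilde{\Gamma}_y}=\overline{T_s\widetilde{\Gamma}_{sy}}=T_s^{-1}\overline{\widetilde{\Gamma}_{sy}}$, substitute the defining relations for $T_s$ acting on $\widetilde{\Gamma}_x$ for each class $x\in\textbf{E}_{J,s}^-,\textbf{E}_{J,s}^+,\textbf{E}_{J,s}^{0,-},\textbf{E}_{J,s}^{0,+}$ from the theorem-definition, and read off coefficients. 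Either path produces the stated four-case recursion, but the duality argument is strictly shorter and so is the route I would actually write up.
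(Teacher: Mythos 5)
Your primary argument is correct, and the case-by-case sign bookkeeping is right: $\epsilon_{sx}\epsilon_{sy}=\epsilon_x\epsilon_y$ cancels in the $\textbf{E}_{J,s}^{\pm}$ cases, while the single factor $\epsilon_y\epsilon_{sy}=-1$ produces exactly the needed sign change in the $\textbf{E}_{J,s}^{0,\pm}$ cases, converting $-q^{L(s)}\mapsto q^{-L(s)}$ and $q^{-L(s)}\mapsto -q^{L(s)}$ after applying the bar.

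However, the route you prefer is not the paper's. The paper presents this lemma with only the word ``similarly,'' meaning: redo the proof of Lemma~3.3 (i.e.\ \cite[Prop.~4.1]{Yin}) directly inside $\widetilde{M}(\textbf{E}_J,L)$, expanding $\overline{\widetilde{\Gamma}_y}=T_s^{-1}\overline{\widetilde{\Gamma}_{sy}}$ and reading off coefficients against the multiplication rules of Theorem-definition~3.1~(I) --- this is exactly your mentioned alternative. The reason the paper proceeds that way rather than by your duality shortcut is structural: immediately before Lemmas~3.3 and~3.4 the text states that ``the above result [Corollary~3.2] can also be proved by the following recursive formulas,'' i.e.\ the pair of recursions is meant to furnish an \emph{independent} inductive proof of $\overline{R_{x,y}}=\epsilon_x\epsilon_y\widetilde{R}_{x,y}$. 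Your primary derivation uses Corollary~3.2 as input, so if adopted verbatim it would render that advertised alternative proof circular. Your argument is therefore valid as a proof of the lemma in isolation, and it genuinely is shorter, but if you intend to preserve the paper's logical architecture you should present the direct computation in $\widetilde{M}$ as the actual proof and relegate the duality translation to a consistency check.
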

We have the further properties of $R_{x,y}$.
\begin{lemma}
If $y\in \textbf{E}_{J,s}^{0,-}$ then we have
\[
\postdisplaypenalty=10000 \advance\abovedisplayskip 0 pt minus 3
pt \advance\belowdisplayskip 0 pt minus 3 pt R_{x,y}=
\begin{cases}
-q^{-L(s)}R_{sx,y}&\text{if $x\in \textbf{E}_{J,s}^-$},\\
-q^{L(s)}R_{sx,y}
&\text{if $x\in \textbf{E}_{J,s}^+$}.
\end{cases}
\]

If $y\in \textbf{E}_{J,s}^{0,+}$ then we have
\[
\postdisplaypenalty=10000 \advance\abovedisplayskip 0 pt minus 3
pt \advance\belowdisplayskip 0 pt minus 3 pt R_{x,y}=
\begin{cases}
q^{L(s)}R_{sx,y}&\text{if $x\in \textbf{E}_{J,s}^-$},\\
q^{-L(s)}R_{sx,y}
&\text{if $x\in \textbf{E}_{J,s}^+$}.
\end{cases}
\]

\end{lemma}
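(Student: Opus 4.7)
The strategy mirrors that of Proposition 4.1: start from the defining action of $T_s$ on $\Gamma_y$, apply the bar involution, and match the coefficient of each $\Gamma_u$ in the expansion $\overline{\Gamma_y} = \sum_{x\in\textbf{E}_J} R_{x,y}\,\Gamma_x$, using the four-case formula for $T_s\Gamma_x$ given in the definition of a $W$-graph ideal.

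\textbf{The case $y \in \textbf{E}_{J,s}^{0,-}$.} Here $T_s\Gamma_y = -q^{-L(s)}\Gamma_y$; combined with $\overline{T_s} = T_s - (q^{L(s)} - q^{-L(s)})$, this yields the eigenvector relation $T_s\,\overline{\Gamma_y} = -q^{-L(s)}\,\overline{\Gamma_y}$. Matching coefficients of $\Gamma_u$ produces four equations, one for each class that $u$ may belong to. The equation for $u \in \textbf{E}_{J,s}^{0,-}$ is automatic, and the one for $u \in \textbf{E}_{J,s}^{0,+}$ simplifies to
\[
(q^{L(s)} + q^{-L(s)})\,R_{u,y} \;=\; \sum_{\substack{x \in \textbf{E}_{J,s}^{0,+}\\ x > u}} r^s_{u,x}\, R_{x,y}.
\]
A downward induction on the Bruhat order within $\textbf{E}_{J,s}^{0,+}$, with base case at a maximal $u\in\{x\in\textbf{E}_{J,s}^{0,+}:x\leqslant y\}$ where the sum is empty, then forces $R_{u,y} = 0$ for every $u \in \textbf{E}_{J,s}^{0,+}$. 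With these coefficients vanishing, the equations for $u \in \textbf{E}_{J,s}^{-}$ and $u \in \textbf{E}_{J,s}^{+}$ collapse to $q^{L(s)}\,R_{u,y} + R_{su,y} = 0$, which is equivalent to both claimed recursions.

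\textbf{The case $y \in \textbf{E}_{J,s}^{0,+}$.} Now the defining formula carries an inhomogeneous tail, and the barred identity becomes
\[
(T_s - q^{L(s)})\,\overline{\Gamma_y} \;=\; -\sum_{z<y}\overline{r^s_{z,y}}\,\overline{\Gamma_z}.
\]
Coefficient-matching produces analogous equations but with extra terms $\sum_z \overline{r^s_{z,y}}\,R_{u,z}$. I would handle this by nesting the previous downward Bruhat induction inside an outer induction on $\ell(y)$: the additional contributions involve $R_{u,z}$ for $z<y$, which the outer hypothesis controls, allowing the inner induction to proceed as in the first case and yielding the recursions $R_{u,y} = q^{L(s)}R_{su,y}$ and $R_{u,y} = q^{-L(s)}R_{su,y}$.

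\textbf{Main obstacle.} The $\textbf{E}_{J,s}^{0,+}$ case is the delicate one: the inhomogeneous tail couples distinct Bruhat layers, and the outer induction on $\ell(y)$ must be arranged so that at each step of the inner Bruhat induction the $R_{u,z}$ appearing in the extra sum are precisely those already controlled by the outer hypothesis. The $\textbf{E}_{J,s}^{0,-}$ vanishing lemma (established by the Bruhat induction above) is what makes the eigenvector-based argument clean in the first case; reproducing that cleanness in the presence of the $r^s_{z,y}$-tail is where the main technical care is needed.
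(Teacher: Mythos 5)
Your treatment of the first case ($y\in \textbf{E}_{J,s}^{0,-}$) is correct and follows the same route as the paper: bar the relation $T_s\Gamma_y=-q^{-L(s)}\Gamma_y$, use $\overline{T_s}=T_s+(q^{-L(s)}-q^{L(s)})$ to get $T_s\overline{\Gamma_y}=-q^{-L(s)}\overline{\Gamma_y}$, expand in the $\Gamma$-basis and compare coefficients. Your additional step --- proving $R_{u,y}=0$ for all $u\in \textbf{E}_{J,s}^{0,+}$ by downward Bruhat induction from $(q^{L(s)}+q^{-L(s)})R_{u,y}=\sum_{x\in \textbf{E}_{J,s}^{0,+},\,x>u}r^s_{u,x}R_{x,y}$ --- is exactly what is needed to kill the cross terms coming from the fourth case of the $T_s$-action, and is more careful than the paper's one-line ``comparing the coefficients''. (A small imprecision: the equation at $u\in \textbf{E}_{J,s}^{0,-}$ is not automatic a priori; it reads $\sum_{x>u}r^s_{u,x}R_{x,y}=0$ and only becomes vacuous once the vanishing is proved. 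This does not affect your argument.)

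The genuine gap is the second case, $y\in \textbf{E}_{J,s}^{0,+}$, which you only sketch and which the paper likewise dismisses as ``similar''. Matching the coefficient of $\Gamma_u$ for $u\in \textbf{E}_{J,s}^{\pm}$ in your identity $(T_s-q^{L(s)})\overline{\Gamma_y}=-\sum_{z<y}\overline{r^s_{z,y}}\,\overline{\Gamma_z}$ shows that the claimed recursions are \emph{equivalent} to the nontrivial identity
\begin{equation*}
\sum_{\substack{x\in \textbf{E}_{J,s}^{0,+}\\ x>u}} r^s_{u,x}\,R_{x,y}\;=\;\sum_{\substack{z\in \textbf{E}_J\\ z<y}}\overline{r^s_{z,y}}\,R_{u,z},
\end{equation*}
and your proposal never explains how this equality of two structurally different sums is obtained. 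The nested induction you describe does not supply it: the outer hypothesis (the lemma for shorter $y$) only concerns $R_{u,z}$ with $z\in \textbf{E}_{J,s}^{0,\pm}$, whereas $z$ on the right runs over all of $\textbf{E}_J$ below $y$, including $z\in \textbf{E}_{J,s}^{\pm}$ about which the statement says nothing; and, unlike the first case, there is no vanishing result for the inner induction to establish, since $R_{x,y}$ with $x\in \textbf{E}_{J,s}^{0,+}$ need not vanish ($R_{y,y}=1$). So the step ``allowing the inner induction to proceed as in the first case and yielding the recursions'' is an assertion, not a proof, and the second half of the lemma remains unproved in your write-up. Closing it needs extra input beyond the bar identity for this fixed $s$ --- for example an induction that descends $y$ through a strong descent via the recursion of Lemma 3.3 while tracking how the four classes transform, or some other argument establishing the displayed identity directly.
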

\begin{proof}
If $y\in \textbf{E}_{J,s}^{0,-}$ then
\[
T_s\Gamma_y=-q^{-L(s)}\Gamma_y
\]
Applying involution bar on both sides. On the left hand side we have
\[
\overline{T_s\Gamma_y}=\overline{T_s}\overline{\Gamma_y}=[T_s+(q^{-L(s)}-q^{L(s)}]\sum_{x\in \textbf{E}_J}R_{x, y}\Gamma_x.
\]
while the right hand side is $\overline{-q^{-L(s)}\Gamma_y}=-q^{L(s)}\sum_{x\in \textbf{E}_J}R_{x, y}\Gamma_x$.

Comparing the coefficients of $\Gamma_x$ in the two expressions, we get the result.
The proof for the case $y\in \textbf{E}_{J,s}^{0,+}$ is similar with the above.
\end{proof}

\subsection*{Dual bases $\textbf{C}$ and $\textbf{C}'$}
Recall \cite[Th.4.4]{Yin}
 that the invariants in  $M(\textbf{E}_J, L)$ (respectively $\widetilde{M}(\textbf{E}_J, L)$) form a
free $A$-module with a basis $\{\,\textbf{C}_w\mid w\in \textbf{E}_J\,\}$
(respectively $\{\,\widetilde{\textbf{C}}_w\mid w\in \textbf{E}_J\,\}$   ), where
$\textbf{C}_w\,=\sum\limits_{y \in \textbf{E}_J}\!\!P_{y,w}\Gamma_y$
and
$\widetilde{\textbf{C}}_w=\sum\limits_{y \in \textbf{E}_J}\!\!\widetilde{P}_{y,w}\widetilde{\Gamma}_y$.

Using the map $\theta$, we obtain a dual basis $\{\,\textbf{C}'_w\mid w\in \textbf{E}_J\,\}$  for the
invariants in $M(\textbf{E}_J, L)$ . Analogously, using the map $\eta$ we obtain the dual basis $\{\,\widetilde{\textbf{C}}'_w\mid w\in \textbf{E}_J\,\}$ for the
invariants in $\widetilde{M}(\textbf{E}_J, L)$.

 More precisely, we have:

\begin{proposition}
Let $\textbf{C}'_w=\theta(\widetilde{\textbf{C}}_w)$, $\widetilde{\textbf{C}}'_w=\eta(\textbf{C}_w)$. Then

(a)
The $\Hecke$-module $M(\textbf{E}_J, L)$ has a unique basis
$\{\,\textbf{C}'_w\mid w\in \textbf{E}_J\,\}$ such that $\overline{\textbf{C}'_w}=\textbf{C}'_w$ for all
$w\in \textbf{E}_J$, and $\textbf{C}'_w\,=\sum\limits_{y \in \textbf{E}_J}\!\!\epsilon_y\overline{\widetilde{P}_{y,w}}\Gamma_y.$
for some elements $\widetilde{P}_{y,w}\in A_{\geqslant 0}$ with the following
properties\/\textup{:}
\begin{itemize}
\item[(a1)] $\widetilde{P}_{y,w}=0$ if $y\nleqslant w$\textup{;}
\item[(a2)]
$\widetilde{P}_{w,w}=1$;
\item[(a3)] $\widetilde{P}_{y,w}$ has zero constant term if
$y\neq w$ and
\[
\overline{\widetilde{P}_{y, w}}-\widetilde{P}_{y,w}=\!\!\! \sum_{\substack{y<x\leqslant w\\ x\in \textbf{E}_J}}\overline{\widetilde{R}_{y,x}}\widetilde{P}_{x, w} \text{for any $y<w$}.
\]
(b) Analogously, the module $\widetilde{M}(\textbf{E}_J, L)$ has another basis $\{\,\widetilde{\textbf{C}}'_w\mid w\in \textbf{E}_J\,\}$, where
$\widetilde{\textbf{C}}'_w=\sum\limits_{y \in \textbf{E}_J}\!\!\epsilon_y\overline{P_{y,w}}\widetilde{\Gamma}_y$.
\end{itemize}
\end{proposition}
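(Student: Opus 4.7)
The strategy is to transport the basis $\{\widetilde{\textbf{C}}_w\}$ of $\widetilde{M}(\textbf{E}_J,L)$ supplied by \cite[Th.4.4]{Yin} across the duality map $\theta$, and then enforce bar-invariance to rewrite the resulting expansion in the $\Gamma_y$'s. Since $\Phi$ is an algebra map fixing each $q^{L(s)}$ it acts as the identity on $A$, so $\theta$ is $A$-linear; combined with the formula $\theta(\widetilde{\Gamma}_y)=\epsilon_y\overline{\Gamma_y}$ recorded in the proof of the Theorem-Definition, this yields
\[
\textbf{C}'_w=\theta(\widetilde{\textbf{C}}_w)=\sum_{y\in \textbf{E}_J}\widetilde{P}_{y,w}\,\theta(\widetilde{\Gamma}_y)=\sum_{y\in \textbf{E}_J}\epsilon_y\widetilde{P}_{y,w}\overline{\Gamma_y}.
\]

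Next I would invoke part (a) of the Theorem-Definition, which says that $\eta$, and therefore also $\theta=\eta^{-1}$, commutes with the bar involution. Since $\widetilde{\textbf{C}}_w$ is bar-invariant by \cite[Th.4.4]{Yin}, so is $\textbf{C}'_w$. Applying the $A$-semilinear involution to the displayed expansion and using $\overline{\overline{\Gamma_y}}=\Gamma_y$ produces
\[
\textbf{C}'_w=\overline{\textbf{C}'_w}=\sum_{y\in \textbf{E}_J}\epsilon_y\overline{\widetilde{P}_{y,w}}\,\Gamma_y,
\]
which is the asserted formula. Properties (a1)-(a3) are statements about the polynomials $\widetilde{P}_{y,w}$ themselves rather than about $\textbf{C}'_w$, and they hold in $\widetilde{M}(\textbf{E}_J,L)$ by \cite[Th.4.4]{Yin}: triangularity with respect to $\leqslant$, the normalization $\widetilde{P}_{w,w}=1$, the vanishing of the constant term off diagonal, and the recursion in (a3) obtained by applying bar to $\widetilde{\textbf{C}}_w=\sum_y\widetilde{P}_{y,w}\widetilde{\Gamma}_y$, substituting $\overline{\widetilde{\Gamma}_y}=\sum_x\widetilde{R}_{x,y}\widetilde{\Gamma}_x$, and comparing coefficients.

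Uniqueness of the bar-invariant basis $\{\textbf{C}'_w\}$ with the stated triangularity and degree constraints is the standard Kazhdan-Lusztig argument: two candidates would differ by a bar-invariant element whose coefficients in the $\{\Gamma_y\}$-basis are all in the zero-constant-term subspace of $A_{\geqslant 0}$, and such an element is forced to vanish. Part (b) is obtained by the symmetric construction: applying $\eta$ to $\textbf{C}_w=\sum_y P_{y,w}\Gamma_y$, using $\eta(\Gamma_y)=\epsilon_y\overline{\widetilde{\Gamma}_y}$ and bar-inverting, gives $\widetilde{\textbf{C}}'_w=\sum_y\epsilon_y\overline{P_{y,w}}\widetilde{\Gamma}_y$. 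I expect the main technical point to be the uniqueness step in the unequal-parameter context, which relies on the degree bound $\widetilde{P}_{y,w}\in A_{\geqslant 0}$ with zero constant term off diagonal to rule out nontrivial bar-invariant perturbations; this bound is already furnished by the construction in \cite{Yin}.
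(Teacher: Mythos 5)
Your proof is correct and follows essentially the same route as the paper: transport $\widetilde{\textbf{C}}_w$ across $\theta$ to get $\textbf{C}'_w=\sum_y\epsilon_y\widetilde{P}_{y,w}\overline{\Gamma_y}$, use the commutation of $\theta$ with the bar involution together with $\overline{\widetilde{\textbf{C}}_w}=\widetilde{\textbf{C}}_w$ to conclude $\overline{\textbf{C}'_w}=\textbf{C}'_w$, and then bar-flip the expansion to arrive at $\sum_y\epsilon_y\overline{\widetilde{P}_{y,w}}\Gamma_y$. The paper's version of this argument is terser (it omits the $A$-linearity of $\theta$, the final bar-flip, and the properties (a1)-(a3) and uniqueness discussion you spell out, all of which it implicitly delegates to the already-established structure on $\widetilde{M}$), but the logic is identical.
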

\begin{proof}
\begin{align*}
\textbf{C}'_w\,&=\theta({\sum\limits_{y \in \textbf{E}_J}\!\!\widetilde{P}_{y,w}\widetilde{\Gamma}_y})
=\sum\limits_{y \in \textbf{E}_J}\!\!\epsilon_y\widetilde{P}_{y,w}\overline{\widetilde{\Gamma}_y}
\end{align*}
Hence, $\overline{\textbf{C}'_w}=\overline{\theta(\widetilde{\textbf{C}}_w)}
=\theta(\overline{\widetilde{\textbf{C}}_w})=\theta(\widetilde{\textbf{C}_w})=\textbf{C}'_w$ and the result follows.
\end{proof}

\subsection*{Inversion}
For $y, w\in \textbf{E}_J$, we write the matrix $P=(P_{y, w})$ , where $P_{y, w}$ are $\textbf{E}_J$-relative Kazhdan-Lusztig polynomials.
The formula for $\textbf{C}_w$ in ~\cite[Th.4.4]{Yin} may be written as
\[
\textbf{C}_w=\Gamma_w+\sum_{y\in \textbf{E}_J}P_{y, w}\Gamma_y
\]
and inverting this gives
\[
\Gamma_w=\textbf{C}_w+\sum_{y\in \textbf{E}_J}Q_{y, w}\textbf{C}_y
\]
where the elements $Q_{y, w}$ (defined whenever $y<w$) are given recursively by
\begin{equation}\label{ }
Q_{y, w}=-P_{y, w}-\sum_{z\in \textbf{E}_J\mid y<z<w}Q_{y, z}P_{z, w}
\end{equation}
A $\textbf{E}_J$-chain is a sequence $\zeta: z_0<z_1<\cdots<z_n (n\geqslant 1)$ of elements in
$\textbf{E}_J$, we set $\ell(\zeta)=n$ and $P_\zeta=P_{z_0, z_1}P_{z_1, z_2}\cdots P_{z_{n-1}, z_n}$.
$z_0$ is called the initial element of $\zeta$ and $z_n$ is called the final element of  $\zeta$.
For $y<w$, let $\tau(y, w)$ denote the set of all $\textbf{E}_J$-chains with $y$ as the initial element and $w$ as the final element.

The following results are motivated by Lusztig~\cite[Ch. 10]{Lusztig2}. For the sake of completeness we attach the proofs.
\begin{proposition}For any $y, w\in \textbf{E}_J$ we have
 \[
 Q_{y, w}=\sum_{\zeta\in \tau(y, w)}(-1)^{\ell(\zeta)} P_\zeta
 \]
 We have
 $Q_{y,w}\in A_{\geqslant 0}$ with the following
properties\/\textup{:}
\begin{itemize}
\item[(a1)] $Q_{y,w}=0$ if $y\nleqslant w$\textup{;}
\item[(a2)]
$Q_{w,w}=1$;
\end{itemize}
\end{proposition}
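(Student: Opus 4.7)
The plan is to prove the closed-form chain expression by induction on $\ell(w)-\ell(y)$ using the defining recursion
$$Q_{y,w}=-P_{y,w}-\sum_{\substack{z\in\textbf{E}_J\\ y<z<w}}Q_{y,z}\,P_{z,w},$$
and then read off (a1), (a2) and the $A_{\geqslant 0}$ statement as by-products. The initial datum $Q_{w,w}=1$ is just the $\textbf{C}_w$-coefficient in the inversion $\Gamma_w=\textbf{C}_w+\sum_{y\in\textbf{E}_J}Q_{y,w}\,\textbf{C}_y$, so (a2) is immediate. For (a1), if $y\nleqslant w$ then $P_{y,w}=0$ by the corresponding property of the relative Kazhdan--Lusztig polynomials from \cite[Th.~4.4]{Yin}, while transitivity of the Bruhat order forces the set $\{z\in\textbf{E}_J:y<z<w\}$ to be empty (any such $z$ would give $y\leqslant z\leqslant w$), so the recursion collapses to $Q_{y,w}=0$.

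For the chain formula, assume inductively that $Q_{y,z}=\sum_{\zeta'\in\tau(y,z)}(-1)^{\ell(\zeta')}P_{\zeta'}$ whenever $\ell(z)-\ell(y)<\ell(w)-\ell(y)$. Each chain $\zeta\colon y=z_0<z_1<\cdots<z_n=w$ with $n\geqslant 2$ decomposes uniquely as a shorter chain $\zeta'\colon z_0<\cdots<z_{n-1}$ in $\tau(y,z_{n-1})$ followed by the edge $z_{n-1}<w$, giving $P_\zeta=P_{\zeta'}\,P_{z_{n-1},w}$ and $(-1)^{\ell(\zeta)}=-(-1)^{\ell(\zeta')}$. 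Plugging the inductive hypothesis into the recursion and using this bijection turns the double sum over $z$ and $\zeta'\in\tau(y,z)$ into the chain sum over $\tau(y,w)$ of length at least $2$; the explicit term $-P_{y,w}$ accounts for the unique length-$1$ chain $y<w$. Together this yields the claimed formula.

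Finally, the $A_{\geqslant 0}$ property of $Q_{y,w}$ falls out of the chain formula: each factor $P_{z_{i-1},z_i}$ lies in $A_{\geqslant 0}$, hence so do $P_\zeta$ and any $\Z$-linear combination of such products. The only step requiring any care is the bookkeeping in the chain decomposition, together with the sign flip $(-1)^{\ell(\zeta)}=-(-1)^{\ell(\zeta')}$ that converts the minus sign in the recursion into the alternating sign in the closed form; everything else is forced by the recursion and the known properties of the $P_{y,w}$.
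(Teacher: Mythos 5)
Your proposal is correct and follows essentially the same route as the paper: induction on $\ell(w)-\ell(y)$ via the recursion $Q_{y,w}=-P_{y,w}-\sum_{y<z<w}Q_{y,z}P_{z,w}$, decomposing each chain into a shorter chain plus its final edge, with the sign flip absorbing the minus sign. Your extra remarks deriving (a1), (a2) and the $A_{\geqslant 0}$ property explicitly (emptiness of $\{z: y<z<w\}$ when $y\nleqslant w$, the inversion formula for $Q_{w,w}=1$, and positivity of the $P$-factors) only flesh out what the paper leaves as ``by Eq.~(7)''.
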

\begin{proof}
 If $\ell(w)-\ell(y)=1$, by Eq.(7) we have $Q_{y, w}=-P_{y, w}$. The statement is true.
 Applying induction on $\ell(w)-\ell(y)\geqslant 1$. For any $z\in \textbf{E}_J, y<z<w$, in the sum of Eq.(7)
 we use the induction hypothesis.
 \begin{align*}
 Q_{y, z}=\sum_{\zeta'\in \tau(y, z)}(-1)^{\ell(\zeta')} P_{\zeta'}
 \end{align*}
 We have
 \begin{align*}
 Q_{y, w}&=-P_{y, w}-\sum_{\zeta'\in \tau(y, z)}(-1)^{\ell(\zeta')} P_\zeta P_{z, w}\\
 &=\sum_{\zeta\in \tau(y, w)}(-1)^{\ell(\zeta)} P_\zeta
\end{align*}
where the sequence $\zeta=(y, w) (\in \tau(y, w))$ is with $\ell(\zeta)=1$ and $(\zeta',w) (\in \tau(y, w))$ with the length $\ell(\zeta')+1$.
 The listed properties of $Q's$ are by Eq.(7). The result is proved.
\end{proof}

We define
 \[
 Q'_{y, w}=sgn(y)sgn(w)Q_{y, w}
 \]
 \begin{proposition}
 For any $y, w\in \textbf{E}_J$ we have $\overline{Q'_{y, w}}=\sum_{z;y\leqslant_L z\leqslant_L w}Q'_{y, z}\overline{\widetilde{R}_{z, w}}$
 \end{proposition}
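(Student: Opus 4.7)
The plan is to derive the identity by starting from the inversion formula that defines the $Q$-polynomials, applying the bar involution, and comparing coefficients in the $\mathbf{C}$-basis. The sign twist converting $Q$ into $Q'$ and $R$ into $\widetilde R$ will then be handled by the Corollary stating $\overline{R_{x,y}}=\epsilon_x\epsilon_y\widetilde R_{x,y}$.

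First, I would rewrite the inversion formula in the proposition as
\[
\Gamma_w=\sum_{y\in\textbf{E}_J,\,y\leqslant w}Q_{y,w}\textbf{C}_y,
\qquad Q_{w,w}=1,
\]
and apply the bar involution to both sides. Since each $\textbf{C}_y$ is bar-invariant, the right-hand side becomes $\sum_{y}\overline{Q_{y,w}}\textbf{C}_y$. On the left-hand side I use the defining equation $\overline{\Gamma_w}=\sum_{x\in\textbf{E}_J}R_{x,w}\Gamma_x$, and then re-expand each $\Gamma_x$ in the $\textbf{C}$-basis using the same inversion formula. Interchanging the order of summation gives
\[
\overline{\Gamma_w}=\sum_{y\in\textbf{E}_J}\Bigl(\sum_{x\in\textbf{E}_J}Q_{y,x}R_{x,w}\Bigr)\textbf{C}_y.
\]

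Second, comparing coefficients of $\textbf{C}_y$ on the two expressions for $\overline{\Gamma_w}$ yields the intermediate identity
\[
\overline{Q_{y,w}}=\sum_{x\in\textbf{E}_J}Q_{y,x}R_{x,w},
\]
where only terms with $y\leqslant x\leqslant w$ contribute, since $Q_{y,x}=0$ unless $y\leqslant x$ and $R_{x,w}=0$ unless $x\leqslant w$.

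Third, I would apply the Corollary in the form $R_{x,w}=\epsilon_x\epsilon_w\overline{\widetilde R_{x,w}}$, obtained by applying the bar to $\overline{R_{x,w}}=\epsilon_x\epsilon_w\widetilde R_{x,w}$. Substituting this into the intermediate identity and then multiplying both sides by $\epsilon_y\epsilon_w$, I obtain
\[
\epsilon_y\epsilon_w\overline{Q_{y,w}}
=\sum_{x}(\epsilon_y\epsilon_x Q_{y,x})\,\overline{\widetilde R_{x,w}},
\]
which, by the definition $Q'_{u,v}=\epsilon_u\epsilon_vQ_{u,v}$, is precisely the claimed formula.

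The main obstacle I anticipate is not analytic but bookkeeping: tracking the sign conventions $\epsilon_y\epsilon_w$ through the inversion and the Corollary so that everything collapses correctly, and confirming that restricting the index set to $\{z:y\leqslant_L z\leqslant_L w\}$ in the statement agrees with the natural non-vanishing range of the summands (which is a priori only $y\leqslant z\leqslant w$ in Bruhat order). The latter is harmless because the extra terms vanish, but I would state this reduction explicitly.
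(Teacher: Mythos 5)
Your proof is correct and follows essentially the same route as the paper: the intermediate identity $\overline{Q_{y,w}}=\sum_x Q_{y,x}R_{x,w}$ that you derive element-wise is exactly the matrix relation $\overline{Q}=QR$ the paper deduces from $PQ=1$ and $\overline{P}=\overline{R}P$, and the sign twist via Corollary~3.2 ($\overline{R}=S\widetilde{R}S$, $Q'=SQS$) is the same. The paper merely packages the computation in triangular-matrix notation, which saves the coefficient-comparison bookkeeping you carry out explicitly.
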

 \begin{proof}
 The triangular matrices $Q=(Q_{y, w}), P=(P_{y,w}), R=(R_{y, w})$ are related by
 \[
 PQ=QP=1, \overline{P}=\overline{R}P, \overline{R}R=R\overline{R}=1
 \]
 where the bar involution over a matrix is the matrix obtained by applying $\bar{}$ to each entry.
 We deduce that
 \[
 QP=1=\overline{Q}\overline{P}=\overline{Q}\overline{R}P
 \]
 Multiplying on the right by $Q$ and using the fact $PQ=1$ we deduce $Q=\overline{Q}\overline{R}$. Multiplying on the right by $R$ gives
 \[
 \overline{Q}=QR
 \]
 Let $S$ be the matrix whose $(y, w)$-entry is $sgn(y)\delta_{y,w}$. We have $S^2=1$.  Note that $Q'=SQS$. By Corollary 3.2 we have $\overline{R}=S\widetilde{R}S$.
 Hence
 \[
 \overline{Q'}=\overline{SQS}=S(QR)S=SQS\cdot SRS=Q'\overline{\widetilde{R}}
 \]
 The result follows.
 \end{proof}

\section{$W$-graphs for the modules $\hat{M}$ and $\hat{\widetilde{M}}$ }
 Denote by $M:=M(\textbf{E}_J, L)$ and $\widetilde{M}:==\widetilde{M}(\textbf{E}_J, L)$.
Let $\hat{M}:=Hom_A(M, A)$ and $\hat{\widetilde{M}}:=Hom_A(\widetilde{M}, A)$.

Define an left $\Hecke$-module structure on
$\hat{M}$ by
\[
hf(m)=f(hm) \text{(with $f\in \hat{M}, m\in M, h\in \Hecke$ )}.
\]
 We define a bar operator $\hat{M}\mapsto \hat{M}$
by $\overline{f}(m)=\overline{f(\overline{m})}$ (with $f\in \hat{M}, m\in M$ ); in $\overline{f(\overline{m})}$ the lower bar is that of $M$
and the upper bar is that of $A$.
\[
\overline{h\cdot f}(m)=\overline{hf(\overline{m})}=\overline{f(h\overline{m})}
=\overline{f}(\overline{h\overline{m}})=\overline{f}(\overline{h}m)={\overline{h}} \cdot \overline{f}(m).
\]
Hence we have $\overline{h\cdot f}=\overline{h}\cdot\overline{f}$ for $f\in \hat{M}, h\in \Hecke$.

In the following contexts we focus on the module $\hat{M}$, and usually omit the analogous details for $\hat{\widetilde{M}}$.

If $P$ is a property we set $\delta_P=1$ if $P$ is true and $\delta_P=0$ if $P$ is false. We write $\delta_{x, y}$ instead
of $\delta_{x=y}$.

\subsection*{The basis of $\hat{M}$ } 
We firstly introduce two bases for the module $\hat{M}$.
For any $z\in \textbf{E}_J$ we define
$\hat{\Gamma}_z\in \hat{M}$ by $\hat{\Gamma}_z(\Gamma_w)=\delta_{z, w}$ for any $w\in \textbf{E}_J$. Then
$\hat{B}=:\{\hat{\Gamma}_z; z\in \textbf{E}_J\}$ is an $A$-basis of $\hat{M}$.

Further, for any $z\in \textbf{E}_J$ we define
$D_z\in \hat{M}$ by $D_z(\textbf{C}_w)=\delta_{z, w}$ for any $w\in \textbf{E}_J$. Then
$D:=\{D_z; z\in \textbf{E}_J\}$ is an $A$-basis of $\hat{M}$.

 Obviously we have
\[
D_z=\sum_{y\in \textbf{E}_J, z<y}Q_{z, y}\hat{\Gamma}_y.
\]

An equivalent definition of the basis element $D_w\in \hat{M}$ is
\[
D_z(\Gamma_y)=Q_{z, y}
\]
for all $y\in \textbf{E}_J$. In fact, we have
\[
D_z(\textbf{C}_w)=D_z\sum_{y\in \textbf{E}_J}P_{y,w}\Gamma_y)=\sum_{y\in \textbf{E}_J}Q_{z,y}P_{y, w}=\delta_{z, w}
\]

\begin{lemma}For any $y\in \textbf{E}_J$ we have
\[
\overline{\hat{\Gamma}_y}=\sum_{w\in \textbf{E}_J, y\leqslant w}\overline{R_{y, w}}\hat{\Gamma}_w.
\]
\end{lemma}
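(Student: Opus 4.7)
The plan is a very short direct computation: expand $\overline{\hat{\Gamma}_y}$ in the basis $\hat{B}$ by evaluating it on the dual basis $\{\Gamma_w\}$.

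First I would write $\overline{\hat{\Gamma}_y} = \sum_{w\in \textbf{E}_J} c_w \hat{\Gamma}_w$ for some coefficients $c_w \in A$. Pairing both sides with $\Gamma_w$ and using the duality $\hat{\Gamma}_z(\Gamma_w) = \delta_{z,w}$, we immediately get $c_w = \overline{\hat{\Gamma}_y}(\Gamma_w)$.

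Next I would unfold the definition of the bar operator on $\hat{M}$, namely $\overline{f}(m) = \overline{f(\overline{m})}$, to obtain
\[
c_w \;=\; \overline{\hat{\Gamma}_y}(\Gamma_w) \;=\; \overline{\hat{\Gamma}_y(\overline{\Gamma_w})}.
\]
Now I would substitute the expansion of $\overline{\Gamma_w}$ from the definition of the polynomials $R_{x,y}$ given in Corollary~3.2, namely $\overline{\Gamma_w} = \sum_{x\in \textbf{E}_J} R_{x,w}\Gamma_x$. Because $\hat{\Gamma}_y$ is $A$-linear and $\hat{\Gamma}_y(\Gamma_x) = \delta_{y,x}$, this collapses to $\hat{\Gamma}_y(\overline{\Gamma_w}) = R_{y,w}$, so $c_w = \overline{R_{y,w}}$.

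Finally I would restrict the summation range. The polynomials $R_{y,w}$ are triangular with respect to the Bruhat order; that is, $R_{y,w} = 0$ unless $y \leqslant w$ (this is standard and was established in \cite{Yin} in the present unequal-parameter $W$-graph ideal setting, consistent with the recursion in the lemma preceding Proposition~3.4). Consequently only the terms with $y \leqslant w$ survive, yielding the displayed formula. There is no genuine obstacle: the entire argument is bookkeeping with the dual-basis pairing and the bar involution, plus the triangularity of $R$.
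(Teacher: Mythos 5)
Your proposal is correct and follows essentially the same route as the paper: evaluate $\overline{\hat{\Gamma}_y}$ on the basis elements $\Gamma_x$, unfold the definition $\overline{f}(m)=\overline{f(\overline{m})}$, expand $\overline{\Gamma_x}=\sum_{x'\leqslant x}R_{x',x}\Gamma_{x'}$, and use the pairing $\hat{\Gamma}_y(\Gamma_{x'})=\delta_{y,x'}$ together with the triangularity of the $R$-polynomials to read off the coefficients $\overline{R_{y,w}}$ supported on $y\leqslant w$. No changes needed.
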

\begin{proof}
For any $x\in \textbf{E}_J$ we have
\begin{align*}
\overline{\hat{\Gamma}_y}(\Gamma_x)&=\overline{\hat{\Gamma}_y(\overline{\Gamma_x})}\\
&=\overline{\hat{\Gamma}_y(\sum_{x'\in \textbf{E}_J,x'\leqslant x}R_{x', x}\Gamma_{x'})}=\overline{\delta_{y\leqslant x}R_{y, x}}=\delta_{y\leqslant x}\overline{R_{y, x}}\\
&=\sum_{w\in \textbf{E}_J, y\leqslant w}\overline{R_{y, w}}\hat{\Gamma}_w(\Gamma_x)
\end{align*}
\end{proof}

\begin{thm}\cite[Th.4.7 ]{Yin}\label{thm:4}
The basis elements $\{\textbf{C}_v\mid v\in \textbf{E}_J\}$ give the module $M(\textbf{E}_J, L)$
the structure of a $W\!$-graph module such that
\begin{equation}\label{eq:11}
T_s \textbf{C}_v=
\begin{cases}
q^{L(s)} \textbf{C}_v+ \textbf{C}_{sv}+\!\!\sum\limits_{z\in \textbf{E}_J,sz<z<v}\!\! m_{z,v}^s \textbf{C}_z&
\text{if $s\in SA(v)$,}\\[-5 pt]
-q^{-L(s)}\textbf{C}_v& \text{if $s\in D(v)$},\\[5 pt]
q^{L(s)} \textbf{C}_v + \!\!\sum\limits_{z\in \textbf{E}_J,sz<z<v}\!\! m_{z,v}^s \textbf{C}_z&
\text{if $s\in WA(v)$}.
\end{cases}
\advance\belowdisplayskip5 pt
\end{equation}
\end{thm}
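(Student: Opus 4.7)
The plan is to expand $T_s\textbf{C}_v$ in the bar-invariant basis $\{\textbf{C}_z\mid z\in \textbf{E}_J\}$ and then read off the coefficients case by case according to whether $s$ is a strong/weak ascent or strong/weak descent of $v$. The organizing tool is the uniqueness property already used in \cite{Yin} to define the $\textbf{C}_w$: a bar-invariant element of $M(\textbf{E}_J,L)$ whose $\Gamma$-expansion, relative to a specified leading term, has only strictly-negative-degree corrections is pinned down by that leading term.

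For $s\in D(v)=SD(v)\cup WD(v)$ I would set $X:=(T_s+q^{-L(s)})\textbf{C}_v$. Since $\overline{T_s+q^{-L(s)}}=T_s+q^{-L(s)}$, the element $X$ is bar-invariant. Substituting $\textbf{C}_v=\sum_y P_{y,v}\Gamma_y$ and applying the four-case formula of Definition \ref{def3} to each $T_s\Gamma_y$ produces an explicit $\Gamma$-expansion of $X$. The key computation is that, in the descent case, every coefficient in this expansion lies in $A_{<0}$, using the degree bound on the $P_{y,v}$ together with the recursion they satisfy when $s\in D(v)$. Bar-invariance combined with an $A_{<0}$-expansion in the $\Gamma$-basis forces $X=0$, and hence $T_s\textbf{C}_v=-q^{-L(s)}\textbf{C}_v$.

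For $s\in SA(v)$ I would consider $Y:=T_s\textbf{C}_v-q^{L(s)}\textbf{C}_v-\textbf{C}_{sv}$, and for $s\in WA(v)$ the same expression with the $\textbf{C}_{sv}$ term dropped. In both cases $Y$ is bar-invariant, since $\overline{T_s-q^{L(s)}}=T_s-q^{L(s)}$ and $\textbf{C}_{sv}$ (when present) is bar-invariant. After substituting the $\Gamma$-expansions of $\textbf{C}_v$ and $\textbf{C}_{sv}$ one checks that the $\Gamma_v$- and $\Gamma_{sv}$-coefficients cancel, so $Y$ lies in the $A$-span of $\{\textbf{C}_z\mid z<v\}$. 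Writing $Y=\sum_{z<v}m^s_{z,v}\textbf{C}_z$, bar-invariance of $Y$ forces $\overline{m^s_{z,v}}=m^s_{z,v}$, and the degree analysis of the $\Gamma$-coefficients yields both $q^{L(s)}m^s_{z,v}\in R[\Gamma_{>0}]$ and the vanishing of $m^s_{z,v}$ unless $sz<z$; these are exactly the $W$-graph axioms.

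The main obstacle is the weak-ascent case $s\in WA(v)$, where the formula $T_s\Gamma_w=q^{L(s)}\Gamma_w-\sum_z r^s_{z,w}\Gamma_z$ for $w\in \textbf{E}_{J,s}^{0,+}$ contributes extra terms through the auxiliary polynomials $r^s_{z,w}\in q^{L(s)}A_{>0}$. One has to show that, when these contributions are combined with the ones coming from $\textbf{E}_{J,s}^-$ and $\textbf{E}_{J,s}^+$, the resulting coefficients still obey the degree bound $q^{L(s)}m^s_{z,v}\in R[\Gamma_{>0}]$ rather than merely lying in $A$. This is where the recursive definition of $P_{y,v}$ (which itself absorbs the $r^s_{z,w}$) and the ideal hypothesis on $\textbf{E}_J$ are both genuinely used; the remaining cases, by contrast, are essentially formal once the bar-invariance and unitriangularity framework is in place.
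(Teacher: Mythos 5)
The paper does not reproduce the argument for this theorem; it cites it from~\cite[Th.\ 4.7]{Yin}, so I am judging your proposal on its own terms. First, a convention issue you should watch: in this paper the $\textbf{C}$-basis has $\textbf{C}_w=\Gamma_w+\sum_{y<w}P_{y,w}\Gamma_y$ with $P_{y,w}\in A_{>0}$ (zero constant term) for $y<w$, consistent with $\widetilde{P}_{y,w}\in A_{>0}$ in Proposition~3.6 and $Q_{z,y}\in A_{>0}$ in Theorem~4.4, and consistent with the eigenvalue $-q^{-L(s)}$ in the descent line (for instance one reads off $P_{sv,v}=-q^{L(s)}$). Throughout you write $A_{<0}$, which would match $\textbf{C}'$, not $\textbf{C}$. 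With the sign flipped, your treatment of the ascent cases is sound: $T_s-q^{L(s)}$ is bar-invariant, the $\Gamma_{sv}$-coefficient of $Y$ cancels identically, and the coefficients of $\Gamma_y$ in $Y$ lie in $A_{>0}$ because every source term carries a factor $P_{\ast,\ast}\in A_{>0}$, $q^{L(s)}$, or $r^s_{z,w}\in q^{L(s)}A_{>0}$; unitriangularity then forces $Y=\sum_{z<v}m^s_{z,v}\textbf{C}_z$ with bar-invariant $m^s_{z,v}$, and the bound $q^{L(s)}m^s_{z,v}\in R[\Gamma_{>0}]$ together with the vanishing condition $sz<z$ drop out of the same degree book-keeping.

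The genuine gap is in the descent case. Setting $X=(T_s+q^{-L(s)})\textbf{C}_v$ is the right object, and $X$ is indeed bar-invariant, but it is not true on its face that every $\Gamma$-coefficient of $X$ lies in $A_{>0}$. For $z\in\textbf{E}_{J,s}^+$ with $z<v$, the coefficient of $\Gamma_z$ in $X$ is $q^{-L(s)}P_{z,v}+P_{sz,v}-\sum_{y}r^s_{z,y}P_{y,v}$, and the first term lives only in $A_{>-L(s)}$: nothing in the a priori degree bound on $P_{z,v}$ prevents it from having a term of degree at most $L(s)$. The ``recursion the $P_{y,v}$ satisfy when $s\in D(v)$'' that you invoke to rescue this is not independently available --- a recursion of that shape is exactly the coefficient-by-coefficient restatement of the multiplication formula you are trying to prove, so the argument as sketched is circular. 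The standard repair (and what one finds in Lusztig~\cite[Ch.\ 6]{Lusztig2} and in the parabolic analogues) is to treat the descent case by induction on $\ell(v)$: prove the ascent formula for $\textbf{C}_{sv}$ first, apply $T_s$ to both sides and use $T_s^2=1+(q^{L(s)}-q^{-L(s)})T_s$, and then absorb the correction term $\sum m^s_{z,sv}(T_s+q^{-L(s)})\textbf{C}_z$ using the inductive hypothesis $T_s\textbf{C}_z=-q^{-L(s)}\textbf{C}_z$ for each $z$ with $sz<z<sv$. You should replace the direct degree argument for the descent case by this induction; the ascent cases you have are exactly what feeds it.
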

\begin{thm}
The $\Hecke$-module $\hat{M}(\textbf{E}_J, L)$ has a unique basis
$\{\,D_z\mid z\in \textbf{E}_J\,\}$ such that $\overline{D_z}=D_z$ for all
$z\in \textbf{E}_J$, and $D_z\,=\sum\limits_{y \in \textbf{E}_J}\!\!Q_{z,y}\hat{\Gamma}_y.$
for some elements $Q_{z, y}\in A_{\geqslant 0}$ with the following
properties\/\textup{:}
\begin{itemize}
\item[(a1)] $Q_{z,y}=0$ if $z\nleqslant y$\textup{;}
\item[(a2)]
$P_{z,z}=1$;
\item[(a3)] $Q_{z,y}$ has zero constant term if
$z\neq y$ and
\[
Q_{z, y}-\overline{Q_{z,y}}=\!\!\! \sum_{\substack{z\leqslant x<y\\ x\in \textbf{E}_J}}\overline{Q_{z,x}}R_{x, y} \text{for any $z<y$}.
\]
\end{itemize}
\end{thm}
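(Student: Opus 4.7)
The subsection preceding the theorem has already supplied the candidate: $D_z\in\hat{M}$ is defined by $D_z(\textbf{C}_w)=\delta_{z,w}$, and the expansions $D_z=\sum_{y\in\textbf{E}_J}Q_{z,y}\hat{\Gamma}_y$ and $D_z(\Gamma_y)=Q_{z,y}$ have been recorded. So the substance of my proof is: (i) bar-invariance of $D_z$; (ii) the listed properties of the coefficients $Q_{z,y}$; and (iii) uniqueness of the basis so characterised.

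For (i) I would exploit the bar-invariance of the $\textbf{C}_w$ furnished by Theorem~\ref{thm:4}. Since the bar involution on $\hat{M}$ was defined by $\overline{f}(m)=\overline{f(\overline{m})}$, one simply computes
\[
\overline{D_z}(\textbf{C}_w)=\overline{D_z(\overline{\textbf{C}_w})}=\overline{D_z(\textbf{C}_w)}=\overline{\delta_{z,w}}=\delta_{z,w}=D_z(\textbf{C}_w)
\]
for every $w\in\textbf{E}_J$; as $\{\textbf{C}_w\}$ is an $A$-basis of $M$, this forces $\overline{D_z}=D_z$. For (ii), the vanishing (a1) and the normalisation (a2) drop straight out of the closed formula $Q_{z,y}=\sum_{\zeta\in\tau(z,y)}(-1)^{\ell(\zeta)}P_\zeta$ of Proposition~3.6, since no chain exists when $z\nleqslant y$ and the chain of length zero accounts for $Q_{z,z}=1$; the zero constant term for $z<y$ is inherited from the same property of each factor $P_{u,v}$. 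For the recurrence (a3) I would apply $\overline{D_z}=D_z$ using Lemma~4.1 to expand $\overline{\hat{\Gamma}_y}=\sum_{w\geqslant y}\overline{R_{y,w}}\hat{\Gamma}_w$, then read off the coefficient of each $\hat{\Gamma}_y$ and isolate the diagonal term $R_{y,y}=1$. Equivalently one may just transcribe the $(z,y)$-entry of the matrix identity $Q=\overline{Q}\cdot\overline{R}$ established inside the proof of Proposition~3.7.

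For (iii) I would give the standard Kazhdan--Lusztig-style uniqueness argument. Suppose $D'_z$ is a second bar-invariant element of the form $\hat{\Gamma}_z+\sum_{y>z}a_{z,y}\hat{\Gamma}_y$ with each $a_{z,y}\in A_{\geqslant 0}$ of zero constant term. Then $E:=D_z-D'_z=\sum_y c_y\hat{\Gamma}_y$ is bar-invariant, every $c_y$ lies in $A_{\geqslant 0}$ with zero constant term, and $c_z=0$. If $E\neq 0$, pick a Bruhat-minimal element $y^{*}$ in the support of $E$; because $\overline{\hat{\Gamma}_{y^{*}}}=\hat{\Gamma}_{y^{*}}+(\textrm{higher terms})$, the coefficient of $\hat{\Gamma}_{y^{*}}$ in $\overline{E}$ is just $\overline{c_{y^{*}}}$, whence $c_{y^{*}}=\overline{c_{y^{*}}}$. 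Lying in $A_{\geqslant 0}\cap A_{\leqslant 0}$ with zero constant term, $c_{y^{*}}$ must vanish—contradicting the choice of $y^{*}$.

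The main technical obstacle is the bar/triangularity bookkeeping in (a3) and in the uniqueness step: Lemma~4.1 expands $\overline{\hat{\Gamma}_y}$ \emph{upward} in the Bruhat order, opposite to the behaviour of $\overline{\Gamma_y}$ inside $M$, so one has to be careful to use the correct direction when reading off the recurrence and when choosing the Bruhat-minimal element in the uniqueness argument. Once this direction is pinned down the calculations are routine and follow the template already used for the $\textbf{C}_w$ basis and for Proposition~3.7.
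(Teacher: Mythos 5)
Your proof is correct, but it takes a genuinely different route from the paper's. The paper does not argue through the dual-basis property at all: its (sketched) proof imitates \cite[Th.~5.2]{Lusztig2}, constructing the coefficients $Q_{z,y}$ by induction on $\ell(y)-\ell(z)$ and using the matrix identity $\overline{Q}=QR$ established in Proposition~3.8 together with the anti-invariance $\overline{f}=-f$ of the sum appearing in (a3) to solve for each new coefficient, so that existence, bar-invariance and uniqueness all come out of one induction. You instead take the functionals $D_z$ already defined as the dual basis of the $\textbf{C}$-basis and get bar-invariance in one line from $\overline{\textbf{C}_w}=\textbf{C}_w$ (note this invariance is the content recalled from \cite[Th.~4.4]{Yin} at the start of the subsection on dual bases, rather than of Theorem~4.2, which only records the $T_s$-action); you then import (a1)--(a3) from the chain formula of Proposition~3.7 and from $\overline{Q}=QR$, and finish with the standard triangularity argument for uniqueness, where your choice of a Bruhat-\emph{minimal} element of the support is indeed the right one because Lemma~4.1 expands $\overline{\hat{\Gamma}_y}$ upward. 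What the paper's induction buys is a self-contained recursive construction of the $Q_{z,y}$ in the Kazhdan--Lusztig template, independent of the inversion machinery of Section~3; what your argument buys is brevity and a conceptual reason for the bar-invariance of $D_z$, namely duality with a bar-invariant basis. One small point of bookkeeping: your direct verification (and likewise transcription of $Q=\overline{Q}\,\overline{R}$) yields $Q_{z,y}-\overline{Q_{z,y}}=\sum_{z\leqslant x<y}\overline{Q_{z,x}}\,\overline{R_{x,y}}$, equivalently $\overline{Q_{z,y}}-Q_{z,y}=\sum_{z\leqslant x<y}Q_{z,x}R_{x,y}$, so the factor $R_{x,y}$ in the printed (a3) should carry a bar; this is a typo in the statement (also present in the paper's sketch of $f$) rather than a defect of your argument, whose form of the recurrence is the correct one.
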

The proof is very similar to that of \cite[Th. 5.2]{Lusztig2} or \cite[Section 2]{Lusztig1}. It uses induction on $\ell(w)-\ell(y)$,
the equation $\overline{Q}=QR$ in Proposition 3.8, and the fact: If $f=\!\!\!\sum_{\substack{z\leqslant x<y\\ y\in \textbf{E}_J}}\overline{Q_{z,x}}R_{x, y}$ then
$\overline{f}=-f$. We omit further details of the proof.

The (left) ascent set of $z\in \textbf{E}_J$ is
\[
A(z)=\{s\in S\mid z\in {\textbf{E}_{J,s}^+}\cup {\textbf{E}_{J,s}^{0,+}}\}
\]

\begin{thm}
Let $s\in S$ and assume that $L(s)>0$. 
 The basis elements
 \[
 \{D_z\mid z\in \textbf{E}_J\}
 \]
  give $\hat{M}$
the structure of a $W\!$-graph module such that
\begin{equation}
T_s D_z=
\begin{cases}
-q^{-L(s)}D_z+ D_{sz}+\!\!\sum\limits_{z<u,s\in A(u)}\!\! m_{z,u}^s D_u&
\text{if $s\in SD(z)$,}\\[3 pt]
q^{L(s)}D_z&
\text{if $s\in A(z)$,}\\[3 pt]
-q^{-L(s)}D_z+\!\!\sum\limits_{z<u, s\in A(u)}\!\! m_{z,u}^s D_u&
\text{if $s\in WD(z)$,}
\end{cases}
\advance\belowdisplayskip5 pt
\end{equation}
\end{thm}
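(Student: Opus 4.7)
The plan is to compute $T_s D_z$ by duality, pairing against the basis $\{\textbf{C}_w\}_{w \in \textbf{E}_J}$ whose $T_s$-action is already known by Theorem~\ref{thm:4}. Since $\{D_u\}_{u \in \textbf{E}_J}$ is an $A$-basis of $\hat{M}$, expanding $T_s D_z = \sum_u c_u D_u$ and evaluating at $\textbf{C}_w$ gives $c_w = (T_s D_z)(\textbf{C}_w)$; by the $\Hecke$-module structure on $\hat{M}$ this equals $D_z(T_s \textbf{C}_w)$, which is just the coefficient of $\textbf{C}_z$ in the expansion of $T_s \textbf{C}_w$ in the $\textbf{C}$-basis.

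Next I would enumerate, for fixed $z$ and $s$, the $w \in \textbf{E}_J$ for which the coefficient of $\textbf{C}_z$ in $T_s \textbf{C}_w$ is non-zero, using the three cases of Theorem~\ref{thm:4}. The only possibilities are: (i) $w=z$, contributing $q^{L(s)}$ if $s\in A(z)$ and $-q^{-L(s)}$ if $s\in D(z)$; (ii) $w = sz$, which is a valid element of $\textbf{E}_J$ with $s\in SA(w)$ precisely when $s\in SD(z)$, contributing the constant $1$; and (iii) $w > z$ with $s\in A(w) = SA(w)\cup WA(w)$ and $sz<z<w$, contributing $m^s_{z,w}$. Note that conditions (ii) and (iii) both require $sz<z$, i.e.\ $s\in SD(z)$.

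A case split on $s$ at $z$ then assembles the formula. For $s\in SD(z)$ all three contributions are present, yielding $T_s D_z = -q^{-L(s)} D_z + D_{sz} + \sum_{z<u,\,s\in A(u)} m^s_{z,u} D_u$. For $s \in A(z)$ we have $sz>z$, so (ii) and (iii) vanish and only $c_z = q^{L(s)}$ survives. For $s \in WD(z)$ we again have $sz>z$, and moreover $sz \notin D_J \supseteq \textbf{E}_J$ kills (ii); thus only $c_z = -q^{-L(s)}$ survives, and the sum $\sum_{z<u,\,s\in A(u)} m^s_{z,u} D_u$ appearing in the theorem is vacuously zero by the convention from Theorem~\ref{thm:4}. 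The $W$-graph axioms (bar-invariance of $m^s_{z,u}$ and $q^{L(s)} m^s_{z,u} \in R[\Gamma_{>0}]$) are inherited directly from Theorem~\ref{thm:4}, and the representation property for $T_s \mapsto \rho_s$ is automatic since the action on $\hat{M}$ comes from an $\Hecke$-module structure.

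The main subtlety to address up front is the convention that $(hf)(m) = f(hm)$ defines a genuine \emph{left} $\Hecke$-action on $\hat{M}$; this is compatible with left multiplication only when interpreted via the standard anti-involution $T_w \mapsto T_{w^{-1}}$ of $\Hecke$, which fixes each generator $T_s$. Under this reading the identity $(T_s D_z)(\textbf{C}_w) = D_z(T_s \textbf{C}_w)$ is valid, and the rest of the argument is bookkeeping. Thus the only real obstacle is not a calculation but ensuring that the pairing identity is justified; once this is fixed, the case analysis above completes the proof.
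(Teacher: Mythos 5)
Your strategy is exactly the one the paper uses: expand $T_sD_z$ in the $D$-basis, extract the coefficient of $D_w$ as $(T_sD_z)(\textbf{C}_w)=D_z(T_s\textbf{C}_w)$, and feed in Theorem~\ref{thm:4} together with $D_z(\textbf{C}_w)=\delta_{z,w}$; the paper carries this out explicitly for $s\in SD(z)$ and declares the remaining cases similar. Your observation that $(hf)(m)=f(hm)$ is a left action only via the anti-involution fixing each $T_s$ is a legitimate point which the paper glosses over, and your $SD(z)$ and $A(z)$ cases are correct.

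There is, however, a genuine gap in your $WD(z)$ case. You claim that contribution (iii) forces $sz<z$, hence that for $s\in WD(z)$ the sum $\sum_{z<u,\,s\in A(u)}m^s_{z,u}D_u$ is ``vacuously zero'' and $T_sD_z=-q^{-L(s)}D_z$. This rests on reading the index set ``$z\in\textbf{E}_J,\ sz<z<v$'' in Theorem~\ref{thm:4} literally as $\ell(sz)<\ell(z)$; but in the source of that theorem (Howlett--Nguyen, Theorem 5.3, and its weighted version in the prequel) the sum runs over all $z<v$ with $s\in D(z)=SD(z)\cup WD(z)$ -- the $W$-graph structure with $I(z)=D(z)$ requires precisely this -- and the coefficients attached to weak descents are in general nonzero, which is exactly why the statement you are proving carries the sum in its $WD(z)$ case. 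A concrete check: take $W=S_3=\langle s_1,s_2\rangle$ with equal parameters, $J=\{s_2\}$ and $\textbf{E}_J=D_J=\{1,s_1,s_2s_1\}$ viewed as a $W$-graph ideal with respect to $J$, so that $T_{s_2}\Gamma_1=-q^{-1}\Gamma_1$. Then $\textbf{C}_{s_1}=\Gamma_{s_1}-q\Gamma_1$ and $\textbf{C}_{s_2s_1}=\Gamma_{s_2s_1}-q\Gamma_{s_1}+q^2\Gamma_1$, and one computes $T_{s_2}\textbf{C}_{s_1}=\Gamma_{s_2s_1}+\Gamma_1=q\textbf{C}_{s_1}+\textbf{C}_{s_2s_1}+\textbf{C}_1$, so $m^{s_2}_{1,s_1}=1$ even though $s_2\in WD(1)$ and $s_2\cdot 1>1$. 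Dually, $T_{s_2}D_1=-q^{-1}D_1+D_{s_1}$, not $-q^{-1}D_1$ as your argument concludes. The repair is simple: drop the assertion that (ii) and (iii) both require $s\in SD(z)$; only (ii) does. With the correct index set, the $w$ contributing through the $m$-sum are all $w>z$ with $s\in A(w)$ and $s\in D(z)$, and the $WD(z)$ case then assembles exactly like your $SD(z)$ case with the $D_{sz}$ term absent, giving the stated formula.
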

\begin{proof}
In the case $s\in SD(z)$, $T_s D_z(\textbf{C}_w)=D_z(T_s\textbf{C}_w)$ gives
\begin{equation*}
T_s D_z(\textbf{C}_w)=
\begin{cases}
D_z(q^{L(s)} \textbf{C}_w+ \textbf{C}_{sw}+\!\!\sum\limits_{x\in \textbf{E}_J,sx<x<w}\!\! m_{x,w}^s \textbf{C}_x)
&\text{if $s\in SA(w)$,}\\[3 pt]

D_z(-q^{-L(s)} \textbf{C}_w)&\text{if $s\in D(w)$,}\\[3 pt]

D_z(q^{L(s)} \textbf{C}_w+\!\!\sum\limits_{x\in \textbf{E}_J,sx<x<w}\!\! m_{x,w}^s \textbf{C}_x)
&\text{if $s\in WA(w)$,}
\end{cases}\\
\end{equation*}

\begin{equation*}
=
\begin{cases}
\delta_{z,sw}+\!\!\sum\limits_{x\in \textbf{E}_J,sx<x<w}\!\! m_{x,w}^s \delta_{z,x}&\text{if $s\in SA(w)$,}\\[3 pt]
-q^{-L(s)}\delta_{z,w}&\text{if $s\in SD(w)$,}\\[3 pt]
0&\text{if $s\in WD(w)$,}\\[3 pt]
\!\!\sum\limits_{x\in \textbf{E}_J,sx<x<w}\!\! m_{x,w}^s \delta_{z,x}&\text{if $s\in WA(w)$,}
\end{cases}
\end{equation*}
\begin{equation*}
=
\begin{cases}
\delta_{z,sw}+\!\! m_{z,w}^s \delta_{z<w}&\text{if $s\in SA(w)$,}\\[3 pt]
-q^{-L(s)}\delta_{z,w}&\text{if $s\in SD(w)$,}\\[3 pt]
0&\text{if $s\in WD(w)$,}\\[3 pt]
 m_{z,w}^s \delta_{z<w}&\text{if $s\in WA(w)$,}
\end{cases}
\end{equation*}
\begin{equation*}
=
\begin{cases}
(D_{sz}+\!\!\sum\limits_{z<u, u\in \textbf{E}_{J,s}^+}\!\! m_{z,u}^s D_u)(\textbf{C}_w)&\text{if $s\in SA(w)$,}\\[3 pt]
-q^{-L(s)}D_z(\textbf{C}_w)&\text{if $s\in SD(w)$,}\\[3 pt]
0&\text{if $s\in WD(w)$,}\\[3 pt]
 \sum\limits_{z<u, u\in \textbf{E}_{J,s}^{0,+}}\!\! m_{z,u}^s D_u(\textbf{C}_w)  &\text{if $s\in WA(w)$,}
\end{cases}
\advance\belowdisplayskip5 pt
\end{equation*}
Hence, we obtain
\[
T_sD_z(\textbf{C}_w)=(-q^{-L(s)}D_z+ D_{sz}+\!\!\sum\limits_{z<u,s\in A(u)}\!\! m_{z,u}^s D_u)(\textbf{C}_w)
\]
foe all $w\in \textbf{E}_J$. The desired formula follows in this case.

In other cases the computation is similar with the above, we omit the details.
\end{proof}
The following is by \cite[Prop.4.8]{Yin}.
\begin{corollary}
For $s\in S$ with $L(s)=0$, $z\in \textbf{E}_J$, we have
\[
T_sD_z=
\begin{cases}
D_{sz}&\text{if $s\in SD(z)$ or $s\in SA(z)$,}\\[3 pt]
-D_z&\text{if $s\in WD(z)$,}\\[3 pt]
D_z&\text{if $s\in WA(z)$,}
\end{cases}
\]
\end{corollary}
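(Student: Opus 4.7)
The plan is to exploit duality exactly as in the proof of the preceding theorem: since $\{D_z\}$ and $\{\textbf{C}_w\}$ are dual bases, checking the claimed expansion of $T_sD_z$ is equivalent to checking that the function $w\mapsto T_sD_z(\textbf{C}_w)=D_z(T_s\textbf{C}_w)$ agrees with the function $w\mapsto(\text{RHS})(\textbf{C}_w)$ for every $w\in \textbf{E}_J$. So the whole proof reduces to combining two inputs: the dual-pairing definition, and an explicit formula for $T_s\textbf{C}_w$ when $L(s)=0$. The latter is provided by \cite[Prop.~4.8]{Yin}; specializing it to $L(s)=0$ yields
\[
T_s\textbf{C}_w=\begin{cases} \textbf{C}_{sw} & s\in SA(w)\cup SD(w),\\ \textbf{C}_w & s\in WA(w),\\ -\textbf{C}_w & s\in WD(w),\end{cases}
\]
so that $D_z(T_s\textbf{C}_w)$ equals $\delta_{z,sw}$, $\delta_{z,w}$, or $-\delta_{z,w}$ in the three respective situations.

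Next I would run through the four cases for $s$ relative to $z$. For $s\in SD(z)$: then $sz<z$ and $sz\in\textbf{E}_J$, so $sz\in \textbf{E}_{J,s}^+$, i.e.\ $s\in SA(sz)$. The only $w$ giving a nonzero contribution is $w=sz$ (from the "strong" branch) or $w=z$ (from the "weak" branches); but $s\in SD(z)$ excludes $s\in WA(z)\cup WD(z)$, ruling out $w=z$. Hence $T_sD_z(\textbf{C}_w)=\delta_{sz,w}=D_{sz}(\textbf{C}_w)$, as claimed. The case $s\in SA(z)$ is the mirror image: $sz\in\textbf{E}_{J,s}^-$ so $s\in SD(sz)$, and the same bookkeeping gives $T_sD_z=D_{sz}$. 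For $s\in WA(z)$: now $sz\notin\textbf{E}_J$, so the "strong" possibility $w=sz$ cannot occur; the only surviving case is $w=z$ with $s\in WA(w)$, producing $+\delta_{z,w}$, so $T_sD_z=D_z$. The case $s\in WD(z)$ is identical up to a sign, yielding $-D_z$.

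The only real obstacle is bookkeeping: one must verify that for each choice of class of $s$ relative to $z$, exactly one $w\in\textbf{E}_J$ contributes a nonzero value and that this $w$ sits in the $s$-class matching the correct branch of the formula for $T_s\textbf{C}_w$. The key compatibilities that make this go through are the involutive symmetry $s\in SD(z)\Leftrightarrow s\in SA(sz)$ (and its analogue), together with the fact that $SA, SD, WA, WD$ partition $S$ for each element, so no $w$ is double-counted and no spurious contribution appears. After these checks the four-case formula falls out immediately, and the proof is complete.
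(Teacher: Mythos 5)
Your proof is correct and matches what the paper's terse citation implies: the paper disposes of the $L(s)=0$ case by invoking Prop.\ 4.8 of \cite{Yin} (the $L(s)=0$ analogue of the $T_s\textbf{C}_w$ multiplication rule) and leaving the reader to transfer it to the $D$-basis via the pairing $D_z(\textbf{C}_w)=\delta_{z,w}$, exactly as was done in full for $L(s)>0$ in the preceding theorem. Your case-by-case bookkeeping — noting that only $w=sz$ (via the strong branch) or $w=z$ (via the weak branches) can contribute, and that the partition of $S$ into $SA,SD,WA,WD$ together with the flip $s\in SD(z)\Leftrightarrow s\in SA(sz)$ rules out double-counting — is precisely the computation the paper is implicitly relying on.
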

\subsection*{The $D'$-basis for $M$}
\begin{thm}
The $\Hecke$-module $\hat{M}(\textbf{E}_J, L)$ has a unique basis
$\{\,{D}'_z\mid z\in \textbf{E}_J\,\}$ such that $\overline{{D}'_z}={D}'_z$ for all
$z\in \textbf{E}_J$, and ${D}'_z\,=\sum\limits_{y \in \textbf{E}_J}\!\!\epsilon_y\overline{\widetilde{Q}_{z,y}}\hat{\Gamma}_y.$
where $\widetilde{Q}_{z, y}\in A_{\geqslant 0}$, are the analogous elements in the case of $\widetilde{M}$.
\begin{equation}
T_s {D}'_z=
\begin{cases}
q^{L(s)}{D}'_z+ {D}'_{sz}+\!\!\sum\limits_{z<u,s\in A(u)}\!\! m_{z,u}^s {D}'_u&
\text{if $s\in SD(z)$,}\\[3 pt]
-q^{-L(s)}{D}'_z&
\text{if $s\in A(z)$,}\\[3 pt]
q^{L(s)}{D}'_z+\!\!\sum\limits_{z<u, s\in A(u)}\!\! m_{z,u}^s {D}'_u&
\text{if $s\in WD(z)$,}
\end{cases}
\advance\belowdisplayskip5 pt
\end{equation}

\end{thm}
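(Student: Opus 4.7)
The plan is to mirror the construction of the $D$-basis for $\hat{M}$ in the preceding theorem, using now the dual module $\widetilde{M}$ and its inverse Kazhdan-Lusztig polynomials $\widetilde{Q}_{z,y}$. I would take the stated expansion $D'_z=\sum_{y}\epsilon_y\overline{\widetilde{Q}_{z,y}}\hat{\Gamma}_y$ as the definition of $D'_z$. By the $\widetilde{M}$-analog of the combinatorial formula $Q_{y,w}=\sum_{\zeta\in\tau(y,w)}(-1)^{\ell(\zeta)}P_\zeta$ in Section~3, one has $\widetilde{Q}_{z,y}=0$ unless $z\leqslant y$ and $\widetilde{Q}_{z,z}=1$, so the transition matrix from $\{\hat{\Gamma}_y\}$ to $\{D'_z\}$ is upper triangular with $\epsilon_z$ on the diagonal; hence $\{D'_z\}$ is an $A$-basis of $\hat{M}$.

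For bar-invariance I apply Lemma~4.1, $\overline{\hat{\Gamma}_y}=\sum_{w}\overline{R_{y,w}}\hat{\Gamma}_w$, and substitute $\overline{R_{y,w}}=\epsilon_y\epsilon_w\widetilde{R}_{y,w}$ from Corollary~3.2. The signs cancel, so that the coefficient of $\epsilon_w\hat{\Gamma}_w$ in $\overline{D'_z}$ becomes $\sum_{y}\widetilde{Q}_{z,y}\widetilde{R}_{y,w}$. The matrix identity $\overline{\widetilde{Q}}=\widetilde{Q}\widetilde{R}$ (the $\widetilde{M}$-analog of the identity $\overline{Q}=QR$ extracted in the proof of Proposition~3.8) collapses this to $\overline{\widetilde{Q}_{z,w}}$, yielding $\overline{D'_z}=D'_z$. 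Uniqueness is automatic from the explicit formula; alternatively one could recover it by the standard Lusztig induction on $\ell(y)-\ell(z)$, as in the proof of the preceding theorem on the $D$-basis.

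For the action formulas I would compute $T_sD'_z$ by pairing against the dual basis $\{\textbf{C}'_w\}$ of $M$. Using the expansion $\textbf{C}'_w=\sum_{y}\epsilon_y\overline{\widetilde{P}_{y,w}}\Gamma_y$ together with the matrix-inverse relation between $\widetilde{P}$ and $\widetilde{Q}$, one first verifies $D'_z(\textbf{C}'_w)=\delta_{z,w}$, so that $(T_sD'_z)(\textbf{C}'_w)=D'_z(T_s\textbf{C}'_w)$. Since $\textbf{C}'_w=\theta(\widetilde{\textbf{C}}_w)$ and $\theta$ is $\Phi$-linear, $T_s\textbf{C}'_w=\theta(\Phi(T_s)\widetilde{\textbf{C}}_w)$ with $\Phi(T_s)=-T_s+q^{L(s)}-q^{-L(s)}$. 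Substituting the $\widetilde{M}$-analog of the action of $T_s$ on $\widetilde{\textbf{C}}_w$ and splitting according to whether $s\in SA(w),\ D(w),\ WA(w)$, one reorganizes the outcome by the type of $s$ relative to $z$ (i.e.\ $SD(z),\ A(z),\ WD(z)$) to obtain the asserted three-case formula.

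The main obstacle will be matching the structure coefficients $m^s_{z,u}$ produced from the $\widetilde{M}$-side computation with the ones appearing in the earlier theorem for $T_s\textbf{C}_v$ on $M$. This reduces to the duality relation $\widetilde{P}_{y,w}=\epsilon_y\epsilon_w\overline{P_{y,w}}$, obtained by applying $\eta$ to $\textbf{C}_w=\sum_y P_{y,w}\Gamma_y$ together with $\eta(\Gamma_w)=\epsilon_w\overline{\widetilde{\Gamma}_w}$; this ensures that the leading-coefficient extractions defining the $m^s_{z,u}$ agree on both sides. Once this compatibility is in place, the three-case computation transcribes verbatim from the proof of the preceding theorem on the $D$-basis.
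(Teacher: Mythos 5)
The paper itself gives no proof of this theorem, so the comparison can only assess whether your argument would close the gap. Most of your plan is sound: the triangularity of the transition matrix from $\{\hat{\Gamma}_y\}$ to $\{D'_z\}$, the bar-invariance computation via Lemma~4.1, Corollary~3.2 and the matrix identity $\overline{\widetilde{Q}}=\widetilde{Q}\widetilde{R}$, and the pairing $D'_z(\textbf{C}'_w)=\delta_{z,w}$ are all correctly set up.

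The genuine gap is in the final step where you claim the ``duality relation'' $\widetilde{P}_{y,w}=\epsilon_y\epsilon_w\overline{P_{y,w}}$. This cannot hold: for $y<w$ both $P_{y,w}$ and $\widetilde{P}_{y,w}$ lie in $A_{>0}$ (non-negative degrees, zero constant term), so $\overline{P_{y,w}}\in A_{<0}$, which is incompatible with $\widetilde{P}_{y,w}\in A_{>0}$. Your derivation of it is also flawed: applying $\eta$ to $\textbf{C}_w=\sum_y P_{y,w}\Gamma_y$ gives $\eta(\textbf{C}_w)=\widetilde{\textbf{C}}'_w=\sum_y\epsilon_y\overline{P_{y,w}}\widetilde{\Gamma}_y$ (this is exactly Proposition~3.6(b)), which is the \emph{dual} basis of $\widetilde{M}$, not the $\widetilde{\textbf{C}}$-basis whose coefficients are $\widetilde{P}_{y,w}$. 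Consequently, the $\widetilde{M}$-side structure coefficients $\widetilde{m}^s_{z,u}$ that your pairing argument actually produces cannot be identified with the $m^s_{z,u}$ of Theorem~4.2 by this route; the asserted agreement of the $m$'s (and the associated index sets, which on the $\widetilde{M}$ side involve $SD(z)\cup WA(z)$ and $SA(u)\cup WD(u)$ rather than $D(z)$ and $A(u)$) is left unjustified. Moreover, if one carries your pairing computation through honestly, noting that $T_s\textbf{C}'_w=\theta(\Phi(T_s)\widetilde{\textbf{C}}_w)$ with $\Phi(T_s)=-T_s+q^{L(s)}-q^{-L(s)}$, the coefficient of $D'_{sz}$ in the $s\in SD(z)$ case comes out as $-1$, not $+1$; so either an additional sign normalization is needed in the definition of $D'_z$, or the reconciliation with the stated formula requires a different mechanism than the one you propose. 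This sign/coefficient bookkeeping is precisely the part you defer, and it is where the argument breaks down.
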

 For the $\Hecke$-module $M(\textbf{E}_J, L)$, two pairs of dual bases $\textbf{C}, \textbf{C}'$ and $D, D'$ give the structures of the "full $W$-graphs".

\subsection*{The module $\hat{M}(D_J, L)$}
Set $\textbf{E}_J:=D_J$. If $D_J$ is regarded as a $W$-graph ideal with respect to $\emptyset$ (see Deodhar's construction in Section 6), we have
\begin{lemma}
The modules $\hat{M}(D_J, L)$ and $M(D_J, L)$ are identical.
\end{lemma}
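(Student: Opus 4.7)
The plan is to establish a $\Hecke$-module isomorphism $\phi \colon M(D_J, L) \to \hat{M}(D_J, L)$ via the natural identification $\Gamma_w \leftrightarrow \hat{\Gamma}_w$ for $w \in D_J$. The crucial preparatory step is to invoke Deodhar's realisation of $M(D_J, L)$ (as developed in Section~6) when $D_J$ is regarded as a $W$-graph ideal with respect to $\emptyset$: in that setup the module is induced from the trivial character of $\Hecke_J$, and in particular every polynomial $r^s_{z, w}$ occurring in the $w \in \textbf{E}_{\emptyset, s}^{0,+}$ clause of Definition~\ref{def3} vanishes. Equivalently, $T_s \Gamma_w = q^{L(s)} \Gamma_w$ whenever $sw \notin D_J$, because then $sw = wt$ for some $t \in J$ conjugate to $s$, and $T_s \Gamma_w = T_w T_t \otimes \mathbf{1} = q^{L(t)} \Gamma_w = q^{L(s)} \Gamma_w$.

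With this reduction in hand I would compute $T_s \hat{\Gamma}_w$ directly from the defining relation $(T_s \hat{\Gamma}_w)(\Gamma_z) = \hat{\Gamma}_w(T_s \Gamma_z)$, so that the coefficient of $\hat{\Gamma}_z$ in $T_s \hat{\Gamma}_w$ is precisely the coefficient of $\Gamma_w$ in $T_s \Gamma_z$. Partitioning $z \in D_J$ into the three surviving types $\textbf{E}_{\emptyset, s}^{-}$, $\textbf{E}_{\emptyset, s}^{+}$, $\textbf{E}_{\emptyset, s}^{0,+}$ (the set $\textbf{E}_{\emptyset, s}^{0,-}$ is empty since $D_\emptyset = W$), I would read off the desired coefficient using the defining formulas. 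Because the $r^s$-polynomials all vanish, the only nonzero contributions come from $z = w$ (yielding diagonal coefficient $q^{L(s)} - q^{-L(s)}$ when $s$ is a strong descent of $w$, and $q^{L(s)}$ when $s$ is a weak ascent) and from $z = sw$ when $sw \in D_J$ (yielding coefficient $1$). A direct case-by-case comparison then confirms that $T_s \hat{\Gamma}_w$ and $T_s \Gamma_w$ have identical expansions for every $s \in S$ and every $w \in D_J$, so $\phi$ intertwines each $T_s$ and hence all of $\Hecke$. Being an $A$-linear bijection on bases, $\phi$ is the desired isomorphism.

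The main obstacle is precisely this vanishing of the polynomials $r^s_{z, w}$: without it, the expansion of $T_s \hat{\Gamma}_w$ picks up extra contributions from $z \in \textbf{E}_{\emptyset, s}^{0,+}$ with $w < z$, whereas the correction term in $T_s \Gamma_w$ ranges only over $z < w$, so the naive identification $\Gamma_w \leftrightarrow \hat{\Gamma}_w$ would fail in one direction or the other. The vanishing is a special feature of Deodhar's trivial-induced realisation of $D_J$ viewed with respect to $\emptyset$; for a general $W$-graph ideal $\textbf{E}_J$ the dual module $\hat{M}$ is genuinely distinct from $M$, and the correct relationship is the one captured by the dual pairs of bases $\{\textbf{C}, \textbf{C}'\}$ on $M$ and $\{D, D'\}$ on $\hat{M}$ constructed earlier in this section.
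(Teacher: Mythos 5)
Your proposal is correct and follows essentially the same route as the paper: compute $T_s\hat{\Gamma}_w$ directly from $(T_s\hat{\Gamma}_w)(\Gamma_z)=\hat{\Gamma}_w(T_s\Gamma_z)$ by case analysis on $z$, observe that only the diagonal and $z=sw$ terms survive, and conclude the formulas match those for $T_s\Gamma_w$. Your preliminary step — explaining explicitly that the correction polynomials $r^s_{z,w}$ vanish in the $\textbf{E}_{\emptyset,s}^{0,+}$ clause because $D_J$ viewed with respect to $\emptyset$ gives the induced module $\Hecke\otimes_{\Hecke_J}A_\psi$, where $sw=wt$ with $t\in J$ conjugate to $s$ forces $T_s\Gamma_w=q^{L(t)}\Gamma_w=q^{L(s)}\Gamma_w$ — is left implicit in the paper's proof, and is a worthwhile addition for readability.
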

\begin{proof}
For any basis element $\hat{\Gamma}_w$ of $\hat{M}(D_J, L)$ and element $\Gamma_y$ of $M(D_J, L)$,  we have
\begin{align*}
T_s\hat{\Gamma}_w(\Gamma_y)&=\hat{\Gamma}_w(T_s\Gamma_y)\\
&=\delta_{y\in D_{J,s}^-}\delta_{w, sy}+(q^{L(s)}-q^{-L(s)})\delta_{y\in D_{J,s}^-}\delta_{w,y}+\delta_{y\in D_{J,s}^+}\delta_{w,sy}\\
&\hspace{.7 cm}
+q^{L(s)}\delta_{y\in D_{J,s}^0}\delta_{w, y}\\
&=\delta_{w\in D_{J,s}^+}\delta_{sw, y}+(q^{L(s)}-q^{-L(s)})\delta_{w\in D_{J,s}^-}\delta_{w, y}+\delta_{w\in D_{J,s}^-}\delta_{sw, y}\\
&\hspace{.7 cm}
+q^{L(s)}\delta_{w\in D_{J,s}^0}\delta_{w, y}\\
&=(\delta_{w\in D_{J,s}^+}\hat{\Gamma}_{sw}+(q^{L(s)}-q^{-L(s)})\delta_{w\in D_{J,s}^-} \hat{\Gamma}_{w}+\delta_{w\in D_{J,s}^-}\hat{\Gamma}_{sw}\\
&\hspace{.7 cm}
+q^{L(s)}\delta_{w\in D_{J,s}^0}\hat{\Gamma}_{w})(\Gamma_y)
\end{align*}
hence we have
\[
T_s\hat{\Gamma}_w=
\begin{cases}
\hat{\Gamma}_{sw}&\text{if $w\in D_{J,s}^+$,}\\[3 pt]
\hat{\Gamma}_{sw}+(q^{L(s)}-q^{-L(s)})\hat{\Gamma}_{w}&\text{if $w\in D_{J,s}^-$,}\\[3 pt]
q^{L(s)}\hat{\Gamma}_{sw}&\text{if $w\in D_{J,s}^0$,}
\end{cases}
\]
The result follows.
\end{proof}

\begin{corollary}
 The $\Hecke$-module $M(D_J, L)$ has basis $\{D_z\mid z\in D_J \}$, where $D_z=\sum_{y\in D_J, z<y}Q_{z, y}\Gamma_y$.
 This basis gives the structure of $W$-graph module such that
\begin{equation*}
T_s D_z=
\begin{cases}
-q^{-L(s)}D_z+ D_{sz}+\!\!\sum\limits_{z<u,u\in {D_{J,s}^+}\cup{D_{J,s}^{0}}}\!\! m_{z,u}^s D_u&
\text{if $z\in D_{J. s}^-$,}\\[3 pt]
q^{L(s)}D_z&
\text{if $z\in D_{J, s}^+\cup D_{J, s}^0$,}
\end{cases}
\advance\belowdisplayskip5 pt
\end{equation*}
\end{corollary}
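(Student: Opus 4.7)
The plan is to deduce this corollary as a direct transport of the $W$-graph theorem for $\hat{M}(\textbf{E}_J, L)$ through the identification established in the preceding lemma. I would regard $D_J$ as a $W$-graph ideal with reference subset $\emptyset$ (so that in the notation of Section 2 one has $\textbf{E}_\emptyset := D_J$), apply the $W$-graph theorem for $\{D_z\}$ to the module $\hat{M}(D_J, L)$, and then push the conclusion across the identification $\hat{M}(D_J, L) \cong M(D_J, L)$ supplied by the preceding lemma.

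First I would translate the four classes $\textbf{E}_{\emptyset,s}^{\pm}$ and $\textbf{E}_{\emptyset,s}^{0,\pm}$ into the Deodhar sets $D_{J,s}^-$, $D_{J,s}^+$, $D_{J,s}^0$. Since $D_\emptyset = W$, the clause ``$sw \notin D_\emptyset$'' in the definition of $\textbf{E}_{\emptyset,s}^{0,-}$ is never satisfied, forcing $\textbf{E}_{\emptyset,s}^{0,-} = \emptyset$. The remaining three classes match up as $\textbf{E}_{\emptyset,s}^- = D_{J,s}^-$, $\textbf{E}_{\emptyset,s}^+ = D_{J,s}^+$, and $\textbf{E}_{\emptyset,s}^{0,+} = D_{J,s}^0$, using Lemma \ref{lemma:1} together with the classical fact that for $w \in D_J$ with $sw < w$ one automatically has $sw \in D_J$. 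Consequently the weak descent set of Section 2 satisfies $WD(z) = \emptyset$ for every $z$, while $SD(z) = \{s \mid z \in D_{J,s}^-\}$ and $A(z) = \{s \mid z \in D_{J,s}^+ \cup D_{J,s}^0\}$.

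Next I would invoke the $W$-graph theorem for $\{D_z\}$ applied to $\hat{M}(D_J, L)$. Because the case $s \in WD(z)$ is vacuous, the three-case formula of that theorem collapses exactly to the two cases stated in the corollary, with the membership condition ``$s \in A(u)$'' in the structure sum rewriting as ``$u \in D_{J,s}^+ \cup D_{J,s}^0$''. Finally I would apply the preceding lemma to identify $\hat{M}(D_J, L)$ with $M(D_J, L)$ via $\hat{\Gamma}_y \leftrightarrow \Gamma_y$; under this identification the element $D_z = \sum_{z < y} Q_{z,y}\hat{\Gamma}_y$ of $\hat{M}(D_J, L)$ transports to the asserted $D_z = \sum_{z < y} Q_{z,y}\Gamma_y$ in $M(D_J, L)$, and the $W$-graph action transfers verbatim.

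The argument is essentially bookkeeping, and I do not anticipate a substantive obstacle. The one step that merits care is the verification that $D_\emptyset = W$ forces $\textbf{E}_{\emptyset,s}^{0,-}$ to be empty, since it is precisely this fact that collapses the three cases of the general theorem into the two cases of the corollary and explains the absence of any weak-descent contribution to $T_s D_z$.
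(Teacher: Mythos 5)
Your proposal is correct and matches exactly the route the paper leaves implicit (the corollary is stated without a written proof): identify $\hat{M}(D_J,L)$ with $M(D_J,L)$ via the preceding lemma, apply the $W$-graph theorem for the $D$-basis of $\hat{M}$, and note that $\textbf{E}_{\emptyset,s}^{0,-}=\emptyset$ (hence $WD(z)=\emptyset$) collapses the three-case formula to the two stated cases. Your identification of $\textbf{E}_{\emptyset,s}^- = D_{J,s}^-$, $\textbf{E}_{\emptyset,s}^+ = D_{J,s}^+$, $\textbf{E}_{\emptyset,s}^{0,+}=D_{J,s}^0$ and the resulting dictionary $A(z)=\{s\mid z\in D_{J,s}^+\cup D_{J,s}^0\}$, $SD(z)=\{s\mid z\in D_{J,s}^-\}$ is exactly the needed bookkeeping.
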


\section{In the case $W$ is finite}
Let $(W,S)$ be a finite Coxeter system and $w_0$ be the longest element in $W$.
Define the function $\pi\colon W\to W$ by $\pi(w)=w_0ww_0$, it satisfies
$\pi(S) = S$ and it extends to a $C$-algebra isomorphism $\pi \colon C[W] \longmapsto C[W]$.
 We denote by $s_0=\pi(s)$.
 For $s\in S$
we have $\ell(w_0) = \ell(w_0s)+\ell(s) = \ell(\pi(s))+\ell(\pi(s)w_0)$, hence
\[
L(w_0) = L(w_0s)+L(s) =
L(\pi(s)) + L(\pi(s)w_0) = L(\pi(s)) + L(w_0s)
\]
 so that $L(\pi(s)) = L(s)$. It follows that
$L(\pi(w)) = L(w)$ for all $w \in W$ and that we have an $A$-algebra automorphism
$\pi: \Hecke\longmapsto \Hecke$ where $\pi(T_w) = T_{\pi(w)}$ for any $w \in W$.

\begin{lemma}
The $\Hecke$-modules $M$ and $\widetilde{M}$ have basis $\Gamma^\pi=\{T_{w_0}\overline{\Gamma_w} \mid w\in \textbf{E}_J \}$
and $\widetilde{\Gamma}^{\pi}=\{T_{w_0}\overline{\widetilde{\Gamma}_w} \mid w\in \textbf{E}_J \}$ respectively. Moreover we have
$\eta(T_{w_0}\overline{\Gamma_w})=\epsilon_{w_0w}\overline{T_{w_0}\overline{\widetilde{\Gamma}_w}}$.
\end{lemma}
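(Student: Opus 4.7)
My plan is to treat the two assertions of the lemma separately, both being largely formal consequences of facts already developed.

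For the basis assertion, the key observation is that $T_{w_0}$ is invertible in $\Hecke$ (since every $T_s$ is, via $T_s^{-1}=T_s+(q^{-L(s)}-q^{L(s)})$), so left multiplication by $T_{w_0}$ is an $A$-module automorphism of $M$. It therefore suffices to show that $\{\overline{\Gamma_w}\mid w\in \textbf{E}_J\}$ is itself an $A$-basis of $M$. This follows at once from the expansion $\overline{\Gamma_w}=\sum_{x\leqslant w}R_{x,w}\Gamma_x$ with $R_{w,w}=1$: the change-of-basis matrix is unitriangular with respect to (any linear extension of) the Bruhat order. The same reasoning applies verbatim to $\widetilde{M}$.

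For the identity, I would chain the defining properties of $\eta$ recorded in the Theorem-definition. Using $\Phi$-linearity and the commutation of $\eta$ with the bar involution (part II(a)),
\begin{align*}
\eta(T_{w_0}\overline{\Gamma_w})
&=\Phi(T_{w_0})\,\eta(\overline{\Gamma_w})
=\epsilon_{w_0}\overline{T_{w_0}}\,\overline{\eta(\Gamma_w)}\\
&=\epsilon_{w_0}\overline{T_{w_0}}\,\overline{\epsilon_w\overline{\widetilde{\Gamma}_w}}
=\epsilon_{w_0}\epsilon_w\,\overline{T_{w_0}}\,\widetilde{\Gamma}_w.
\end{align*}
On the $\widetilde{M}$-side, the bar involution satisfies $\overline{h\alpha}=\overline{h}\,\overline{\alpha}$ and squares to the identity, hence $\overline{T_{w_0}\overline{\widetilde{\Gamma}_w}}=\overline{T_{w_0}}\,\widetilde{\Gamma}_w$. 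It only remains to match signs: since $W$ is finite and $w\leqslant w_0$, $\ell(w_0w)=\ell(w_0)-\ell(w)$, so $\epsilon_{w_0w}=(-1)^{\ell(w_0)-\ell(w)}=\epsilon_{w_0}\epsilon_w$, and the stated equality follows.

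The argument is almost entirely formal bookkeeping, and I do not anticipate a genuine obstacle. The only point where finiteness of $W$ really enters (beyond guaranteeing the existence of $w_0$ itself) is the sign identity $\epsilon_{w_0w}=\epsilon_{w_0}\epsilon_w$, which is a harmless consequence of $\ell(w_0w)=\ell(w_0)-\ell(w)$ for $w\leqslant w_0$.
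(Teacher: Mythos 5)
Your argument is correct and follows essentially the same route as the paper's proof: $\Phi$-linearity of $\eta$, commutation with the bar involution, and the definition $\eta(\Gamma_w)=\epsilon_w\overline{\widetilde\Gamma_w}$, with the sign matching $\epsilon_{w_0}\epsilon_w=\epsilon_{w_0w}$ coming from $\ell(w_0w)=\ell(w_0)-\ell(w)$. The only (minor) divergence is in the basis assertion: the paper observes directly that the $A$-semilinear involution on $M$ squares to the identity, hence is a bijection sending bases to bases, whereas you appeal to the unitriangularity of the $R$-matrix; both are valid, the paper's being slightly more economical.
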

\begin{proof}
 Since the involution is square $1$ and $T_{w_0}$ is invertible in $\Hecke$, the statement follows.
 Furthermore
 \[
 \eta(T_{w_0}\overline{\Gamma_w})=\Phi(T_{w_0})\eta(\overline{\Gamma_w})=\epsilon_{w_0}\overline{T_{w_0}}\epsilon_w\widetilde{\Gamma}_w
 =\epsilon_{w_0w}\overline{T_{w_0}\overline{\widetilde{\Gamma}_w}}.
 \]
\end{proof}
In the following, for the sake of convenience we primarily focus on the module $M$ and omit the analogous details for $\widetilde{M}$,
unless it is needed.
For any $w\in \textbf{E}_J $ we denote by $w':=w_0w$ and $\Gamma^{\pi}_{w'}:=T_{w_0}\overline{\Gamma_w} (\in M(\textbf{E}_J, L))$.

 \emph{Remark}   Generally $w_0\textbf{E}_J\neq \textbf{E}_J$. We emphasize that, in the following contexts, the set $w_0\textbf{E}_J$ will be
 just used as the index set for the objects involved.

Direct computation gives the following multiplication rules for the basis ${\Gamma}^{\pi}$.
\[
T_{s_0}\Gamma^{\pi}_{w'}=
\begin{cases}
\Gamma^{\pi}_{s_0w'}+(q^{L(s)}-q^{-L(s)})\Gamma^\pi_{w'} &\text{if $w\in \textbf{E}_{J,s}^+$},\\
\Gamma^{\pi}_{s_0w'}
&\text{if $w\in \textbf{E}_{J,s}^-$},\\
-q^{-L(s)}\Gamma^{\pi}_{w'} & \text{if $w\in \textbf{E}_{J,s}^{0,-}$ },\\
q^{L(s)} \Gamma^\pi_{w'}-\sum_{\substack{z \in \textbf{E}_J\\z<w}}r^{s_0}_{w',z'}\Gamma^\pi_{z'}& \text{if $w\in \textbf{E}_{J,s}^{0,+}$},
\end{cases}
\]where $r^{s_0}_{w',z'}=\overline{r^{s}_{z,w}}\in q^{-L(s)}A_{<0}$.

\begin{lemma} For any $y'\in w_0\textbf{E}_J$ there exist coefficients $R^\pi_{x',y'}\in A$, defined for $x'\in w_0\textbf{E}_J$
and $x'<y'$, such that $\overline{\Gamma^{\pi}_{y'}}=\sum_{x'\in w_0\textbf{E}_J} R^\pi_{x',y'}\Gamma^{\pi}_{x'}$.
If $R^\pi_{x',y'}\neq 0$ then $x'\leqslant y'$; particularly $R^\pi_{y',y'}=1$.
\end{lemma}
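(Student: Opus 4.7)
The plan is to argue the triangularity and leading coefficient by induction on $\ell(y)$, after first establishing a useful reformulation of $\Gamma^\pi_{y'}$. Existence of the expansion is immediate from the fact, established in the previous lemma, that $\{\Gamma^\pi_{x'}: x'\in w_0\textbf{E}_J\}$ is an $A$-basis of $M$; only the Bruhat support of the nonzero coefficients and the leading coefficient $R^\pi_{y',y'}=1$ carry real content.

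The key reformulation is that $\Gamma^\pi_{y'} = T_{y'}\Gamma_1$ for every $y' \in w_0\textbf{E}_J$. This follows from the reduced factorization $w_0 = y'\cdot y^{-1}$ (reduced because $\ell(y') + \ell(y^{-1}) = \ell(w_0)$ in the finite Coxeter group $W$), which gives $T_{w_0} = T_{y'}T_{y^{-1}}$ in $\Hecke$ and consequently
\[
\Gamma^\pi_{y'} = T_{w_0}\overline{T_y}\,\Gamma_1 = T_{w_0}T_{y^{-1}}^{-1}\Gamma_1 = T_{y'}\Gamma_1.
\]

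For the inductive step with $\ell(y)>0$, I would pick $s\in S$ with $sy<y$. Since $\textbf{E}_J$ is an ideal in $(W,\leqslant_L)$ and $sy\leqslant_L y$, necessarily $sy\in\textbf{E}_J$, so $sy\in\textbf{E}_{J,s}^+$. The multiplication rule for $T_{s_0}$ on $\Gamma^\pi_{(sy)'}$ reads $T_{s_0}\Gamma^\pi_{(sy)'} = \Gamma^\pi_{y'} + (q^{L(s)}-q^{-L(s)})\Gamma^\pi_{(sy)'}$, equivalently $\Gamma^\pi_{y'} = T_{s_0}^{-1}\Gamma^\pi_{(sy)'}$. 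Applying bar and using $\overline{T_{s_0}^{-1}} = T_{s_0}$ yields
\[
\overline{\Gamma^\pi_{y'}} = T_{s_0}\,\overline{\Gamma^\pi_{(sy)'}}.
\]
Invoking the induction hypothesis on $sy$ (which has strictly smaller length) and expanding $T_{s_0}$ term by term via the four-case multiplication rule produces the desired expansion of $\overline{\Gamma^\pi_{y'}}$ in the $\Gamma^\pi$-basis.

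The main obstacle will be to verify that no term $\Gamma^\pi_{x'}$ with $x'\not\leqslant y'$ survives, and that the coefficient of $\Gamma^\pi_{y'}$ is exactly $1$. The conspicuous danger is the term $(q^{L(s)}-q^{-L(s)})\Gamma^\pi_{(sy)'}$ (with $(sy)'>y'$ in Bruhat order) contributed by $T_{s_0}\Gamma^\pi_{(sy)'}$; it must cancel against the induction-hypothesis contribution $R^\pi_{y',(sy)'}\,T_{s_0}\Gamma^\pi_{y'} = R^\pi_{y',(sy)'}\Gamma^\pi_{(sy)'}$, where one uses $T_{s_0}\Gamma^\pi_{y'}=\Gamma^\pi_{(sy)'}$ for $y\in\textbf{E}_{J,s}^-$. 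Confirming this cancellation, together with the base case $y=1$ (where triangularity is vacuous since $w_0$ is Bruhat-maximal in $w_0\textbf{E}_J$, but $R^\pi_{w_0,w_0}=1$ is itself nontrivial because the $\Gamma^\pi_{w_0}$-components of the various $T_u\Gamma_1$ for $u<w_0$ in the expansion $T_{w_0}^{-1} = T_{w_0} + \sum_{u<w_0}c_u T_u$ must also cancel), ultimately rests on the self-consistency of the multiplication rules with the bar involution on $M$, a property built into the $W$-graph ideal axioms.
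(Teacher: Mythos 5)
Your observation that $\Gamma^\pi_{y'} = T_{y'}\Gamma_1$ is correct and is a genuinely nice reformulation (it rests on the reduced factorization $w_0 = y' \cdot y^{-1}$ together with $\Gamma_y = T_y\Gamma_1$), and the recursion $\overline{\Gamma^\pi_{y'}} = T_{s_0}\overline{\Gamma^\pi_{(sy)'}}$ when $sy < y$ is also correct. However, what you have written is an outline, not a proof, and you say so yourself: the ``main obstacle'' (triangularity of the support and $R^\pi_{y',y'}=1$) is stated as needing verification and then deferred to ``the self-consistency of the multiplication rules with the bar involution\dots built into the $W$-graph ideal axioms.'' That last clause is not an argument; the triangularity of the $R^\pi$-expansion is exactly what has to be \emph{deduced} from the axioms, not cited as a built-in feature.

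More concretely, the inductive step as you set it up is not controlled. With increasing $\ell(y)$, the inductive hypothesis gives $\overline{\Gamma^\pi_{(sy)'}}$ support contained in $\{x' \leqslant (sy)'\}$, which is a \emph{strictly larger} set than the target $\{x' \leqslant y'\}$ since $(sy)' > y'$. So at every step one must show cancellation, and the single ``conspicuous danger'' you isolate is not the only one. When you multiply by $T_{s_0}$, the case $x \in \textbf{E}_{J,s}^{0,+}$ contributes $-\sum_{z<x} r^{s_0}_{x',z'}\Gamma^\pi_{z'}$, and here $z' > x'$; for $x$ with $sy < x$ this can introduce $\Gamma^\pi_{(sy)'}$ (and, via $z=y$, can even disturb the coefficient of $\Gamma^\pi_{y'}$). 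Your proposed two-term cancellation $(q^{L(s)}-q^{-L(s)}) + R^\pi_{y',(sy)'} \cdot 1$ ignores these extra contributions, and in any case you never verify that $R^\pi_{y',(sy)'} = -(q^{L(s)}-q^{-L(s)})$. The base case $y=1$ is similarly left open, and you acknowledge it is ``nontrivial.'' So there is a real gap: the approach may be salvageable, but the cancellation bookkeeping that constitutes the whole content of the lemma is missing.

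A further remark on strategy: because the $\textbf{E}_{J,s}^{0,+}$ rule for $T_{s_0}\Gamma^\pi_{w'}$ produces terms $\Gamma^\pi_{z'}$ with $z' > w'$, the $\Gamma^\pi$-basis is \emph{not} literally of the triangular form in Definition 2.4 with respect to the Bruhat order on $w_0\textbf{E}_J$, so the triangularity of $R^\pi$ genuinely requires an argument rather than a citation. Reversing the induction (decreasing $\ell(y)$, using $\Gamma^\pi_{y'} = T_{s_0}\Gamma^\pi_{(sy)'}$ for $sy>y$) removes the difficulty in the $\textbf{E}_{J,s}^{\pm}$ and $\textbf{E}_{J,s}^{0,-}$ cases, since then support only shrinks, but the same $\textbf{E}_{J,s}^{0,+}$ terms $\Gamma^\pi_{z'}$ with $z' > x'$ still need to be controlled; you would need to show $z' \leqslant y'$ there, which again is not automatic from $z<x$ and $sy\leqslant x$. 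Either way, the heart of the proof remains to be supplied.
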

The proof is trivial.

We have further properties of $R^\pi_{x',y'}$.
\begin{lemma}
If $y'\in w_0\textbf{E}_{J,s}^{0,-}$ then we have
\[
\postdisplaypenalty=10000 \advance\abovedisplayskip 0 pt minus 3
pt \advance\belowdisplayskip 0 pt minus 3 pt R^\pi_{x',y'}=
\begin{cases}
-q^{L(s_0)}R^\pi_{s_0x',y'}&\text{if $x'\in w_0\textbf{E}_{J,s}^-$},\\
-q^{-L(s_0)}R^\pi_{s_0x',y'}
&\text{if $x'\in w_0\textbf{E}_{J,s}^+$}.
\end{cases}
\]

If $y'\in w_0\textbf{E}_{J,s}^{0,+}$ then we have
\[
\postdisplaypenalty=10000 \advance\abovedisplayskip 0 pt minus 3
pt \advance\belowdisplayskip 0 pt minus 3 pt R^\pi_{x',y'}=
\begin{cases}
q^{-L(s_0)}R^\pi_{s_0x',y'}&\text{if $x'\in w_0\textbf{E}_{J,s}^-$},\\
q^{L(s_0)}R^\pi_{s_0x',y'}
&\text{if $x'\in w_0\textbf{E}_{J,s}^+$}.
\end{cases}
\]
\end{lemma}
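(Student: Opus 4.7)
The plan is to follow the strategy of Lemma 3.6 (its unprimed analogue), working throughout with the $\pi$-twisted basis $\{\Gamma^\pi_{w'}\}$, the $T_{s_0}$-action displayed just above the statement, the bar involution on $M(\textbf{E}_J,L)$, and the identity $L(s_0)=L(s)$.

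First I treat the case $y'\in w_0\textbf{E}_{J,s}^{0,-}$, where the multiplication table gives $T_{s_0}\Gamma^\pi_{y'}=-q^{-L(s)}\Gamma^\pi_{y'}$. Applying the bar involution to both sides, using $\overline{T_{s_0}}=T_{s_0}+(q^{-L(s)}-q^{L(s)})$ together with $\overline{\Gamma^\pi_{y'}}=\sum_{x'}R^\pi_{x',y'}\Gamma^\pi_{x'}$, and cancelling the scalar pieces, one obtains
\[
T_{s_0}\Bigl(\sum_{x'}R^\pi_{x',y'}\Gamma^\pi_{x'}\Bigr)=-q^{-L(s)}\sum_{x'}R^\pi_{x',y'}\Gamma^\pi_{x'}.
\]
I then expand the left-hand side using the four-case multiplication table for $T_{s_0}$ and compare coefficients of $\Gamma^\pi_{x'}$ on both sides. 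For $x'\in w_0\textbf{E}_{J,s}^-$ (so $sx\in \textbf{E}_{J,s}^+$, and hence $s_0x'\in w_0\textbf{E}_{J,s}^+$) the diagonal contribution $T_{s_0}\Gamma^\pi_{x'}=\Gamma^\pi_{s_0x'}$ has no $\Gamma^\pi_{x'}$-part, while $T_{s_0}\Gamma^\pi_{s_0x'}=\Gamma^\pi_{x'}+(q^{L(s)}-q^{-L(s)})\Gamma^\pi_{s_0x'}$ contributes $R^\pi_{s_0x',y'}$; equating with $-q^{-L(s)}R^\pi_{x',y'}$ yields $R^\pi_{x',y'}=-q^{L(s_0)}R^\pi_{s_0x',y'}$. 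The case $x'\in w_0\textbf{E}_{J,s}^+$ is entirely analogous (now the diagonal term contributes $(q^{L(s)}-q^{-L(s)})R^\pi_{x',y'}$ and $T_{s_0}\Gamma^\pi_{s_0x'}=\Gamma^\pi_{x'}$), producing $R^\pi_{x',y'}=-q^{-L(s_0)}R^\pi_{s_0x',y'}$.

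The case $y'\in w_0\textbf{E}_{J,s}^{0,+}$ is handled identically, starting from $T_{s_0}\Gamma^\pi_{y'}=q^{L(s)}\Gamma^\pi_{y'}-\sum_{z<y}r^{s_0}_{y',z'}\Gamma^\pi_{z'}$; after the same bar-and-rearrange step and coefficient comparison, the signs reverse relative to the first case, giving $R^\pi_{x',y'}=q^{-L(s_0)}R^\pi_{s_0x',y'}$ on $w_0\textbf{E}_{J,s}^-$ and $R^\pi_{x',y'}=q^{L(s_0)}R^\pi_{s_0x',y'}$ on $w_0\textbf{E}_{J,s}^+$. The main bookkeeping hazard is controlling the lower-order contributions produced by $T_{s_0}\Gamma^\pi_{x''}$ with $x''\in w_0\textbf{E}_{J,s}^{0,+}$ and $x'<x''$; these are handled exactly as in the proof of Lemma 3.6, so that after cancellation the two-term recursions for $R^\pi_{x',y'}$ emerge, with the powers of $q^{\pm L(s_0)}$ falling out by direct inspection from the $(+)/(-)$ interchange between the $\Gamma^\pi$ and $\Gamma$ multiplication rules.
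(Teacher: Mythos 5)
Your proof is correct and follows essentially the same route as the paper, which simply says the argument is the same as for the unprimed analogue (Lemma 3.5, not 3.6 as you cite): apply the bar involution to the relation $T_{s_0}\Gamma^\pi_{y'}=-q^{-L(s_0)}\Gamma^\pi_{y'}$ (resp.\ the $\textbf{E}_{J,s}^{0,+}$ relation), expand $\overline{\Gamma^\pi_{y'}}$ in the $R^\pi$-coefficients via the twisted multiplication table, and compare coefficients of $\Gamma^\pi_{x'}$. Your treatment of the lower-order $r^{s_0}$-contributions is no less explicit than the paper's own, so no further changes are needed beyond fixing the internal reference.
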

\begin{proof}
The proof is similar with that of Lemma 3.5.
\end{proof}


\subsection{The bases $\textbf{C}^\pi$ for $M$ }
The elements $R^\pi_{w', y'}$ , where $w', y'\in w_0\textbf{E}_J$, lead to the construction of another set of elements $P^\pi_{w',y'}$
and the following basis of $M(\textbf{E}_J, L)$ .

\begin{thm}\label{thm:2}
The $\Hecke$-module $M(\textbf{E}_J, L)$ has a unique basis
$\{\,\textbf{C}^\pi_{y'}\mid y'\in w_0\textbf{E}_J\,\}$ such that $\overline{\textbf{C}^\pi_{y'}}=\textbf{C}^\pi_{y'}$ for all
$y\in w_0\textbf{E}_J$, and $\textbf{C}^\pi_{y'}\,=\sum\limits_{w' \in w_0\textbf{E}_J}\!\!P^\pi_{w',y'}\Gamma^\pi_{w'}.$
for some elements $P^\pi_{w',y'}\in A_{\geqslant 0}$ with the following
properties\/\textup{:}
\begin{itemize}
\item[(a1)] $P^\pi_{w',y'}=0$ if $w'\nleqslant y'$\textup{;}
\item[(a2)]
$P^\pi_{y',y'}=1$;
\item[(a3)] $P^\pi_{w',y'}$ has zero constant term if
$y'\neq w'$ and
\[
\overline{P^\pi_{w', y'}}-P^\pi_{w',y'}=\!\!\! \sum_{\substack{w'<x'\leqslant y'\\ x'\in w_0\textbf{E}_J}}\overline{R^\pi_{w',x'}}P^\pi_{x', y'} \text{for any $w'<y'$}.
\]
\end{itemize}
\end{thm}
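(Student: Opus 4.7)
The plan is to repeat the classical Kazhdan--Lusztig construction (see~\cite[Sec.~2]{Lusztig1} or \cite[Th.~4.4]{Yin}), now using the coefficients $R^\pi_{x',y'}$ in place of $R_{x,y}$ and the basis $\Gamma^\pi$ in place of $\Gamma$. I will induct on the non-negative integer $\ell(y')-\ell(w')$, simultaneously constructing the elements $P^\pi_{w',y'}$ and verifying (a1)--(a3). The base case $w'=y'$ is forced by (a2): set $P^\pi_{y',y'}=1$, and property (a1) simply limits the support of the sum to $w'\leqslant y'$.

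For the inductive step, fix $w'<y'$ and suppose $P^\pi_{x',y'}$ has been constructed for all $x'$ with $w'<x'\leqslant y'$. Consider the known element
\[
\xi_{w',y'}\;:=\;\sum_{w'<x'\leqslant y',\ x'\in w_0\textbf{E}_J}\overline{R^\pi_{w',x'}}\,P^\pi_{x',y'}\;\in\;A.
\]
If one can show $\overline{\xi_{w',y'}}=-\xi_{w',y'}$, then decomposing $\xi_{w',y'}=\xi^{+}+\xi^{-}$ with $\xi^{+}\in A_{>0}$ and $\xi^{-}\in A_{<0}$ forces $\xi^{-}=-\overline{\xi^{+}}$; setting $P^\pi_{w',y'}:=-\xi^{+}$ then gives an element of $A_{>0}$ satisfying (a3), and is the unique choice in $A_{\geqslant 0}$ with zero constant term enjoying that identity.

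The main obstacle is therefore the anti-invariance $\overline{\xi_{w',y'}}=-\xi_{w',y'}$. I would derive this from the matrix identity $R^\pi\cdot\overline{R^\pi}=I$, equivalently
\[
\sum_{x'\in w_0\textbf{E}_J}R^\pi_{z',x'}\,\overline{R^\pi_{x',y'}}=\delta_{z',y'},
\]
which follows by applying the bar involution of $M$ twice to $\Gamma^\pi_{y'}$ and using that bar squares to the identity. Combining this identity with the inductive bar-invariance of the coefficients already produced (read as $\overline{P^\pi_{x',y'}}-P^\pi_{x',y'}=\xi_{x',y'}$ for $w'<x'<y'$) via the telescoping rearrangement appearing in Lusztig's original proof yields the required anti-invariance. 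Once this is secured, bar-invariance of $\textbf{C}^\pi_{y'}$, uniqueness of the whole system $\{\textbf{C}^\pi_{y'}\}$, and the remaining clauses of (a1) and (a2) all follow from a routine assembly of the componentwise identities; the ``zero constant term'' portion of (a3) is built into the construction $P^\pi_{w',y'}=-\xi^{+}\in A_{>0}$.
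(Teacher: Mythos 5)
Your proposal is correct and follows essentially the same route as the paper, which itself only sketches the argument by citing Lusztig's induction on the length difference together with the key fact that $f=\sum_{w'<x'\leqslant y'}\overline{R^\pi_{w',x'}}P^\pi_{x',y'}$ satisfies $\overline{f}=-f$. Your derivation of this anti-invariance from $R^\pi\cdot\overline{R^\pi}=I$ and the telescoping rearrangement, followed by extracting the positive-degree part to define $P^\pi_{w',y'}$, is exactly the standard Kazhdan--Lusztig construction the paper intends.
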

The proof is very similar to that of \cite[Th. 5.2]{Lusztig2} or \cite[Section 2]{Lusztig1}. It uses induction on $\ell(w')-\ell(y')$,
and the fact:
\[
\text{
If $f=\!\!\! \sum_{\substack{w'<x'\leqslant y'\\ x'\in w_0\textbf{E}_J}}\overline{R^\pi_{w',x'}}P^\pi_{x', y'}$ then
$\overline{f}=-f$.}
\]
We omit further details of the proof.

\begin{lemma} For $y, w\in \textbf{E}_J$. We have
(i) $y\leqslant_L w \Longleftrightarrow w'\leqslant_L y'$;

(ii) $R^\pi_{w', y'}=R_{y, w}$; $\widetilde{R}^\pi_{w', y'}=\widetilde{R}_{y, w}$;

(iii) for any $w', y'\in w_0\textbf{E}_J$ and $w'<y'$ we have
\begin{align*}
\overline{P^\pi_{w', y'}}=\!\!\! \sum_{\substack{w'\leqslant x'\leqslant y'\\ x'\in w_0\textbf{E}_J}}P^\pi_{x', y'}\overline{R^\pi_{x,w}} .\\
\overline{\widetilde{P}^\pi_{w', y'}}=\!\!\! \sum_{\substack{w'\leqslant x'\leqslant y'\\ x'\in w_0\textbf{E}_J}}\widetilde{P}^\pi_{x', y'}\overline{\widetilde{R}^\pi_{x,w}} 
\end{align*}
\end{lemma}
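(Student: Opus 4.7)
The plan handles (i) directly, (ii) by a bar-involution induction, and (iii) as a rearrangement of the preceding theorem.

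For part (i), given $y \leqslant_L w$, write $w = zy$ with $\ell(w) = \ell(z) + \ell(y)$. Then
\[
y' \;=\; w_0 y \;=\; (w_0 z^{-1} w_0^{-1})(w_0 w) \;=\; \pi(z^{-1})\, w',
\]
and since $\pi$ preserves length while $\ell(w_0 u) = \ell(w_0) - \ell(u)$, one checks $\ell(\pi(z^{-1})) + \ell(w') = \ell(z) + (\ell(w_0) - \ell(w)) = \ell(w_0) - \ell(y) = \ell(y')$, so $w' \leqslant_L y'$. The converse applies the same argument to $(w', y')$ in place of $(y, w)$, since $\pi^2 = \mathrm{id}$.

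For part (ii), I would induct on $\ell(w) - \ell(y) = \ell(y') - \ell(w')$. The base case $y = w$ gives $R_{y, y} = 1 = R^\pi_{y', y'}$ trivially. For $y < w$, a reduced expression $w = s_{i_1} \cdots s_{i_k} y$ of length $k + \ell(y)$ (with $k \geqslant 1$) produces $s := s_{i_k}$ such that $sy \in \textbf{E}_J$ and $sy > y$, i.e.\ $y \in \textbf{E}_{J, s}^+$. The $\Gamma^\pi$-multiplication rule then reads $\Gamma^\pi_{y'} = T_{s_0}\, \Gamma^\pi_{(sy)'}$, and applying the bar involution (using $\overline{T_{s_0}} = T_{s_0}^{-1}$) yields $\overline{\Gamma^\pi_{y'}} = T_{s_0}^{-1}\, \overline{\Gamma^\pi_{(sy)'}}$. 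Expanding both sides in the $\Gamma^\pi$-basis and extracting the coefficient of $\Gamma^\pi_{w'}$ produces a four-case recursion for $R^\pi_{w', y'}$ in terms of $R^\pi_{\ast, (sy)'}$-values, organised by the membership of $w$ in $\textbf{E}_{J, s}^{-, +, 0-, 0+}$. The induction hypothesis (applicable since $\ell(w) - \ell(sy) = \ell(w) - \ell(y) - 1$) replaces each $R^\pi_{\ast, (sy)'}$ by the corresponding $R_{sy, \ast}$, and comparison with Lemma 3.4 applied to $R_{y, w}$ (whose four cases match ours after the index swap $R^\pi_{w', y'} \leftrightarrow R_{y, w}$) closes the identification. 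The tilded identity $\widetilde{R}^\pi_{w', y'} = \widetilde{R}_{y, w}$ follows by combining Corollary 3.2 with its $\pi$-analogue $\overline{R^\pi_{x', y'}} = \epsilon_{x'}\epsilon_{y'}\widetilde{R}^\pi_{x', y'}$ (obtained by applying $\eta$ to the expansion of $\overline{\Gamma^\pi_{y'}}$ and using Lemma 5.1), since $\epsilon_{w'}\epsilon_{y'} = \epsilon_w \epsilon_y$ via $\epsilon_{w_0}^2 = 1$.

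For part (iii), the identity is an immediate rearrangement of the characterising bar-invariance formula in Theorem 5.4: moving $P^\pi_{w', y'}$ to the right side of
\[
\overline{P^\pi_{w', y'}} - P^\pi_{w', y'} \;=\; \sum_{w' < x' \leqslant y'} \overline{R^\pi_{w', x'}}\, P^\pi_{x', y'}
\]
and absorbing it as the $x' = w'$ summand via $R^\pi_{w', w'} = 1$ produces the claim; the $\widetilde{P}^\pi$-identity is verbatim.

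The principal obstacle is the case-by-case bookkeeping in step (ii): the inhomogeneous contributions from $u \in \textbf{E}_{J, s}^{0, +}$ in the $\Gamma^\pi$-recursion must be reconciled with the parallel terms on the $R$-side, using the identity $r^{s_0}_{u', w'} = \overline{r^s_{w, u}}$ recorded at the start of Section 5 together with the ideal property of $\textbf{E}_J$, in exact parallel with the implicit vanishing in the derivation of Lemma 3.4 itself.
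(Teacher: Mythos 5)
Your parts (i) and (iii) are fine and agree with the paper's route: (i) is the elementary computation the paper dismisses as obvious, and (iii) is exactly the rearrangement of the bar-invariance identity of Theorem 5.4 (absorbing $P^\pi_{w',y'}$ as the $x'=w'$ term via $R^\pi_{w',w'}=1$, then rewriting $R^\pi$ via (ii)). The genuine gap is in part (ii), in two places. First, your choice of $s$ comes from a reduced expression $w=s_{i_1}\cdots s_{i_k}y$, which presupposes $y\leqslant_L w$ (that $y$ is a suffix of $w$). But the identity $R^\pi_{w',y'}=R_{y,w}$ has to be established for all pairs $y,w\in \textbf{E}_J$: the coefficients $R_{y,w}$ live on Bruhat-comparable pairs, which need not be comparable in the left weak order, and for Bruhat-incomparable pairs one still has to see that both sides vanish; for all such pairs your $s$ simply does not exist. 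Second, even when such an $s$ exists, the induction on $\ell(w)-\ell(y)$ does not close. Your $s$ only guarantees $sy>y$, not $sw<w$ (take $y=1$, $w=s_1s_2$, $s=s_2$ in type $A_2$), so the recursion obtained by barring $\Gamma^\pi_{y'}=T_{s_0}\Gamma^\pi_{(sy)'}$ contains, in the case $w\in\textbf{E}_{J,s}^+$, the term $R^\pi_{(sw)',(sy)'}$, i.e.\ the pair $(sy,sw)$ whose length difference equals $\ell(w)-\ell(y)$, and the correction terms from the $\textbf{E}_{J,s}^{0,+}$ case involve pairs $(sy,x)$ with $x>w$ of possibly larger difference; the induction hypothesis covers neither. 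For the same reason your intended comparison with the $R$-recursion (Lemma 3.3 in the paper's numbering, not 3.4) is unavailable: that lemma requires the second index to satisfy $w\in\textbf{E}_{J,s}^-$, i.e.\ $sw<w$, which your $s$ need not satisfy.

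The paper's proof avoids both problems by inducting on $\ell(w)$ alone and choosing $s$ with $sw<w$: such an $s$ always exists for $\ell(w)\geqslant 1$, the ideal property gives $sw\in\textbf{E}_J$ (so $w\in\textbf{E}_{J,s}^-$), and the four cases are run over the membership of the first index $y$, using Lemma 3.3 and Lemma 3.5 on the untwisted side together with Lemma 5.3 and the $\Gamma^\pi$-multiplication rules on the twisted side. Every term produced has second index $sw$ of strictly smaller length, so the hypothesis applies to all first indices simultaneously and the induction closes. If you want to salvage your scheme, the minimal repair is to switch to this choice of $s$ (a descent of $w$ rather than an ascent of $y$) and induct on $\ell(w)$.
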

\begin{proof}(a) is obvious.
We prove (b) by induction on $\ell(w)$. If $\ell(w)=0$ then $w=1$. We have $R_{y, 1}=\delta_{y, 1}$. Now $R^\pi_{w_0, w_0y}=0$ unless $w_0\leqslant_L w_0y$. On the other hand we have $w_0y\leqslant_L w_0$. Hence $R^\pi_{w_0w, w_0y}=0$ unless $w_0y=w_0$, that is $y=1$
in which case it is $1$. The desired equality holds when $\ell(w)=0$. Assume that $\ell(w)\geqslant 1$. We can find
$s\in S$ such that $sw<w$. The proof of the following cases (a) and (b) is similar with Lusztig....

In the case (a) $y\in \textbf{E}_{J,s}^-$. By the induction hypothesis we have
\[
R_{y, w}=R_{sy, sw}=R^\pi_{w_0sw, w_0sy}=R^\pi_{s_0w_0w, s_0w_0y}=R^\pi_{w_0w, w_0y}
\]

In the case (b) $y\in \textbf{E}_{J,s}^+$. Using Lemma 3.3, by the induction hypothesis we have
\begin{align*}
R_{y, w}&=R_{sy, sw}+(q^{-L(s)}-q^{L(s)})R_{y, sw}\\
&=R^\pi_{w_0sw, w_0sy}+(q^{-L(s_0)}-q^{L(s_0)})R^\pi_{w_0sw, w_0y}\\
&=R^\pi_{s_0w', s_0y'}+(q^{-L(s_0)}-q^{L(s_0)})R^\pi_{s_0w', y'}\\
&=R^\pi_{s_0w', s_0y'}+(q^{-L(s_0)}-q^{L(s_0)})R^\pi_{w', s_0y'}\\
&=R^\pi_{w', y'}
\end{align*}
In the Case (c) $y\in \textbf{E}_{J,s}^{0,-}$. Using Lemma 3.5 and Lemma 5.3, by the induction hypothesis we have
\begin{align*}
R_{y, w}=-q^{L(s)}R_{y, sw}=-q^{L(s_0)}R^\pi_{w_0(sw), w_0y}=-q^{L(s_0)}R^\pi_{s_0w', y'}\\[2pt]
=-q^{L(s_0)}(-q^{-L(s_0)}R^\pi_{w', y'})=R^\pi_{w', y'}.
\end{align*}
Case (d) $y\in \textbf{E}_{J,s}^{0,+}$. Using Lemma 3.5 and 5.3, by the induction hypothesis we have
\[
R_{y, w}=q^{-L(s)}R_{y, sw}=q^{-L(s_0)}R^\pi_{s_0w', y'}=R^\pi_{w', y'}.
\]
(iii) follows (ii).
\end{proof}
\begin{proposition}
For any $y, w\in \textbf{E}_J$ we have $Q_{y, w}=\epsilon_y\epsilon_w \widetilde{P}^\pi_{w',y'}$.
(Analogously $\widetilde{Q}_{y, w}=\epsilon_y\epsilon_w P^\pi_{w',y'}$).
\end{proposition}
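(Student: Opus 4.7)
The strategy is to set $q_{y,w} := \epsilon_y\epsilon_w\widetilde{P}^\pi_{w',y'}$ and verify that $q_{y,w}$ satisfies the same characterizing properties as $Q_{y,w}$, so that $q_{y,w}=Q_{y,w}$ by the standard Kazhdan--Lusztig uniqueness argument. First I would match the triangular and degree data: Theorem 5.5 applied to $\widetilde{M}$ gives $\widetilde{P}^\pi_{y',y'}=1$, $\widetilde{P}^\pi_{w',y'}\in A_{\geqslant 0}$ with zero constant term whenever $w'\neq y'$, and $\widetilde{P}^\pi_{w',y'}=0$ unless $w'\leqslant y'$; by Lemma 5.4(i), $w'\leqslant y'\Leftrightarrow y\leqslant w$. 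Hence $q_{w,w}=1$, $q_{y,w}=0$ for $y\nleqslant w$, and $q_{y,w}\in A_{>0}$ for $y<w$, matching the known properties of $Q_{y,w}$ from Proposition 3.6.

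The main computation is matching the bar identity. Starting from Theorem 5.5(a3) applied to $\widetilde{M}$,
\[
\overline{\widetilde{P}^\pi_{w',y'}}-\widetilde{P}^\pi_{w',y'}=\sum_{w'<x'\leqslant y'}\overline{\widetilde{R}^\pi_{w',x'}}\,\widetilde{P}^\pi_{x',y'},
\]
I use Lemma 5.4(ii) to convert $\widetilde{R}^\pi_{w',x'}=\widetilde{R}_{x,w}$ and Lemma 5.4(i) to re-index the sum over $y\leqslant x<w$; including the $x=w$ term with $\widetilde{R}_{w,w}=1$ rewrites the identity as
\[
\overline{\widetilde{P}^\pi_{w',y'}}=\sum_{y\leqslant x\leqslant w}\overline{\widetilde{R}_{x,w}}\,\widetilde{P}^\pi_{x',y'}.
\]
Multiplying both sides by $\epsilon_y\epsilon_w$ and using Corollary 3.2 in the form $\overline{\widetilde{R}_{x,w}}=\epsilon_x\epsilon_w R_{x,w}$ absorbs all signs into $q_{y,x}=\epsilon_y\epsilon_x\widetilde{P}^\pi_{x',y'}$, yielding
\[
\overline{q_{y,w}}=\sum_{y\leqslant x\leqslant w}q_{y,x}\,R_{x,w},
\]
which is exactly the componentwise form of the matrix identity $\overline{Q}=QR$ established in the proof of Proposition 3.8.

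I would then close with the Kazhdan--Lusztig uniqueness argument: by induction on $\ell(w)-\ell(y)$, a family $\{A_{y,w}\}$ with $A_{w,w}=1$, $A_{y,w}\in A_{>0}$ for $y<w$, and $\overline{A_{y,w}}-A_{y,w}=\sum_{y\leqslant x<w}A_{y,x}R_{x,w}$ is uniquely determined, since the right-hand side is forced to be bar-antiinvariant (using $\overline{R}R=I$) and any such bar-antiinvariant element admits a unique expression of the form $\overline{b}-b$ with $b\in A_{>0}$. Both $Q_{y,w}$ and $q_{y,w}$ satisfy this system, and so they coincide. The parenthetical identity $\widetilde{Q}_{y,w}=\epsilon_y\epsilon_w P^\pi_{w',y'}$ follows by interchanging the roles of $M$ and $\widetilde{M}$ throughout.

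The principal obstacle is the sign bookkeeping in the bar-identity step---tracking how the factors $\epsilon_y\epsilon_w$ combine with Corollary 3.2 to convert the $\widetilde{R}^\pi$-recursion into an $R$-recursion with no residual signs---together with checking that the hypotheses needed for the uniqueness argument (membership in $A_{>0}$ and zero constant term for $y<w$) are genuinely available for $Q_{y,w}$ in the relative setting, which follows from Proposition 3.6 and the corresponding property of $P_{y,w}$ for $y<w$.
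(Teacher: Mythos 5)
Your proposal is correct and takes essentially the same approach as the paper's own proof: both compare the bar-recursion for $Q$ (from $\overline{Q}=QR$) with the one for $\widetilde{P}^\pi$ (Theorem~\ref{thm:2}(a3) for $\widetilde{M}$), translate the latter to $\textbf{E}_J$-indexing via Lemma~5.4 and Corollary~3.2, and close by induction on $\ell(w)-\ell(y)$ together with the observation that a bar-invariant element of $A_{>0}$ must vanish. The paper merely abbreviates the reindexing and sign bookkeeping that you spell out explicitly.
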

\begin{proof}
We argue by induction on $\ell(w)-\ell(y)\geqslant 0$. If $\ell(w)-\ell(y)= 0$ we have $y=w$ and both sides are $1$.
Assume that $\ell(w)-\ell(y)> 0$. Subtracting the identity in ...from that in ...and using induction hypothesis, we obtain
\[
\overline{\epsilon_y\epsilon_wQ_{y, w}}-\overline{\widetilde{P}^\pi_{w',y'}}=\epsilon_y\epsilon_wQ_{y, w}-\widetilde{P}^\pi_{w',y'}
\]
The right hand side is in $A_{>0}$; since it is fixed by the involution bar, it is $0$.
\end{proof}
More precisely, we have the following inversion formulas
\begin{corollary} In the above situation,
\begin{align*}
\sum\limits_{z\in \textbf{E}_J, x\leqslant z\leqslant w}\varepsilon_w\varepsilon_z P_{x, z}\widetilde{P}^\pi_{w',z'}=\delta_{x, w};\\
\sum\limits_{z\in \textbf{E}_J, x\leqslant z\leqslant w}\varepsilon_w\varepsilon_z \widetilde{P}_{x, z}P^\pi_{w',z'}=\delta_{x, w}
\end{align*}
for all $x, w\in \textbf{E}_J$.
\end{corollary}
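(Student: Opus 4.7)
The plan is to reduce the two claimed identities to the standard fact that the transition matrices $P=(P_{y,w})$ and $Q=(Q_{y,w})$ between the bases $\{\Gamma_w\}$ and $\{\textbf{C}_w\}$ of $M(\textbf{E}_J,L)$ are mutually inverse, combined with the identification of $Q$ with the twisted polynomial $\widetilde{P}^\pi$ furnished by the preceding proposition.

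First I would recall the two defining equations
\[
\textbf{C}_w=\sum_{y\in \textbf{E}_J} P_{y,w}\Gamma_y, \qquad \Gamma_w=\textbf{C}_w+\sum_{y\in \textbf{E}_J} Q_{y,w}\textbf{C}_y,
\]
in which $P$ and $Q$ are upper unitriangular (with respect to the Bruhat order) with $P_{w,w}=Q_{w,w}=1$. Substituting the second into the first and comparing coefficients of $\textbf{C}_w$, the triangularity confines the summation range to $x\leqslant z\leqslant w$, producing the matrix identity $PQ=I$:
\[
\sum_{z\in \textbf{E}_J,\,x\leqslant z\leqslant w} P_{x,z}\,Q_{z,w}=\delta_{x,w}.
\]

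Next I would insert the relation $Q_{z,w}=\varepsilon_z\varepsilon_w\,\widetilde{P}^\pi_{w',z'}$ from Proposition~5.5 into this formula. Since $\varepsilon_w$ does not depend on the summation index, it may be pulled inside the sum, which gives exactly the first claimed inversion formula. The second formula is obtained by the same argument carried out inside $\widetilde{M}(\textbf{E}_J,L)$: from the triangular change of basis between $\{\widetilde{\Gamma}_w\}$ and $\{\widetilde{\textbf{C}}_w\}$ one gets $\sum_z \widetilde{P}_{x,z}\,\widetilde{Q}_{z,w}=\delta_{x,w}$, and then the parenthetical companion identity $\widetilde{Q}_{z,w}=\varepsilon_z\varepsilon_w\,P^\pi_{w',z'}$ from Proposition~5.5 finishes it off.

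There is no real obstacle: the substantive content has already been absorbed into Proposition~5.5, and the corollary is a purely formal rewriting of the matrix relation $PQ=I$ (respectively $\widetilde{P}\widetilde{Q}=I$). The only point worth being a little careful about is verifying that the summation ranges genuinely coincide: the triangularity of $P$, $Q$ forces $x\leqslant z\leqslant w$ on the left, while the vanishing property $\widetilde{P}^\pi_{w',z'}=0$ unless $w'\leqslant z'$ (equivalently $z\leqslant w$, using Lemma~5.4(i)) is compatible with this range.
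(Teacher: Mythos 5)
Your proposal is correct and is essentially the argument the paper intends: the corollary is stated as an immediate consequence of Proposition~5.5 (the identifications $Q_{z,w}=\varepsilon_z\varepsilon_w\widetilde{P}^\pi_{w',z'}$ and $\widetilde{Q}_{z,w}=\varepsilon_z\varepsilon_w P^\pi_{w',z'}$) together with the unitriangular matrix inversions $PQ=QP=1$ and $\widetilde{P}\widetilde{Q}=1$ already recorded in the proof of Proposition~3.8. Your substitution-and-triangularity argument reproduces exactly this, so there is nothing to add.
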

\begin{corollary}
If $W$ is finite, for any $y, w\in \textbf{E}_J$ we have

\[
m^s_{y,w}=-\epsilon_{w_0y}\epsilon_{w_0w}m^{\pi, s_0}_{w_0w, w_0y},
\]
where $m^s_{y,w}$ are the elements involved in the multiplication formulas for $C$-basis,
$m^{\pi, s_0}_{w_0w, w_0y}$ are the analogous in the formulas for $C^\pi$-basis.
\end{corollary}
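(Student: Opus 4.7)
The plan is to derive the identity from the key matrix relation $\widetilde Q_{y,w}=\epsilon_y\epsilon_w P^\pi_{w',y'}$ of Proposition~5.6 (together with its $Q,\widetilde P^\pi$ counterpart), combined with the explicit $T_{s_0}$-action on the $\Gamma^\pi$-basis recorded at the start of Section~5.

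The first step is to establish the $T_{s_0}$-multiplication formula on the $\textbf{C}^\pi$-basis --- the analogue for the ``$\pi$-data'' of the $T_s\textbf{C}_v$-formula of \cite[Th.~4.7]{Yin}. This is precisely what \emph{defines} the coefficients $m^{\pi,s_0}_{w',y'}$. The proof essentially parallels that of \cite[Th.~4.7]{Yin}: one starts from the triangular expansion $\textbf{C}^\pi_{y'}=\sum_{w'}P^\pi_{w',y'}\Gamma^\pi_{w'}$, uses the bar-invariance $\overline{\textbf{C}^\pi_{y'}}=\textbf{C}^\pi_{y'}$, and applies the $T_{s_0}$-action on $\Gamma^\pi$ to extract the coefficients of each $\textbf{C}^\pi_{w'}$ via the uniqueness properties (a1)--(a3) of the $\textbf{C}^\pi$ characterization (Theorem~\ref{thm:2}). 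A direct comparison of the $T_{s_0}$-rule on $\Gamma^\pi$ with the original $T_s$-rule on $\Gamma$ shows that the strong ascent/descent types $\textbf{E}_{J,s}^{\pm}$ \emph{swap} under $w\mapsto w_0w$, while the weak types $\textbf{E}_{J,s}^{0,\pm}$ are \emph{preserved}; in particular, the index range where $m^{\pi,s_0}_{w_0w,w_0y}$ can be nonzero coincides with the range where $m^s_{y,w}$ can be nonzero.

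The second step is to match the two families of coefficients by transporting one defining recursion into the other. A convenient common setting is the dual-module picture: by the $T_sD_z$-formula of Section~4, $m^s_{y,w}$ is the coefficient of $D_w$ in $T_sD_y$, and $D_y=\sum_u Q_{y,u}\hat\Gamma_u$. Substituting $Q_{y,u}=\epsilon_y\epsilon_u\widetilde P^\pi_{u',y'}$ converts the defining recursion for $m^s_{y,w}$ into an equivalent recursion among the $\widetilde P^\pi$'s, which is precisely the recursion characterizing the $m^{\pi,s_0}$-coefficients arising from the $\widetilde{\textbf{C}}^\pi$-analogue of Step~1. Since --- by the same duality argument used in Section~3 to identify the $\textbf{C}$- and $\widetilde{\textbf{C}}$-coefficients --- the $\widetilde{\textbf{C}}^\pi$- and $\textbf{C}^\pi$-versions of $m^{\pi,s_0}$ coincide, matching coefficients yields the claimed identification of $m^s_{y,w}$ with $m^{\pi,s_0}_{w_0w,w_0y}$ up to a prefactor.

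The main obstacle is the sign bookkeeping. The $\epsilon$-factor $\epsilon_{w_0y}\epsilon_{w_0w}$ comes transparently from the $\epsilon_y\epsilon_u$ of Proposition~5.6 (using $\epsilon_{w_0}^2=1$), but the overall minus sign is subtler: it arises from the $\Phi$-linearity of the duality map $\eta$ --- under which $\Phi(T_s)=-\overline{T_s}$ --- combined with the strong-ascent/descent-type swap of Step~1. I would verify this case-by-case across the four ascent/descent types of $(y,w)$, confirming that the various structural sign contributions collapse uniformly to the single overall factor $-1$ claimed in the statement.
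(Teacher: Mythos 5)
The paper states this corollary without proof (implicitly as a consequence of Proposition 5.6 and Theorem 4.4), and your skeleton --- define $m^{\pi,s_0}$ by a Theorem 4.2-type formula for the $\textbf{C}^\pi$-basis, then transfer structure constants through the $D$-basis of $\hat{M}$ via $Q_{y,u}=\epsilon_y\epsilon_u\widetilde{P}^\pi_{u',y'}$ --- is surely the intended route. The problem is the step you lean on to finish.

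The genuine gap is the assertion that ``the $\widetilde{\textbf{C}}^\pi$- and $\textbf{C}^\pi$-versions of $m^{\pi,s_0}$ coincide, by the same duality argument used in Section 3 to identify the $\textbf{C}$- and $\widetilde{\textbf{C}}$-coefficients.'' Section 3 identifies no such thing: the maps $\eta,\theta$ only \emph{transport} one family into the other module with bars and signs ($\textbf{C}'_w=\sum_y\epsilon_y\overline{\widetilde{P}_{y,w}}\Gamma_y$, $\widetilde{\textbf{C}}'_w=\sum_y\epsilon_y\overline{P_{y,w}}\widetilde{\Gamma}_y$); the families $P$ and $\widetilde{P}$, and with them the structure constants of $\textbf{C}$ in $M$ and of $\widetilde{\textbf{C}}$ in $\widetilde{M}$, are genuinely different. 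Concretely, take $W$ of type $A_2$ with $S=\{s,t\}$, equal parameters, $J=\{t\}$, $\textbf{E}_J=D_J=\{1,s,ts\}$: in $M$ one gets $\textbf{C}_s=\Gamma_s-q\Gamma_1$, $\textbf{C}_{ts}=\Gamma_{ts}-q\Gamma_s+q^2\Gamma_1$ and $T_t\textbf{C}_s=q\textbf{C}_s+\textbf{C}_{ts}+\textbf{C}_1$, whereas in $\widetilde{M}$ one gets $\widetilde{\textbf{C}}_{ts}=\widetilde{\Gamma}_{ts}-q\widetilde{\Gamma}_s$ and $T_t\widetilde{\textbf{C}}_s=q\widetilde{\textbf{C}}_s+\widetilde{\textbf{C}}_{ts}$: the coefficient attached to the weak descent $t$ of $1$ is $1$ on one side and $0$ on the other. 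So a blanket identification of structure constants across the duality is false, and there is no more reason for the $\pi$-twisted pair $\textbf{C}^\pi,\widetilde{\textbf{C}}^\pi$ to be exempt. As written, substituting $Q_{y,u}=\epsilon_y\epsilon_u\widetilde{P}^\pi_{u',y'}$ into the $T_sD_y$-recursion can only relate $m^s$ to the $\widetilde{\textbf{C}}^\pi$-constants (equivalently, $\widetilde{Q}=\pm P^\pi$ relates $\widetilde{m}^s$ to the $\textbf{C}^\pi$-constants), not to the $m^{\pi,s_0}$ of the statement; bridging that difference is an inversion statement of the same depth as the corollary itself, so your argument is circular at its decisive point. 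A related symptom: your claim that the nonvanishing ranges coincide is wrong in the weak positions --- under $w\mapsto w_0w$ the strong types swap but the weak types do not, so $m^s_{y,w}$ is attached to $s\in SD(y)\cup WD(y)$ and $s\in SA(w)\cup WA(w)$, while $m^{\pi,s_0}_{w_0w,w_0y}$ requires $s\in SD(y)\cup WA(y)$ and $s\in SA(w)\cup WD(w)$. The example above sits exactly in this mismatch, which is where the real work (and possibly a sharpening of the statement, e.g. reading $m^{\pi,s_0}$ as the $\widetilde{\textbf{C}}^\pi$-constants or adding vanishing claims at weak positions) has to happen; the case-by-case sign check you postpone to the end does not address it.
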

\begin{corollary}
If $W$ is finite, for the bases $D$ and $C^\pi$ in $M(D_J, L)$, and the $\widetilde{D}$-basis and $\widetilde{C}^\pi$-basis for $\widetilde{M}(D_J, L)$
 we have
\[
T_{w_0}D_z=\epsilon_{w_0z} \theta(\widetilde{C}^\pi_{w_0z})\text{ and $T_{w_0}\widetilde{D}_z=\epsilon_{w_0z} \eta(C^\pi_{w_0z})$}.
\]
\end{corollary}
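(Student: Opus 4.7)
The plan is to evaluate the right-hand side of each identity by unfolding $\widetilde{\textbf{C}}^\pi_{w_0z}$ (resp.\ $\textbf{C}^\pi_{w_0z}$) in the $\widetilde{\Gamma}^\pi$-basis (resp.\ $\Gamma^\pi$-basis), pushing $\theta$ (resp.\ $\eta$) through using $\Phi$-linearity together with its compatibility with the bar involution, and then converting the resulting $\widetilde{P}^\pi$- and $P^\pi$-coefficients into $Q$- and $\widetilde{Q}$-coefficients via Proposition~5.6.

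For the first identity, start from
\[
\widetilde{\textbf{C}}^\pi_{w_0z}=\sum_{w\in \textbf{E}_J}\widetilde{P}^\pi_{w_0w,w_0z}\,T_{w_0}\overline{\widetilde{\Gamma}_w}.
\]
Because $\Phi$ fixes $A$, the map $\theta$ is $A$-linear and the coefficients pass through. Each basis term is handled by
\[
\theta\bigl(T_{w_0}\overline{\widetilde{\Gamma}_w}\bigr)=\Phi(T_{w_0})\,\overline{\theta(\widetilde{\Gamma}_w)}=\epsilon_{w_0}\overline{T_{w_0}}\cdot\epsilon_w\Gamma_w=\epsilon_{w_0w}\,\overline{T_{w_0}}\,\Gamma_w,
\]
invoking $\Phi(T_{w_0})=\epsilon_{w_0}\overline{T_{w_0}}$, the commutation of $\theta$ with bar (Theorem-Definition~3.1(a)), and $\theta(\widetilde{\Gamma}_w)=\epsilon_w\overline{\Gamma_w}$ (the companion of $\eta(\Gamma_w)=\epsilon_w\overline{\widetilde{\Gamma}_w}$).

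Next, I would substitute $\widetilde{P}^\pi_{w_0w,w_0z}=\epsilon_z\epsilon_w Q_{z,w}$ from Proposition~5.6 and collect the three sign factors using the elementary identities $\epsilon_w\epsilon_{w_0w}=\epsilon_{w_0}$ and $\epsilon_z\epsilon_{w_0}=\epsilon_{w_0z}$. The signs telescope to $\epsilon_{w_0z}$, the operator $\overline{T_{w_0}}$ comes outside the sum, and the identification $\hat{M}(D_J,L)=M(D_J,L)$ of Lemma~4.7 lets one recognize the remainder as $D_z=\sum_w Q_{z,w}\Gamma_w$, yielding
\[
\epsilon_{w_0z}\,\theta(\widetilde{\textbf{C}}^\pi_{w_0z})=\overline{T_{w_0}}\,D_z,
\]
which is the asserted identity since $\overline{T_{w_0}}=T_{w_0}^{-1}$.

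The second identity follows by the exact dual calculation --- interchanging $M\leftrightarrow\widetilde{M}$, $\theta\leftrightarrow\eta$, and $\Gamma\leftrightarrow\widetilde{\Gamma}$ throughout, and applying the companion formula $\widetilde{Q}_{z,w}=\epsilon_z\epsilon_w P^\pi_{w_0w,w_0z}$ from the second assertion of Proposition~5.6. The one delicate step in both cases is the sign bookkeeping; combining $\epsilon_z$ and $\epsilon_w$ from Proposition~5.6 with the $\epsilon_{w_0w}$ arising in $\theta(\widetilde{\Gamma}^\pi_{w_0w})$ into the single prescribed $\epsilon_{w_0z}$ is the essential simplification and really the only algebraic content of the argument.
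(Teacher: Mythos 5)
Your overall route is the natural one --- and since the paper states this corollary with no proof at all, there is nothing to compare against except the surrounding machinery, which you use exactly as intended: unfold $\widetilde{\textbf{C}}^\pi_{w_0z}$ in the $\widetilde{\Gamma}^\pi$-basis, push $\theta$ through by $\Phi$-linearity and its compatibility with the bar involution (so $\theta(T_{w_0}\overline{\widetilde{\Gamma}_w})=\epsilon_{w_0w}\overline{T_{w_0}}\,\Gamma_w$), substitute $\widetilde{P}^\pi_{w_0w,w_0z}=\epsilon_z\epsilon_w Q_{z,w}$ from Proposition~5.6, and recognize $D_z=\sum_w Q_{z,w}\Gamma_w$ via the identification of Lemma~4.7/Corollary~4.8; your sign telescoping $\epsilon_z\epsilon_w\epsilon_{w_0w}=\epsilon_{w_0z}$ is correct. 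The genuine problem is the last line. What your computation establishes is $\epsilon_{w_0z}\,\theta(\widetilde{\textbf{C}}^\pi_{w_0z})=\overline{T_{w_0}}\,D_z=T_{w_0}^{-1}D_z$, and this is not the asserted identity $T_{w_0}D_z=\epsilon_{w_0z}\,\theta(\widetilde{\textbf{C}}^\pi_{w_0z})$: the remark ``since $\overline{T_{w_0}}=T_{w_0}^{-1}$'' correctly identifies $\overline{T_{w_0}}$ with $T_{w_0}^{-1}$, but neither of these equals $T_{w_0}$, and $T_{w_0}D_z\neq T_{w_0}^{-1}D_z$ in general.

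To see that the discrepancy is real and not absorbable by more sign juggling, take $W=\{1,s\}$, $J=\emptyset$, $L(s)=1$, so $M=\widetilde{M}=\Hecke$, $\Gamma_w=\widetilde{\Gamma}_w=T_w$, $\Gamma^\pi_{w'}=T_{w_0}\overline{T_w}=T_{w'}$, hence $\widetilde{\textbf{C}}^\pi_1=T_1$ and $D_s=T_s$. Then $\epsilon_{w_0s}\,\theta(\widetilde{\textbf{C}}^\pi_{w_0s})=\theta(T_1)=T_1$, whereas $T_{w_0}D_s=T_s^2=T_1+(q-q^{-1})T_s$; only $T_{w_0}^{-1}D_s=T_1$ matches. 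So your argument, carried out with the paper's printed definitions ($\Gamma^\pi_{w'}=T_{w_0}\overline{\Gamma_w}$, $\theta(\widetilde{\Gamma}_w)=\epsilon_w\overline{\Gamma_w}$, Proposition~5.6), proves the bar-twisted version $\overline{T_{w_0}}\,D_z=\epsilon_{w_0z}\,\theta(\widetilde{\textbf{C}}^\pi_{w_0z})$ (equivalently $D_z=\epsilon_{w_0z}\,T_{w_0}\,\theta(\widetilde{\textbf{C}}^\pi_{w_0z})$), and the analogous statement for $\widetilde{D}_z$, but not the corollary as literally stated. You need either to justify replacing $\overline{T_{w_0}}$ by $T_{w_0}$ (which the rank-one example shows is impossible in general) or to flag that the statement should read $\overline{T_{w_0}}$ on the left in both formulas; as written, the closing sentence is a non sequitur and is the one genuine gap in an otherwise correct calculation.
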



\section{Some remarks}

\subsection*{An example: the dual Solomon modules}
In this subsection, let $(W, S)$ be a finite Coxeter group system. Assume that $L(s)>0$ for all $s\in S$. In \cite{Yin} we introduced the $A$-free $\Hecke$-module
$\Hecke C_{w_J}C'_{w_{\hat{J}}}$, which is called the \textbf{Solomon module} with respect to $J$ and $L$, and where
\begin{align*}
C_{w_J}=\epsilon_{w_J}\sum_{w \in W_J}\epsilon_w q^{L(ww_J)}T_w=\epsilon_{w_J}q^{L(w_J)}\sum_{w \in W_J}\epsilon_w q^{-L(w)}T_w;\\
\,\,C'_{w_{\hat{J}}}
=\sum_{w \in W_{\hat{J}}}q^{-L(ww_{\hat{J}})}T_w=q^{-L(w_{\hat{J}})}\sum_{w \in W_{\hat{J}}}q^{L(w)}T_w.
\end{align*}
that is, $C'_{w_{\hat{J}}}$ is the $C'$-basis element corresponding to $w_{\hat{J}}$, the maximal length element of $W_{\hat{J}}$,
or $c$-basis element corresponding to $w_{\hat{J}}$ (see \cite[Corollary 12.2]{Lusztig2}).
$C_{w_J}$ is the $C$-basis element corresponding to $w_J$.

In \cite{Yin} we showed that $\Hecke C_{w_J}C'_{w_{\hat{J}}}$ has basis $\{\,
T_xC_{w_J}C'_{\hat{J}} \mid x \in F_J\,\}$. 
This basis admits the multiplication rules listed in the Definition 2.4, and $F_J$ is a $W$-graph ideal with respect to $J$ and weight function $L$.

Similarly, the $\Hecke$-module $\Hecke C'_{w_J}C_{w_{\hat{J}}}$ has basis $\{\,
T_xC'_{w_J}C_{\hat{J}} \mid x \in F_J\,\}$. We can easily prove that this basis admits the multiplication rules listed in the Definition 3.1.
We call this the \textbf{dual module} of $\Hecke C_{w_J}C'_{w_{\hat{J}}}$.

\subsection*{The Kazhdan-Lusztig construction} Assume that $J=\emptyset$. Then $D_J=W$ and the sets $WD_J(w)$ and $WA_J(w)$ are empty for all $w\in W$.

(1). If $L(s)>0$ (for all $s\in S$), both modules
$M(\textbf{E}_J, L)$ and $\widetilde{M}(\textbf{E}_J, L)$ are with $A$-basis $(X_w \mid w\in \textbf{E}_J )$ such that,
\begin{equation*}
T_s X_w=\begin{cases}
X_{sw}&\text{if $\ell(sw)>\ell(w)$}\\[3 pt]
X_{sw}+(q^{L(s)}-q^{-L(s)} )X_w&\text{if $\ell(sw)<\ell(w)$,}
\end{cases}
\end{equation*}
where the elements $X_w$ stand for $\Gamma_w$ or $\widetilde{\Gamma}_w$.
If we let $X_w=T_w$ for all $w\in W$, then both modules are the regular module $\Hecke$ with weight function $L$.
Thus we can recover some of Lusztig's results ( for example, see~\cite[Ch.5, 6, 10, 11]{Lusztig2})for the regular case.

\subsection*{Deodhar's construction: the parabolic cases}
Let $J$ be an arbitrary subset of $S$ and $L(s)=1$ for all $s\in S$, we can now turning to Deodhar's construction.

Set $\textbf{E}_J:=D_J$, then $D_J$ is a $W$-graph ideal with respect to $J$, and also it is a $W$-graph ideal with respect
with $\emptyset$.

In the latter case we have $D_{\emptyset}=W$, if $w \in \textbf{E}_J$ then
\begin{align*}
SA(w) &= \{ s \in S | \text{$sw > w$ and $sw \in D_J$} \}, \\
SD(w) &= \{ s \in S | sw < w \},\\
WD_{\emptyset}(w) &= \{ s \in S | sw \notin D_{\emptyset} \} = \emptyset,\\
WA_{\emptyset}(w) &= \{ s \in S | sw \in D_{\emptyset} \ DJ \} = \{ s \in S | \text{$sw = wt$ for some $t \in J$} \}.
\end{align*}

Let $\Hecke_J$ be the Hecke algebra associated with the Coxeter system $(W_J, J)$. Let $M_\psi=\Hecke\otimes_{\Hecke_J}A_\psi$,
where $A_\psi$ is $A$ made into an $\Hecke_J$-module via the homomorphism $\psi: \Hecke_J\rightarrow A$
that satisfies $\psi(T_u)=q^{\ell(u)}$ for all $u\in W_J$, it is a $A$-free with basis $B=\{b_w\mid w\in D_J\}$ defined by $b_w=T_w\otimes 1$.
This corresponds to $M^J$ in~\cite{Deodhar} in the case $u=q$(we note that this is denoted by $\widetilde{M}^J$ in ~\cite{Deodhar2}).

Let $M_\phi=\Hecke\otimes_{\Hecke_J}A_\phi$,
where $A_\psi$ is $A$ made into an $\Hecke_J$-module via the homomorphism $\phi: \Hecke_J\rightarrow A$
that satisfies $\psi(T_u)=(-q)^{-\ell(u)}$ for all $u\in W_J$,
again it is a $A$-free with basis $B=\{b_w\mid w\in D_J\}$ defined by $b_w=T_w\otimes 1$. This corresponds to $M^J$ in~\cite{Deodhar} in the case $u=-1$(this is denoted by $M^J$ in ~\cite{Deodhar2}).

Our module $M(\textbf{E}_J, L)$ is now essentially reduced to be the module $M_\psi$,  while $\widetilde{M}(\textbf{E}_J, L)$ is reduced to be the module $M_\phi$, the only differences being due to our non-traditional definition of $\Hecke$.

In the case $D_J$ is a $W$-graph ideal with respect to $J$, the discussion is similar with the above. For more details see~\cite[Sect. 8]{Howlett}.

\end{document}